\newtheorem{theorem}{Theorem}
\newtheorem{algorithm}{Algorithm}
\newtheorem{corollary}{Corollary}
\newtheorem{definition}{Definition}
\newtheorem{example}{Example}
\newtheorem{lemma}{Lemma}
\newtheorem{proposition}{Proposition}
\newtheorem{remark}{Remark}
\newtheorem{assumption}{Assumption}
\newenvironment{proof}[1][Proof]{\noindent\textbf{#1.} }{\ \rule{0.5em}{0.5em}}
\begin{document}

\title{{A Youla Operator State-Space Framework for Stably Realizable
Distributed Control}}
\author{Mohammad Naghnaeian, Petros G. Voulgaris, and Nicola Elia \thanks{%
M. Naghnaeian is with the Mechanical Engineering Department, Clemson
University, Clemson, SC, USA \texttt{\small mnaghna@clemson.edu}} \thanks{%
P. G. Voulgaris is with the Aerospace Engineering Department and the
Coordinated Science Laboratory, University of Illinois, Urbana, IL, USA 
\texttt{\small voulgari@illinois.edu}, and Aerospace Engineering Department,
Khalifa University, Abu Dhabi, UAE \texttt{\small %
petros.voulgaris@kustar.ac.ae}} \thanks{%
N. Elia is with the Department of Electrical Engineering, Iowa State
University, Ames, IA 50010 USA \texttt{\small nelia@iastate.edu}} \thanks{%
This work is partially supported by NSF awards CMMI-1663460, ECCS-1739732,
CCF-1717154, CIF-1220643, and AFOSR AF FA-9550-15-1-0119}}
\maketitle

\begin{abstract}
This paper deals with the problem of distributed control synthesis. We seek
to find structured controllers that are stably realizable over the
underlying network. We address the problem using an operator form of
discrete-time linear systems. This allows for uniform treatment of various
classes of linear systems, e.g., Linear Time Invariant (LTI), Linear Time
Varying (LTV), or linear switched systems. We combine this operator
representation for linear systems with the classical Youla parameterization
to characterize the set of stably realizable controllers for a given network
structure. Using this Youla Operator State-Space (YOSS) framework, we show
that if the structure satisfies certain subspace like assumptions, then both
stability and performance problems can be formulated as convex optimization
and more precisely as tractable model-matching problems to any a priori
accuracy. Furthermore, we show that the structured controllers found from
our approach can be stably realized over the network and provide a
generalized separation principle.
\end{abstract}

\section{Introduction}

Modern large-scale cyber-physical systems are composed of many
interconnected subsystems that are usually spread over a large geographic
area and communicate over a network. Many difficulties arise when designing
a centralized controller for such systems due to communication delays, the
structure of the underlying communication network, scalability, etc. Due to
these issues, there has been a shift towards designing decentralized
controllers, in which subcontrollers are designed and implemented for each
subsystem and they can communicate over the network.

Decentralized, structured and distributed controller design has attracted
the renewed attention of many researchers over the last 15 years or so.
Several new developments occurred using state space methods (e.g., in the
LMI framework \cite{dullerud2004distributed, langbort2004distributed}) which
suit quadratic criteria but could generally lead to suboptimal solutions. On
the other hand, input-output approaches using the Youla-parametrization were
found to be very powerful in providing truly optimal solutions for several
classes of structured problems by reducing them to convex problems over the
Youla parameter, encompassing a variety of criteria, including nonquadratic
(e.g., \cite{voulgaris2001convex, qi2004structured, voulgaris2003optimal,
BassamSCL05, rotkTAC06},\cite{sabuau2016convex},\cite{sabuau2014youla}). %

In the input-output, or transfer function, domain, the stabilizing
controllers are parametrized by the so-called Youla parameter, and the
search for the optimal Youla parameter is carried out over the space of
stable systems. Here, the order of the Youla parameter or that of the
controller is not assumed a priori. However, unlike the state-space
approaches, the realizability of the controller over the underlying
communication network may become an issue, if not taken directly into
account as pointed out in \cite{vamsi-cdc10, vamsi2010sub}. That is,
although the controller transfer function structure is compatible with the
underlying network communication graph, it may lead to an internally
unstable realization, i.e., a non-minimal realization with unstable pole
zero cancellations (e.g., \cite{vikas-tac10}, \cite{lessard2013structured}
and \cite{vamsi2016optimal}). Certain alternative input-output approaches
have recently been proposed (e.g., a system level approach in \cite%
{wang2017system} and references therein) that hold the potential to handle
certain optimal and stably realizable structured design, by convex
programming without resorting to Youla-parametrization. A potential drawback
is the need to solve an exact model-matching problem, i.e., equations that,
if possible to satisfy, may require infinite support of the LTI maps
involved and hence stopping criteria for the approximation by finite support
may not be precisely characterized.

In this paper, we propose a unified way to synthesize stably realizable
controllers with respect to any measure of performance, e.g., $l_{1}$, $%
l_{2} $, or $l_{\infty }$ induced norms. Our approach is based on utilizing
a state-space based operator form of the system and combining it with the
ideas in the Youla-parameterization. This has been developed initially in
the context of switching system analysis and design in \cite%
{naghnaeian2016unified, naghnaeian2016characterization}, and as it turns
out, it fits well for optimally solving structured problems \cite%
{naghnaeian2018unified}. Our approach involves revisiting classical robust
control problems, e.g. $\mathcal{H}_{\infty }$ or $l_{1}$, and proposing a
new way to check for stability and to parametrize the set of stabilizing
controllers. The stability check that we will develop in this paper is in
the form of a model-matching problem 
\begin{equation*}
\inf_{R\text{ stable.}}\left\Vert T_{1}+T_{2}RT_{3}\right\Vert ,
\end{equation*}%
where $T_{1}$, $T_{2}$, and $T_{3}$ are stable systems. Such problems are
convex and efficient algorithms exist to solve them with arbitrary accuracy.
As such, we depart from standard stability tests, e.g., eigenvalue or
quadratic Lyapunov results. This will allow us to propose a new way to
parameterize the set of all stabilizing controllers, which is especially
beneficial to designing decentralized controllers with stable realization.

Another novelty of this approach is a new separation principle. Although the
standard separation principle does not generally hold true in the context of
decentralized control, this newly formulated separation principle holds
valid. In order to present this new separation principle, first, we
introduce the class of full-information-like controllers. These are
controllers that map both the state and measured output to control input.
That is,%
\begin{equation}
u=\left[ 
\begin{array}{cc}
K_{1} & K_{2}%
\end{array}%
\right] \left[ 
\begin{array}{c}
x \\ 
y%
\end{array}%
\right] ,  \label{eq:500}
\end{equation}%
where $u$, $x$, and $y$ are control input, states, and measured output,
respectively; $K_{1}$ and $K_{2}$ are linear systems. The proposed
separation principle states that any full-information-like controller (\ref%
{eq:500}) yields an output feedback controller if $x$ is replaced by its
estimation $\hat{x}$, which in turn is generated by an observer from the
measured output. Furthermore, although the reader can focus on the LTI case
as a concrete example, these methods are general and hold for LTV, delayed,
and switching systems as well.

\section{Preliminaries}

In this paper, $\mathbb{R}$ and $\mathbb{Z}$ denote the sets of real numbers
and integers, respectively. The set of $n$-tuples $x=\left\{ x\left(
k\right) \right\} _{k=0}^{n-1}$ where $x\left( k\right) $s are real numbers
is denoted by $\mathbb{R}^{n}$. For any $x\in \mathbb{R}^{n}$, its $%
l_{\infty }$ and $l_{p}$ norms are defined as $\left\Vert x\right\Vert
_{\infty }=\max_{k\in \left\{ 0,1,...,n-1\right\} }\left\vert x\left(
k\right) \right\vert $ and $\left\Vert x\right\Vert _{p}=\left(
\sum_{k=0}^{n-1}\left\vert x\left( k\right) \right\vert ^{p}\right) ^{\frac{1%
}{p}}$, respectively. Let $g=\left\{ g\left( k\right) \right\}
_{k=0}^{\infty }$ be a sequence where $g\left( k\right) \in \mathbb{R}^{n}$.
Then, the $l_{\infty }$ and $l_{p}$ norm of this sequence are defined as $%
\left\Vert g\right\Vert _{\infty }=\sup_{k\in \mathbb{Z}_{+}}\left\Vert
g\left( k\right) \right\Vert _{\infty }$ and $\left\Vert g\right\Vert
_{p}=\left( \sum_{k=0}^{\infty }\left\Vert g\left( k\right) \right\Vert
_{p}^{p}\right) ^{\frac{1}{p}}$ whenever they are finite. The set of $%
\mathbb{R}^{n}$-valued sequences whose $l_{p}$ norm ($l_{\infty }$ norm) is
finite is denoted by $l_{p}^{n}$ ($l_{\infty }^{n}$). Given two normed
spaces $\left( X,\left\Vert .\right\Vert _{X}\right) $ and $\left(
Y,\left\Vert .\right\Vert _{Y}\right) $ and a linear operator $%
T:X\rightarrow Y$, its$\ $induced norm is defined as $\left\Vert
T\right\Vert _{X-Y}:=\sup_{f\neq 0}\frac{\left\Vert Tf\right\Vert _{Y}}{%
\left\Vert f\right\Vert _{X}}$. Whenever both vector spaces are $X$ , we use
the notation $\left\Vert T\right\Vert $ without any subscript if the result
holds for any induced norm. We will call $T$ bounded or stable if $%
||T||<\infty $.

Any linear causal map $T$, on the space of all sequences ($l_{\infty ,e}$),
can be thought of as an infinite dimensional lower triangular matrix, 
\begin{equation}
T=\left[ 
\begin{array}{ccc}
T_{0,0} & 0 & \cdots  \\ 
T_{1,1} & T_{1,0} & \cdots  \\ 
\vdots  & \vdots  & \ddots 
\end{array}%
\right] .  \label{eq:matrix}
\end{equation}%
Such a causal map $T$ is called \textit{strictly causal} if its diagonal
elements are zeros, i.e., $T_{0,0}=T_{1,0}=...=0$. We say $T$ is a \textit{%
square operator} if it has the same number of inputs as outputs, that is, $%
T_{i,j}$ terms are square matrices. Given a sequence $g=\left\{ g\left(
k\right) \right\} _{k=0}^{\infty }$, the delay or shift operator $\Lambda $
is defined by $\Lambda ^{k}g=\left\{ \underset{k\text{ zeros}}{\underbrace{%
0,...,0}},g\left( 0\right) ,g\left( 1\right) ,...\right\} $, and, with a
slight abuse of notation, $\Lambda ^{-k}g=\left\{ g\left( k\right) ,g\left(
k+1\right) ,...\right\} $. It is easy to show that if $T$ is a causal map
then $\Lambda T$ and $T\Lambda $ are strictly causal. Conversely, any
strictly causal operator $T$ can be written as $T=\Lambda \bar{T}$ where $%
\bar{T}$ is causal. A linear causal map $T$ is called time-invariant if it
commutes with the delay operator, i.e. $\Lambda T=T\Lambda $. If $T$ is a
Linear Time-Invariant (LTI), it is fully characterize by its impulse
response denoted by $\left\{ T\left( k\right) \right\} _{k=0}^{\infty }$. In
this case, its infinite dimensional matrix representation is given by%
\begin{equation*}
T=\left[ 
\begin{array}{ccc}
T\left( 0\right)  & 0 & \cdots  \\ 
T\left( 1\right)  & T\left( 0\right)  & \cdots  \\ 
\vdots  & \vdots  & \ddots 
\end{array}%
\right] .
\end{equation*}%
A finite dimensional LTI system has the state-space representation of 
\begin{equation}
G:\left\{ 
\begin{array}{c}
x\left( t+1\right) =Ax\left( t\right) +Bu\left( t\right)  \\ 
y\left( t\right) =Cx\left( t\right) +Du\left( t\right) 
\end{array}%
\right. ,\text{ with }x\left( t_{0}\right) =x_{0},  \label{eq:01'}
\end{equation}%
where $u\left( t\right) \in \mathbb{R}^{m},x\left( t\right) \in \mathbb{R}%
^{n},$ $y\left( t\right) \in \mathbb{R}^{p}$, and $x_{0}\in \mathbb{R}^{n}$
are input, state, output, and the initial condition of the system and $A$, $B
$, $C$, and $D$ are matrices with appropriate dimensions for all $t\in 
\mathbb{Z}_{+}$. Given a matrix $S$, we define $\hat{S}$ to be the diagonal
operator%
\begin{equation}
\hat{S}=\left[ 
\begin{array}{ccc}
S & 0 & \cdots  \\ 
0 & S &  \\ 
\vdots  &  & \ddots 
\end{array}%
\right] .  \label{eq:diag}
\end{equation}%
Using this notation, we can define diagonal operators $\hat{A}$, $\hat{B}$, $%
\hat{C}$, and $\hat{D}$ and rewrite (\ref{eq:01'}) as

\begin{equation}
G:\left\{ 
\begin{array}{c}
x=\Lambda \hat{A}x+\Lambda \hat{B}u+\bar{x}_{0} \\ 
y=\hat{C}x+\hat{D}u%
\end{array}%
\right. ,  \label{eq:01''}
\end{equation}%
where $\bar{x}_{0}=\left\{ \underset{t_{0}\text{ zeros}}{\underbrace{0,...,0}%
},x_{0},0,0,...\right\} $, $x=\left\{ x\left( t\right) \right\}
_{t=0}^{\infty }$, $y=\left\{ y\left( t\right) \right\} _{t=0}^{\infty }$, $%
u=\left\{ u\left( t\right) \right\} _{t=0}^{\infty }$, and $\Lambda $ is the
delay operator. The above representation of $G$ is referred to as a
realization in operator form. One can also write an operator realization for
the time delay systems. Consider the system given by%
\begin{equation}
H:\left\{ 
\begin{array}{c}
x\left( t+1\right) =\sum_{i=0}^{N}A_{i}x\left( t-i\right)
+\sum_{i=0}^{N}B_{i}u\left( t-i\right) \\ 
y\left( t\right) =\sum_{i=0}^{N}C_{i}x\left( t-i\right)
+\sum_{i=0}^{N}D_{i}u\left( t-i\right)%
\end{array}%
\right. ,  \label{eq:02'}
\end{equation}%
with initial condition $x_{0}=\left\{ x\left( k\right) \right\} _{k=-N}^{0}$%
. Define $\bar{A}=\sum_{i=0}^{N}\Lambda ^{i}\hat{A}_{i}$. Similarly, we
define $\bar{B},\bar{C}$, and $\bar{D}$. Then, the time-delay system can be
written in the operator form as%
\begin{equation}
H:\left\{ 
\begin{array}{c}
x=\Lambda \bar{A}x+\Lambda \bar{B}u+\bar{x}_{0} \\ 
y=\bar{C}x+\bar{D}u%
\end{array}%
\right. .  \label{eq:02}
\end{equation}%
In this operator framework, we make a distinction between an operator and
its realization as follows:

\begin{definition}[Operator Realization]
For a given linear (possibly unbounded) causal operator $T:u\rightarrow y$,
we will refer to the relationship%
\begin{equation*}
T:\left\{ 
\begin{array}{c}
x=\mathcal{A}_{T}x+\mathcal{B}_{T}u \\ 
y=\mathcal{C}_{T}x+\mathcal{D}_{T}u%
\end{array}%
\right. ,
\end{equation*}%
as an operator realization or simply a realization only if operators $%
\mathcal{A}_{T}$, $\mathcal{B}_{T}$, $\mathcal{C}_{T}$, and $\mathcal{D}_{T}$
are bounded causal operators and $\left( I-\mathcal{A}_{T}\right) ^{-1}$
exists.
\end{definition}

For example, realizations for LTI operators (\ref{eq:01'}) and delayed
systems (\ref{eq:02'}) are given in (\ref{eq:01''}) and (\ref{eq:02}),
respectively. Also, any bounded operator $T$ has a trivial realization with $%
\mathcal{A}_{T}=\mathcal{B}_{T}=\mathcal{C}_{T}=0$ and $\mathcal{D}_{T}=T$.
This realization, however, is not trivial for unstable operator $T$.

Throughout this paper, we prefer to write the systems in the operator form (%
\ref{eq:02}) as it allows for treating various classes of systems (e.g.
time-delay, switching, and LTV systems \cite{naghnaeian2016unified}) in a
unified way. Henceforth, we consider the systems that have operator forms as
in (\ref{eq:02}). Such a system can be seen as a mapping from $\bar{x}_{0}$
and $u$ to $x$ and $y$. For this system, we adopt the following definitions
of stability and gain.

\begin{definition}
\label{def:stab}Given two normed spaces $\left( \mathcal{U},\left\Vert
.\right\Vert _{\mathcal{U}}\right) $ and $\left( \mathcal{X},\left\Vert
.\right\Vert _{\mathcal{X}}\right) $, we say that the system $H$ in (\ref%
{eq:02}) is $\mathcal{U}$ to $\mathcal{X}$ stable if it is a bounded
operator from $\left[ \bar{x}_{0}^{T},u^{T}\right] ^{T}\in \mathcal{X\times U%
}$ to $\left[ x^{T},y^{T}\right] \in \mathcal{X\times X}$. More precisely, $%
H $ is $\mathcal{U}$ to $\mathcal{X}$ stable if, for some $\gamma
_{1},\gamma _{2}\geq 0$, $\left\Vert x\right\Vert _{\mathcal{X}}\leq \gamma
_{1}\left\Vert \bar{x}_{0}\right\Vert _{\mathcal{X}}+\gamma _{2}\left\Vert
u\right\Vert _{\mathcal{U}}$ and $\left\Vert y\right\Vert _{\mathcal{X}}\leq
\gamma _{1}\left\Vert \bar{x}_{0}\right\Vert _{\mathcal{X}}+\gamma
_{2}\left\Vert u\right\Vert _{\mathcal{U}}$ whenever $\left\Vert \bar{x}%
_{0}\right\Vert _{\mathcal{X}}$ and $\left\Vert u\right\Vert _{\mathcal{U}}$
are finite.
\end{definition}

\begin{definition}
\label{def:gain}Given two normed spaces $\left( \mathcal{U},\left\Vert
.\right\Vert _{\mathcal{U}}\right) $ and $\left( \mathcal{X},\left\Vert
.\right\Vert _{\mathcal{X}}\right) $, and a $\mathcal{U}$ to $\mathcal{X}$
stable system $H$, its gain is defined as $\left\Vert H\right\Vert _{%
\mathcal{U}-\mathcal{X}}=\sup_{\substack{ u\neq 0  \\ x_{0}=0}}\frac{%
\left\Vert y\right\Vert _{\mathcal{X}}}{\left\Vert u\right\Vert _{\mathcal{U}%
}}$.
\end{definition}

For simplicity, we let $\mathcal{X}$ and $\mathcal{U}$ to be the same (but
possibly with different dimension) $l_{p}$ spaces.

Finally, when need to ensure the invertibility of certain operators on $%
l_{\infty ,e}$, we will appeal to the following lemma:

\begin{lemma}
\label{lemma:inverse}The following hold:

\begin{enumerate}
\item Given a causal square operator $T$ as in (\ref{eq:matrix}), the
inverse $\left( I-T\right) ^{-1}$exists if $T_{i,0}$ is invertible for all $%
i=0,1,2,...$.

\item Given a partitioned squared operator $X=\left[ 
\begin{array}{cc}
X_{11} & X_{12} \\ 
X_{21} & X_{22}%
\end{array}%
\right] $, $\left( I-X\right) ^{-1}$ exists if $\left( I-X_{11}\right) $ and 
$\left( I-X_{22}\right) $ are invertible.
\end{enumerate}
\end{lemma}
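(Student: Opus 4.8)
The plan is to prove part~1 directly by forward substitution and then obtain part~2 from it by a block (Schur complement) elimination.

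For part~1, view $I-T$ as an infinite block lower-triangular array acting on $l_{\infty,e}$. Reading off the $n$-th block-row of $(I-T)x=y$ in the convention of (\ref{eq:matrix}) gives $(I-T_{n,0})\,x(n)=y(n)+\sum_{k\ge 1}T_{n,k}\,x(n-k)$, where the sum only involves the already-known values $x(0),\dots,x(n-1)$. Since the $n$-th diagonal block $I-T_{n,0}$ is invertible, this recursion determines $x(n)$ from $y(0),\dots,y(n)$; iterating over $n$ defines a linear map $R:y\mapsto x$ that is causal by construction. By the way the recursion was set up $(I-T)R=I$; and since $(I-T)x=0$ forces $x=0$ block by block (the same recursion with $y=0$), $R$ is also a left inverse, hence $R=(I-T)^{-1}$. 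The only care needed is that everything is interpreted at the level of causal operators on the extended space $l_{\infty,e}$.

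For part~2, since $(I-X_{11})$ is invertible I would use the factorization
\[
I-X=\begin{bmatrix} I & 0\\ -X_{21}(I-X_{11})^{-1} & I\end{bmatrix}\begin{bmatrix} I-X_{11} & -X_{12}\\ 0 & S\end{bmatrix},\qquad S:=(I-X_{22})-X_{21}(I-X_{11})^{-1}X_{12},
\]
in which all factors are causal. The unit block-triangular factor is invertible (negate its off-diagonal block), and $I-X_{11}$ is invertible by hypothesis, so it suffices to invert the Schur complement $S$. For that I would apply part~1 to $S$: $S$ is a causal square operator whose diagonal blocks equal those of $I-X_{22}$ plus the diagonal blocks of the coupling term $X_{21}(I-X_{11})^{-1}X_{12}$, and the latter vanish in the settings where the lemma is used, because there at least one of the cross maps (for the loop operators built from realizations of the form (\ref{eq:02}), the one carrying the state/measurement coupling) is strictly causal. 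Hence the diagonal blocks of $S$ coincide with those of $I-X_{22}$, which are invertible because $(I-X_{22})^{-1}$ exists, and part~1 yields $S^{-1}$. Composing the three inverses gives $(I-X)^{-1}$.

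The step I expect to be the real obstacle is precisely the invertibility of $S$: block elimination alone only transfers the problem from $I-X$ to $S$, and $S$ need not be invertible for a completely arbitrary causal partitioned $X$ (small constant counterexamples exist), so one must exploit structure — strict causality of a coupling block as above, or an outright triangular structure ($X_{21}=0$ or $X_{12}=0$), in which case $S=I-X_{22}$ and nothing more is needed. Everything else — the factorization identity, causality of each factor, and assembling the inverse — is routine verification.
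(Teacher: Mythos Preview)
The paper does not supply its own proof here; it simply refers to \cite{rotkTAC06}. Your forward-substitution argument for part~1 is the standard one and is correct. Note that you have tacitly (and correctly) read the hypothesis as ``$I-T_{i,0}$ invertible'' rather than ``$T_{i,0}$ invertible''; the latter is a typo in the statement (take $T=I$ for a counterexample).

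Your handling of part~2 is more interesting than a routine verification, because you have spotted a genuine issue with the statement: as written it is false. With $X_{11}=X_{22}=0$ and $X_{12}=X_{21}=I$, both $I-X_{11}$ and $I-X_{22}$ are invertible while $I-X$ is singular. Your Schur-complement factorization correctly isolates the obstruction in the invertibility of $S=(I-X_{22})-X_{21}(I-X_{11})^{-1}X_{12}$, which does not follow from the stated hypotheses alone. The fix you propose --- that in the paper's applications one of $X_{12},X_{21}$ carries a factor of $\Lambda$ and is therefore strictly causal, so the diagonal blocks of $S$ agree with those of $I-X_{22}$ and part~1 applies --- is exactly right: every invocation of this lemma in the paper (e.g.\ for the closed-loop matrix in (\ref{Q0})) has this structure. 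So your proof is correct for the lemma as it is actually used, and your remark about the missing hypothesis is a legitimate correction to the printed statement rather than a gap in your argument.
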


\section{Basic Setup}

A standard practice for designing a distributed controller for subsystems
communicating over a given network is to aggregate all subsystems into one
system $P$ and design a controller for this system. The controller must be
designed in a way so that it can be implemented as subcontrollers
communicating over the given network. We consider the aggregate system with
a realization%
\begin{equation}
P:\left\{ 
\begin{array}{c}
x=\Lambda \bar{A}x+\Lambda \bar{B}_{1}w+\Lambda \bar{B}_{2}u+\bar{x}_{0} \\ 
z=\bar{C}_{1}x+\bar{D}_{11}w+\bar{D}_{12}u \\ 
y=\bar{C}_{2}x+\bar{D}_{12}w,%
\end{array}%
\right. ,  \label{eq:general}
\end{equation}%
where $x,y,$ and $z$ are the states, measurements, and the regulated output; 
$w$ and $u$ are the exogenous and control inputs; and, $\bar{A},\bar{B}_{i},%
\bar{C}_{j},\bar{D}_{ij}$, for $i.j\in \left\{ 1,2\right\} $ are bounded
operators.

\begin{example}
\label{ex:01}Consider a network with $N$ subsystems. Each subsystem is given
by%
\begin{eqnarray}
x_{i}\left( t+1\right) &=&A^{i}x_{i}\left( t\right) +B_{1}^{i}w_{i}\left(
t\right) +B_{2}^{i}u_{i}\left( t\right) +\sum_{j=1}^{N}B_{3}^{ij}\eta
_{ij}\left( t\right) ,  \notag \\
z_{i}\left( t\right) &=&C_{1}^{i}x_{i}\left( t\right) +D_{11}^{i}w_{i}\left(
t\right) +D_{12}^{i}u_{i}\left( t\right) ,  \notag \\
y_{i}\left( t\right) &=&C_{2}^{i}x_{i}\left( t\right) +D_{21}^{i}w_{i}\left(
t\right) ,  \notag \\
\nu _{ji}\left( t\right) &=&C_{3}^{ji}x_{i}\left( t\right)
+D_{31}^{ji}w_{i}\left( t\right) ,\text{ }j=1,2,...,N,  \label{eq:21}
\end{eqnarray}%
where $x_{i}$, $y_{i}$, and $z_{i}$ are the states, measured output, and
regulated output of the $i^{th}$ subsystem; $\nu _{ji}$ is the signal that
the $i^{th}$ subsystem communicates to the $j^{th}$ subsystem and $\eta
_{ij} $ is the signal that $i^{th}$ subsystem receives through its
communication link with the $j^{th}$ subsystem. We let $%
B_{2}^{ij}=C_{3}^{ji}=D_{31}^{ji}=0 $ if there is no communication link
between $i^{th}$ and $j^{th}$ subsystems. Furthermore, due to the delay in
the communication links, we set%
\begin{equation}
\eta _{ij}\left( t\right) =\nu _{ij}\left( t-\tau _{ij}\right) ,
\label{eq:20}
\end{equation}%
where $\tau _{ij}\in \mathbb{Z}_{\geq 0}$ is delay in communication from $%
j^{th}$ to $i^{th}$ subsystem. Substituting (\ref{eq:20}) in (\ref{eq:21}),
the $i^{th}$ subsystem, in the operator form, can be written as%
\begin{eqnarray*}
x_{i} &=&\Lambda \left[ \hat{A}^{i}x_{i}+\sum_{j=1}^{3}\Lambda ^{\tau _{ij}}%
\hat{A}^{ij}x_{j}\right] \\
&&+\Lambda \left[ \hat{B}_{1}^{i}w_{i}+\sum_{j=1}^{3}\Lambda ^{ij}\hat{B}%
_{1}^{ij}w_{j}\right] +\Lambda \hat{B}_{2}^{i}u_{i},
\end{eqnarray*}%
where $\hat{A}^{ij}=\hat{B}_{3}^{ij}\hat{C}_{3}^{ij}$ and $\hat{B}_{1}^{ij}=%
\hat{B}_{3}^{ij}\hat{D}_{31}^{ij}$. Based on the above expression, it can be
easily seen that for properly defined operators $\bar{A},\bar{B}_{i},\bar{C}%
_{i},\bar{D}_{ij}$, for $i\in \left\{ 1,2\right\} $, the aggregate system
can be written as in (\ref{eq:general}).
\end{example}

The structure of the network is reflected in the coefficient operators
involved in (\ref{eq:general}), e.g., as sparsity patterns \cite%
{vamsi2016optimal}. Given a fixed network consisting of $N$ nodes
(subsystems) and a set of $N$ inputs $\xi =[\xi _{i}]$ to, and $N$ outputs $%
\zeta =[\zeta _{i}]$ from, these $N$ nodes, let $\mathcal{S}$ denote the set
of all input-output maps (or, transfer functions in the LTI case) $T$ from $%
\zeta $ to $\xi $, i.e., $\xi =T\zeta $ that can be obtained from this
network. That is, the input-output aggregation of all subsystem (or,
subcontroller) dynamics, interconnected via the network, form an element $T$
in $\mathcal{S}$ and, conversely, any element in $\mathcal{S}$ can be
implemented, stably or unstably (to be precisely defined later), as
subsystems communicating over the given network. Consider the following
example:

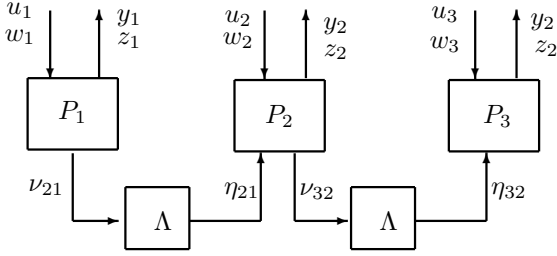
\begin{figure}[tbp] \centering%
\unitlength 0.8 mm
\begin{picture}(96.25,50)(0,40)
\linethickness{0.3mm}
\put(10,73.12){\line(1,0){15}}
\put(10,61.25){\line(0,1){11.88}}
\put(25,61.25){\line(0,1){11.88}}
\put(10,61.25){\line(1,0){15}}
\linethickness{0.3mm}
\put(44.38,72.81){\line(1,0){15}}
\put(44.38,60.94){\line(0,1){11.88}}
\put(59.38,60.94){\line(0,1){11.88}}
\put(44.38,60.94){\line(1,0){15}}
\linethickness{0.3mm}
\put(80,72.81){\line(1,0){15}}
\put(80,60.94){\line(0,1){11.87}}
\put(95,60.94){\line(0,1){11.87}}
\put(80,60.94){\line(1,0){15}}
\linethickness{0.3mm}
\put(13.75,73.12){\line(0,1){11.25}}
\put(13.75,73.12){\vector(0,-1){0.12}}
\linethickness{0.3mm}
\put(21.88,73.12){\line(0,1){11.25}}
\put(21.88,84.38){\vector(0,1){0.12}}
\put(17.5,67.5){\makebox(0,0)[cc]{$P_{1}$}}

\linethickness{0.3mm}
\put(17.5,49.38){\line(0,1){11.25}}
\linethickness{0.3mm}
\put(17.5,49.38){\line(1,0){7.5}}
\put(25,49.38){\vector(1,0){0.12}}
\linethickness{0.3mm}
\put(48.75,49.38){\line(0,1){11.25}}
\put(48.75,60.62){\vector(0,1){0.12}}
\linethickness{0.3mm}
\put(26.25,55){\line(1,0){10.62}}
\put(26.25,44.69){\line(0,1){10.31}}
\put(36.88,44.69){\line(0,1){10.31}}
\put(26.25,44.69){\line(1,0){10.62}}
\linethickness{0.3mm}
\put(36.88,49.38){\line(1,0){11.88}}
\put(51.88,66.88){\makebox(0,0)[cc]{$P_{2}$}}

\put(88.12,66.88){\makebox(0,0)[cc]{$P_{3}$}}

\linethickness{0.3mm}
\put(63.75,55){\line(1,0){10.62}}
\put(63.75,44.69){\line(0,1){10.31}}
\put(74.38,44.69){\line(0,1){10.31}}
\put(63.75,44.69){\line(1,0){10.62}}
\linethickness{0.3mm}
\put(54.38,49.38){\line(0,1){11.25}}
\linethickness{0.3mm}
\put(54.38,49.38){\line(1,0){8.75}}
\put(63.12,49.38){\vector(1,0){0.12}}
\linethickness{0.3mm}
\put(74.38,49.38){\line(1,0){11.88}}
\linethickness{0.3mm}
\put(86.25,49.38){\line(0,1){11.25}}
\put(86.25,60.62){\vector(0,1){0.12}}
\put(32.5,49.38){\makebox(0,0)[cc]{$\Lambda$}}

\put(70,49.38){\makebox(0,0)[cc]{$\Lambda$}}

\linethickness{0.3mm}
\put(49.38,73.12){\line(0,1){11.25}}
\put(49.38,73.12){\vector(0,-1){0.12}}
\linethickness{0.3mm}
\put(84.38,73.12){\line(0,1){11.25}}
\put(84.38,73.12){\vector(0,-1){0.12}}
\linethickness{0.3mm}
\put(56.25,73.12){\line(0,1){11.25}}
\put(56.25,84.38){\vector(0,1){0.12}}
\linethickness{0.3mm}
\put(91.25,73.12){\line(0,1){11.25}}
\put(91.25,84.38){\vector(0,1){0.12}}
\put(8.75,84.38){\makebox(0,0)[cc]{$u_{1}$}}

\put(45,83.12){\makebox(0,0)[cc]{$u_{2}$}}

\put(79.38,83.75){\makebox(0,0)[cc]{$u_{3}$}}

\put(26.88,83.12){\makebox(0,0)[cc]{$y_{1}$}}

\put(8.75,80){\makebox(0,0)[cc]{$w_{1}$}}

\put(26.88,79.38){\makebox(0,0)[cc]{$z_{1}$}}

\put(45,79.38){\makebox(0,0)[cc]{$w_{2}$}}

\put(79.38,78.75){\makebox(0,0)[cc]{$w_{3}$}}

\put(61.25,81.88){\makebox(0,0)[cc]{$y_{2}$}}

\put(95.62,82.5){\makebox(0,0)[cc]{$y_{2}$}}

\put(61.25,77.5){\makebox(0,0)[cc]{$z_{2}$}}

\put(96.25,78.12){\makebox(0,0)[cc]{$z_{2}$}}

\put(13.12,55){\makebox(0,0)[cc]{$\nu_{21}$}}

\put(45.62,54.38){\makebox(0,0)[cc]{$\eta_{21}$}}

\put(58.12,54.38){\makebox(0,0)[cc]{$\nu_{32}$}}

\put(90,54.38){\makebox(0,0)[cc]{$\eta_{32}$}}

\end{picture}
%
\caption{A simple nested network.}\label{fig:02}%
\end{figure}%

\begin{figure*}[tbp]
\centering\centering
\par
\includegraphics[trim= 0in 3.2in 0in 0.9in,clip,height=1.5in]{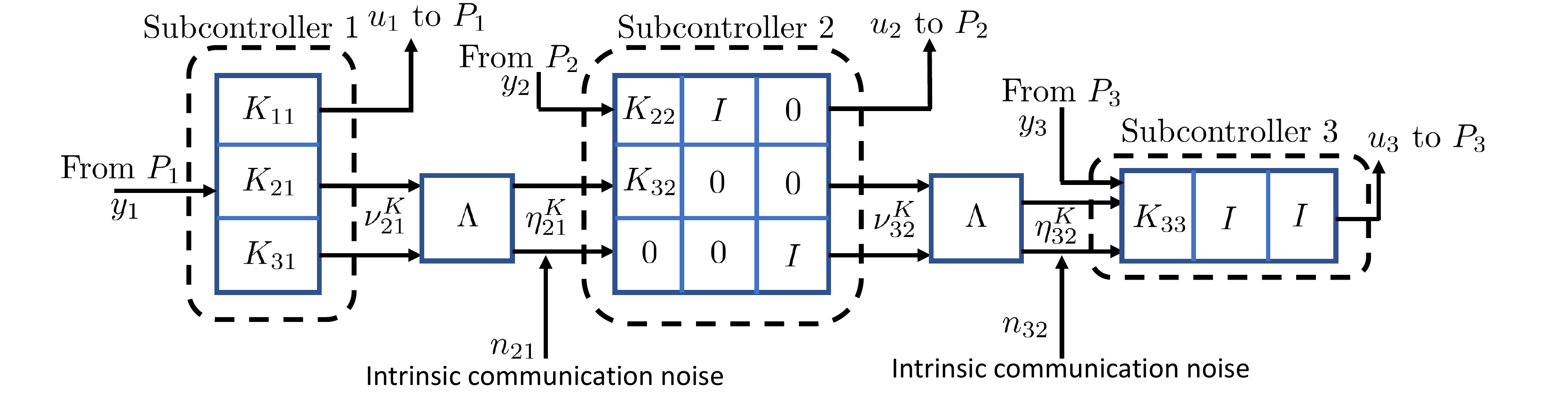}
\caption{Controller in Example \protect\ref{exam:03} implemented over the
network.}
\label{fig:03}
\end{figure*}

\begin{example}
\label{exam:02}\textbf{Nested network: }An example of a nested network is
given in Figure \ref{fig:02}. We adopt the notations introduced in Example %
\ref{ex:01}. It can be easily verified that the aggregate system is given by
(\ref{eq:general}) where $\bar{A}=\left\{ \bar{A}\left( 0\right) ,\bar{A}%
\left( 1\right) ,0,0,...\right\} $, $\bar{B}_{1}=\left\{ \bar{B}_{1}\left(
0\right) ,\bar{B}_{1}\left( 1\right) ,0,...\right\} $, $\bar{B}_{2}=\left\{ 
\bar{B}_{2}\left( 0\right) ,0,...\right\} $, $\bar{C}_{j}=\left\{ \bar{C}%
_{j}\left( 0\right) ,0,...\right\} $, $\bar{D}_{ij}=\left\{ \bar{D}%
_{ij}\left( 0\right) ,0,...\right\} $ with $\bar{A}\left( 0\right)
=diag\left\{ A^{1},A^{2},A^{3}\right\} $, $\bar{B}_{j}\left( 0\right)
=diag\left\{ B_{j}^{1},B_{j}^{2},B_{j}^{3}\right\} $, $\bar{C}_{j}\left(
0\right) =diag\left\{ C_{j}^{1},C_{j}^{2},C_{j}^{3}\right\} $, $\bar{D}%
_{ij}\left( 0\right) =diag\left\{ D_{ij}^{1},D_{ij}^{2},D_{ij}^{3}\right\} $%
, and for $i,j\in \left\{ 1,2\right\} $ 
\begin{eqnarray*}
\bar{A}\left( 1\right) &=&\left[ 
\begin{array}{ccc}
0 & 0 & 0 \\ 
B_{3}^{21}C_{3}^{21} & 0 & 0 \\ 
0 & B_{3}^{32}C_{3}^{32} & 0%
\end{array}%
\right] , \\
\bar{B}_{1}\left( 1\right) &=&\left[ 
\begin{array}{ccc}
0 & 0 & 0 \\ 
B_{3}^{21}D_{31}^{21} & 0 & 0 \\ 
0 & B_{3}^{32}D_{31}^{32} & 0%
\end{array}%
\right] .
\end{eqnarray*}%
In this example, the structure of the network is reflected on the impulse
response of the coefficient operators, e.g. $\bar{A}$. The terms in the
impulse response of $\bar{A}$ are lower triangular, which conforms with the
flow of communication from subsystem $1$ to subsystem $2$ and then to
subsystem $3$. And the sparsity structure in $\bar{A}\left( 0\right) $ and $%
\bar{A}\left( 1\right) $ is because each subsystem has immediate access to
its own measurement signal but communicates with its neighbors with a delay.
For this network, the set $\mathcal{S}$ is the space of all systems $P$
whose impulse response $\left\{ P\left( k\right) \right\} _{k=0}^{\infty }$
satisfies the following conditions: $P\left( k\right) $ is lower triangular
for $k=2,3,...$, $P\left( 0\right) $ is diagonal, and 
\begin{equation*}
P\left( 1\right) =\left[ 
\begin{array}{ccc}
\ast & 0 & 0 \\ 
\ast & \ast & 0 \\ 
0 & \ast & \ast%
\end{array}%
\right] ,
\end{equation*}%
where $\ast $ stands for a possibly non zero entry. Or, in transfer function
terms, 
\begin{equation*}
P\left[ \lambda \right] =\left[ 
\begin{array}{ccc}
\ast & 0 & 0 \\ 
\lambda \ast & \ast & 0 \\ 
\lambda ^{2}\ast & \lambda \ast & \ast%
\end{array}%
\right] ,
\end{equation*}%
where $P[\lambda ]=\sum_{k=0}^{\infty }\lambda ^{k}P(k)$ is the $\lambda $%
-transform. Accordingly, if $K$ is a controller for $P$ within the same
communication network, $K[\lambda ]$ should also be of the same form, i.e., $%
K\in \mathcal{S}$.
\end{example}

\bigskip

In the sequel, in generating the control input $u$, depending on what
information is available, we face two categories of problems,
full-information or output feedback. In full-information feedback, the
controller has access to the entire state and measured outputs, i.e., 
\begin{equation}
u=\left[ 
\begin{array}{cc}
K_{1} & K_{2}%
\end{array}%
\right] \left[ 
\begin{array}{c}
x \\ 
y%
\end{array}%
\right] .  \label{eq:FI}
\end{equation}%
In output feedback, however, $u$ must be generated only using the
information available in signal $y$, i.e. 
\begin{equation*}
u=\left[ 
\begin{array}{cc}
0 & K_{2}%
\end{array}%
\right] \left[ 
\begin{array}{c}
x \\ 
y%
\end{array}%
\right] .
\end{equation*}

\begin{remark}
We need to point out that there is a difference between the full-information
controller that we defined in (\ref{eq:FI}) and the one defined in \cite%
{zhou1996robust}. In the latter, the controller has access to the state $x$
and the disturbances $w$ (and consequently $y$) while the full-information
controller defined in (\ref{eq:FI}) only has access to $x$ and $y$. The two
definitions are equivalent if $\bar{D}_{21}$ is a square and invertible map.
\end{remark}

The generalized plant%
\begin{eqnarray}
\left[ 
\begin{array}{c}
x \\ 
y%
\end{array}%
\right] &=&\left[ 
\begin{array}{cc}
\Lambda \bar{A} & 0 \\ 
\bar{C}_{2} & 0%
\end{array}%
\right] \left[ 
\begin{array}{c}
x \\ 
y%
\end{array}%
\right] +\left[ 
\begin{array}{c}
\Lambda \bar{B}_{1}w+\Lambda \bar{B}_{2}u+\bar{x}_{0} \\ 
\bar{D}_{21}w%
\end{array}%
\right] ,  \notag \\
z &=&\left[ 
\begin{array}{cc}
\bar{C}_{1} & 0%
\end{array}%
\right] \left[ 
\begin{array}{c}
x \\ 
y%
\end{array}%
\right] +\bar{D}_{11}w+\bar{D}_{12}u,  \label{eq:plant_general}
\end{eqnarray}%
with the full-information feedback controller (or output feedback when $%
K_{1}=0$) results in the closed-loop system%
\begin{eqnarray}
&&\left[ 
\begin{array}{c}
x \\ 
y%
\end{array}%
\right] =\left[ 
\begin{array}{cc}
I-\Lambda \bar{A}-\Lambda \bar{B}_{2}K_{1} & -\Lambda \bar{B}_{2}K_{2} \\ 
-\bar{C}_{2} & I%
\end{array}%
\right] ^{-1}\times  \notag \\
&&\qquad \qquad \qquad \qquad \left\{ \left[ 
\begin{array}{c}
\Lambda \bar{B}_{1} \\ 
\bar{D}_{21}%
\end{array}%
\right] w+\left[ 
\begin{array}{c}
I \\ 
0%
\end{array}%
\right] \bar{x}_{0}\right\} ,  \notag \\
&&z=\left[ 
\begin{array}{cc}
\bar{C}_{1} & 0%
\end{array}%
\right] \left[ 
\begin{array}{c}
x \\ 
y%
\end{array}%
\right] +\bar{D}_{11}w+\bar{D}_{12}u,  \notag \\
&&u=\left[ 
\begin{array}{cc}
K_{1} & K_{2}%
\end{array}%
\right] \left[ 
\begin{array}{c}
x \\ 
y%
\end{array}%
\right] .  \label{eq:cl'}
\end{eqnarray}

This closed-loop system can be thought of as a linear operator from $\left( 
\begin{array}{c}
\bar{x}_{0} \\ 
w%
\end{array}%
\right) $ to signals $x$, $y$, $z$, and $u$. In conjunction with Definition %
\ref{def:stab}, we adopt the following definition for centralized
stabilizing controllers

\begin{definition}[Centralized Stabilizing Structured Controller]
\label{def:stab_central}A full-information or output feedback structured
controller $K=\left[ 
\begin{array}{cc}
K_{1} & K_{2}%
\end{array}%
\right] \in \mathcal{S}^{1\times 2}$ is said to be stabilizing in the
centralized way if the closed loop system (\ref{eq:cl'}) is a bounded
operator from $\bar{x}_{0}$ and $w$ to $x$, $y$, $z$, and $u$.
\end{definition}

It is important to note that an implementation of a controller in a
distributed way over a network is carried out through implementing a
realization of such controller that conforms with the network structure.
However, unless $K\in \mathcal{S}$ is a bounded operator itself, there is no
apriori guarantee that it has an operator realization conforming with the
structure of the network. In the next subsection, we will discuss this issue
further.

\subsection{Stable realizability/implementability}

The set $\mathcal{S}$ is fully characterized by the underlying network. In
this paper, given a (stabilizable and detectable in the usual sense)
generalized plant $P=\left[ 
\begin{array}{cc}
P_{zw} & P_{zu} \\ 
P_{yw} & P_{yu}%
\end{array}%
\right] $ as in (\ref{eq:general}), we are interested in finding the
controllers $K\in \mathcal{S}$ that are also stably realizable over the
network. We should point out that even if $K$ belongs to $\mathcal{S}$ and
stabilizes $P$ in the usual centralized sense (Definition \ref%
{def:stab_central}), i.e., if%
\begin{equation}
\left[ 
\begin{array}{cc}
I & P_{yu} \\ 
K & I%
\end{array}%
\right]  \label{central-stab}
\end{equation}%
has a stable inverse, it does not mean that the controller $K$ can
automatically be realized stably, although we can implement it as the
interconnection of subcontrollers consistent with the network. Unless we
guarantee that the implementation of $K$ does not have internal hidden
unstable modes, the closed-loop may not be stable. This is because a
stabilizing controller in a centralized sense, by design, guarantees the
boundedness (stability) of the measured output and control input, $y$ and $u$%
. Also, under the detectability assumption of the plant, the boundedness of $%
y$ and $u$ translates to that of $x$ and $z$ in (\ref{eq:general}). However,
when an aggregate controller is implemented in a decentralized way, new
signals are introduced and should taken into consideration. In particular,
these are the signals travelling between subcontrollers and also the
intrinsic noise on such signals. Therefore, in designing a controller that
is implementable over the network in a stable way, one needs to identify the
subcontrollers' communication signals and guarantee that the effects of
their intrinsic noise on every other signal of the system is bounded.

\begin{example}
\label{exam:03}Consider the nested network in Example \ref{exam:02} and let $%
K\in \mathcal{S}$ be a stabilizing controller in the centralized sense as in
(\ref{central-stab}). Since $K=\left\{ K\left( k\right) \right\}
_{k=0}^{\infty }\in \mathcal{S}$, it can be partitioned as $K=\left[ 
\begin{array}{ccc}
K_{11} & 0 & 0 \\ 
\Lambda K_{21} & K_{22} & 0 \\ 
\Lambda ^{2}K_{31} & \Lambda K_{32} & K_{33}%
\end{array}%
\right] $. One way to implement this controller over the network is
illustrated in Figure \ref{fig:03}. By definition, the signals $y$, $z$, and 
$u$ are bounded when $w$ and $\bar{x}_{0}$ are bounded. However, as
mentioned above, it is not guaranteed that the signals travelling between
the subcontrollers, i.e. $\nu _{21}^{K}$ and $\nu _{32}^{K}$, are bounded.
Furthermore, it is not clear if small bounded noise signals, shown as $n_{21}
$ and $n_{32}$ in the figure, corrupt the subcontrollers' communications,
the stability of the system, i.e., the boundedness other signals is
preserved. of Therefore, special attention should be paid to the stable
implementability of structured controllers.
\end{example}

Accordingly, we define the set $\mathcal{S}_{K}\subseteq \mathcal{S}$ to be
the set of stabilizing controllers $K$ that can be stably implemented over
the network without loosing stability, i.e., the subcontrollers communicate
bounded signals provided that the measured outputs, $y$, and control inputs, 
$u$, are bounded. We will make the definition of $\mathcal{S}_{K}$ concrete
in what follows.

\begin{definition}[Stably Realizable Controller ]
\label{def:stab_ctrl}Given a structured aggregate controller $K=\left[ 
\begin{array}{cc}
K_{1} & K_{2}%
\end{array}%
\right] \in \mathcal{S}^{1\times 2}$ ( $K_{1}=0$ for output feedback) that
is centralized stabilizing in the sense of Definition \ref{def:stab_central}%
, we say $K$ is stably realizable over the structure $\mathcal{S}$, i.e., $%
K\in \mathcal{S}_{K}$ if the following two conditions hold:

\begin{enumerate}
\item It has an operator realization 
\begin{equation}
K:\left\{ 
\begin{array}{c}
x_{K}=\mathcal{A}_{K}x_{K}+\mathcal{B}_{K}\bar{y} \\ 
u=\mathcal{C}_{K}x_{K}+\mathcal{D}_{K}\bar{y}%
\end{array}%
\right. ,  \label{eq:stable_real}
\end{equation}%
where

\begin{enumerate}
\item for full-information feedback $\bar{y}=\left[ x^{T},y^{T}\right] ^{T}$
and%
\begin{eqnarray*}
\mathcal{A}_{K} &\in &\mathcal{S}^{m\times m},\mathcal{B}_{K}\in \mathcal{S}%
^{m\times 2}, \\
\mathcal{C}_{K} &\in &\mathcal{S}^{1\times m},\mathcal{D}_{K}\in \mathcal{S}%
^{1\times 2},
\end{eqnarray*}

\item and for output feedback $\bar{y}=y,$%
\begin{eqnarray*}
\mathcal{A}_{K} &\in &\mathcal{S}^{m\times m},\mathcal{B}_{K}\in \mathcal{S}%
^{m\times 1}, \\
\mathcal{C}_{K} &\in &\mathcal{S}^{1\times m},\mathcal{D}_{K}\in \mathcal{S}%
^{1\times 1},
\end{eqnarray*}
\end{enumerate}

for some positive integer $m$ and stable operators $\mathcal{A}_{K}$, $%
\mathcal{B}_{K}$, $\mathcal{C}_{K}$, and $\mathcal{D}_{K}$.

\item The effects of fictitious subcontrollers' communication noise, denoted
by $n_{x}$ and $n_{u}$, on the system is bounded. That is, the
interconnection of "noisy controller"%
\begin{equation}
\bar{K}:\left\{ 
\begin{array}{c}
x_{K}=\mathcal{A}_{K}x_{K}+\mathcal{B}_{K}\bar{y}+n_{x} \\ 
u=\mathcal{C}_{K}x_{K}+\mathcal{D}_{K}\bar{y}+n_{u}%
\end{array}%
\right. ,  \label{eq:noisy_ctrl}
\end{equation}%
and the plant $P$ in (\ref{eq:general}) results in a bounded closed-loop
operator from $\bar{x}_{0}$, $w$, $n_{x}$, and $n_{y}$ to $x$, $y$, $x_{K}$,
and $u$.

We refer to (\ref{eq:stable_real}) as a stable realization or implementation
of the controller $K$.
\end{enumerate}
\end{definition}

\bigskip

Given $\mathcal{S}$, it is not clear that every centralized stabilizing
controller (in the sense of Definition \ref{def:stab_central}), $K\in 
\mathcal{S}$, can be stably realized over the network structure. Therefore,
we always have $\mathcal{S}_{K}\subseteq \mathcal{S}$. In this paper, given
a structure $\mathcal{S}$, we develop necessary and sufficient conditions in
terms of convex problems under which it is possible to find optimal
structured controllers $K\in \mathcal{S}_{K}$ that are stably implementable
over the network. A typical $S$ of interest consists of controllers with
certain sparsity or delay patterns. Furthermore, we assume that $\mathcal{S}$
is a subspace that satisfies the following:

\begin{assumption}
\label{assumption1}The set $\mathcal{S}$ is delay-invariant, contains
identity and zero, and is closed under addition and multiplication, i.e., $%
I,0\in \mathcal{S}$, $\Lambda \mathcal{S\in S}$ and for any $X,Y\in \mathcal{%
S}$, $X+Y\in \mathcal{S}$ and $XY\in \mathcal{S}.$
\end{assumption}

\begin{assumption}
\label{assumption2}The set $\mathcal{S}$ contains the coefficient operator $%
\bar{A}$, $\bar{B}_{2}$, and $\bar{C}_{2}$.
\end{assumption}

Under such assumptions, as we will show $\mathcal{S}_{K}=\mathcal{S}$. In
addition to the above assumption, we make the following assumption
pertaining to the fact that all of the measured outputs, $y$, are corrupted
by noise.

\begin{assumption}
\label{assumption3}$\bar{D}_{21}$ has a trivial left null space.
\end{assumption}

\section{Youla Operator State-Space Parametrization of Stabilizing
Controllers}

In this section, we revisit a classical robust result on parametrizing the
set of stabilizing controllers. Traditionally, a typical approach to find
the set of stabilizing controllers is via Youla-Kucera parametrization that
utilizes the doubly coprime factorization. Here, we propose a new approach
in the operator framework that do not require coprime factorization. This
approach is referred to as the \textit{Youla Operator State-Space} (YOSS)
and proves to be particularly powerful in designing decentralized
controllers for linear systems.

\subsection{YOSS for Full-Information Feedback}

For the full-information feedback problems, we first parametrize the set of
all centralized stabilizing structured controllers in the sense of
Definition \ref{def:stab_central}. And then, we will show how such
controllers can be stably realized/implemented in a distributed way over the
network structure in the sense of Definition \ref{def:stab_ctrl}. In order
to state the result, we define an affine expression $\mathcal{E}_{Q,Z}$ in
terms of bounded operators $Q$ and $Z$ as follows:%
\begin{equation}
\mathcal{E}_{Q,Z}:=\left[ 
\begin{array}{cc}
\Lambda \bar{A} & 0 \\ 
\bar{C}_{2} & 0%
\end{array}%
\right] +\left[ 
\begin{array}{cc}
\Lambda \bar{A}-I & 0 \\ 
\bar{C}_{2} & -I%
\end{array}%
\right] Q+\left[ 
\begin{array}{c}
\Lambda \bar{B}_{2} \\ 
0%
\end{array}%
\right] Z.  \label{eq:epsilon}
\end{equation}%
Then, the following theorem holds:

\begin{theorem}
\label{thm:state-feedback}The following conditions are equivalent:

\begin{enumerate}
\item There exists a centralized stabilizing structured full-information
feedback controller $K\in \mathcal{S}:\left[ x^{T},y^{T}\right]
^{T}\rightarrow u$ for the the plant (\ref{eq:plant_general}).

\item There exist stable causal operators $Q$ and $Z$ and \textbf{some} $%
\varepsilon \in \lbrack 0,1)$ such that:

\textbf{a) Model-Matching:}%
\begin{equation}
\left\Vert \mathcal{E}_{Q,Z}\right\Vert \leq \varepsilon .  \label{cond1}
\end{equation}

\textbf{b) Structure:} $Q$ and $Z$ can be partitioned as%
\begin{eqnarray}
Q &=&\left[ 
\begin{array}{cc}
\Lambda Q_{11} & \Lambda Q_{12} \\ 
Q_{21} & \Lambda Q_{22}%
\end{array}%
\right] ,Z=\left[ 
\begin{array}{cc}
Z_{1} & Z_{2}%
\end{array}%
\right] ,  \notag \\
Q_{i,j} &\in &\mathcal{S},Z_{i}\in \mathcal{S}\text{ for }i,j=1,2.
\label{eq:QZ}
\end{eqnarray}

\item For \textbf{any} $\varepsilon \in \lbrack 0,1)$, there exist stable
causal operators $Q^{\varepsilon }$ and $Z^{\varepsilon }$ such that
Conditions 2.a and 2.b hold for $Q=Q^{\varepsilon }$ and $Z=Z^{\varepsilon }$%
.

\item There exist stable causal operators $Q^{0}$ and $Z^{0}$ such that
Conditions 2.a and 2.b hold for $\varepsilon =0$, $Q=Q^{0}$, and $Z=Z^{0}$.
\end{enumerate}
\end{theorem}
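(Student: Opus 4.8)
The plan is to establish the cycle of implications $(4)\Rightarrow(3)\Rightarrow(2)\Rightarrow(1)\Rightarrow(4)$, with the genuinely substantive work being $(1)\Rightarrow(4)$ and $(2)\Rightarrow(1)$; the others are essentially bookkeeping. For $(4)\Rightarrow(3)$, since $\varepsilon=0$ is the strongest case, any $Q^0,Z^0$ with $\|\mathcal{E}_{Q^0,Z^0}\|=0$ trivially satisfy $\|\mathcal{E}_{Q^0,Z^0}\|\le\varepsilon$ for every $\varepsilon\in[0,1)$, so I would just take $Q^\varepsilon=Q^0$, $Z^\varepsilon=Z^0$. And $(3)\Rightarrow(2)$ is immediate by instantiating "for any $\varepsilon$" at, say, $\varepsilon=0$ (or any fixed value). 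So the real content is the loop through $(1)$ and back to $(4)$.

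For $(1)\Rightarrow(4)$: starting from a centralized stabilizing structured controller $K=[K_1\ K_2]\in\mathcal{S}^{1\times2}$, the closed-loop operator in (\ref{eq:cl'}) is bounded. I would define $Q$ to be (a submatrix of) the closed-loop map from $[\bar{x}_0^T,w^T]^T$-type inputs, or more precisely identify $Q$ with the stable operator
\[
\left(\begin{bmatrix}I-\Lambda\bar{A}-\Lambda\bar{B}_2K_1 & -\Lambda\bar{B}_2K_2\\ -\bar{C}_2 & I\end{bmatrix}^{-1}-\begin{bmatrix}I&0\\0&I\end{bmatrix}\right),
\]
and $Z=KP_{\mathrm{cl}}$ where $P_{\mathrm{cl}}$ is the corresponding state-to-output closed-loop block, so that $Z=[K_1\ K_2](I+Q)$ roughly. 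The algebraic identity to verify is that with these choices $\mathcal{E}_{Q,Z}=0$: this should fall out of the very definition of the closed-loop equations, since $(I+Q)$ is the inverse of the block operator and $\mathcal{E}_{Q,Z}$ is built precisely from $\bigl[\begin{smallmatrix}\Lambda\bar{A}-I&0\\ \bar{C}_2&-I\end{smallmatrix}\bigr]$ and $\bigl[\begin{smallmatrix}\Lambda\bar{B}_2\\0\end{smallmatrix}\bigr]$. The structural claim (\ref{eq:QZ}) — that $Q_{21}$ has no leading $\Lambda$ while $Q_{11},Q_{12},Q_{22}$ do, and all blocks lie in $\mathcal{S}$ — I would extract from $K\in\mathcal{S}$ together with Assumptions \ref{assumption1}–\ref{assumption2}: the $\Lambda$ factors come from the strictly-causal $\Lambda\bar{A}$, $\Lambda\bar{B}_2$ appearing in the plant, and membership in $\mathcal{S}$ is preserved because $\mathcal{S}$ contains $\bar{A},\bar{B}_2,\bar{C}_2$, contains $I$, and is closed under sum, product, and left-multiplication by $\Lambda$; the operator $(I+Q)$ being the inverse of an operator with entries in $\mathcal{S}$ means its Neumann series stays in $\mathcal{S}$, using Lemma \ref{lemma:inverse} for invertibility on $l_{\infty,e}$.

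For $(2)\Rightarrow(1)$: given stable $Q,Z$ with $\|\mathcal{E}_{Q,Z}\|\le\varepsilon<1$ and the structure (\ref{eq:QZ}), I would reverse-engineer the controller. Write $\mathcal{E}_{Q,Z}=\bigl[\begin{smallmatrix}\Lambda\bar{A}&0\\ \bar{C}_2&0\end{smallmatrix}\bigr]+\bigl[\begin{smallmatrix}\Lambda\bar{A}-I&0\\ \bar{C}_2&-I\end{smallmatrix}\bigr]Q+\bigl[\begin{smallmatrix}\Lambda\bar{B}_2\\0\end{smallmatrix}\bigr]Z=:\mathcal{E}$. Then $I+Q$ plays the role of $\bigl[\begin{smallmatrix}I-\Lambda\bar{A}-\Lambda\bar{B}_2K_1 & -\Lambda\bar{B}_2K_2\\ -\bar{C}_2 & I\end{smallmatrix}\bigr]^{-1}$ up to the bounded perturbation $\mathcal{E}$; the condition $\|\mathcal{E}\|<1$ is exactly what makes $I+\mathcal{E}$-type corrections invertible by Neumann series, so that the nominal relations hold modulo a stable, boundedly-invertible factor, giving a genuine bounded closed loop. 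One sets $K=Z(I+Q)^{-1}$ restricted to the appropriate blocks — here the partition of $Q$ matters: the $Q_{21}$ block (the "$y$ from $x$" entry) must not carry a delay, since $\bar{C}_2$ acts without delay, whereas the other three blocks inherit the $\Lambda$ from $\Lambda\bar{A}$ and $\Lambda\bar{B}_2$; this is what guarantees $K=[K_1\ K_2]$ itself lands in $\mathcal{S}$, again invoking closure of $\mathcal{S}$ under products and the Neumann-series inverse (Lemma \ref{lemma:inverse}, part 2, for the $2\times2$-partitioned operator). One must also check that $K$ so defined actually stabilizes in the sense of Definition \ref{def:stab_central} — i.e., the full closed loop from $(\bar{x}_0,w)$ to $(x,y,z,u)$ is bounded — which follows once the inverse block operator is shown bounded and $\bar{C}_1,\bar{D}_{11},\bar{D}_{12},\bar{B}_1,\bar{D}_{21}$ are bounded by hypothesis.

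The main obstacle I anticipate is bookkeeping the delay/$\Lambda$ structure consistently across the algebraic identity $\mathcal{E}_{Q,Z}=0$ and the inversion $K=Z(I+Q)^{-1}$ — in particular verifying that the block pattern (\ref{eq:QZ}) on $Q$ is both necessary (produced in $(1)\Rightarrow(4)$) and sufficient (consumed in $(2)\Rightarrow(1)$) to keep $K\in\mathcal{S}$, and that the Neumann-series inverse of $I+Q$ genuinely remains in $\mathcal{S}$ rather than merely being a bounded causal operator. The norm bound $\varepsilon<1$ is doing double duty: it is what lets $(2)\Rightarrow(1)$ go through at all (invertibility), and letting $\varepsilon=0$ in $(4)$ corresponds to the "exact" closed loop, which is always attainable because from any stabilizing $K$ the construction in $(1)\Rightarrow(4)$ yields $\mathcal{E}_{Q,Z}$ identically zero.
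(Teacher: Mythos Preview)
Your plan is essentially identical to the paper's proof: the same cycle $1\Rightarrow4\Rightarrow3\Rightarrow2\Rightarrow1$, the same constructions $Q^{0}=\bigl[\begin{smallmatrix}I-\Lambda\bar A-\Lambda\bar B_2K_1 & -\Lambda\bar B_2K_2\\ -\bar C_2 & I\end{smallmatrix}\bigr]^{-1}-I$ and $Z^{0}=[K_1\ K_2](I+Q^{0})$ for $1\Rightarrow4$, and $K=Z(I+Q)^{-1}$ with the identity $(I-\mathcal E_{Q,Z})(I+Q)^{-1}=\bigl[\begin{smallmatrix}I-\Lambda\bar A-\Lambda\bar B_2K_1 & -\Lambda\bar B_2K_2\\ -\bar C_2 & I\end{smallmatrix}\bigr]$ for $2\Rightarrow1$. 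One detail you glossed over: in $1\Rightarrow4$, boundedness of the closed loop only gives you boundedness of $(I+Q^{0})\bigl[\begin{smallmatrix}\Lambda\bar B_1\\ \bar D_{21}\end{smallmatrix}\bigr]$ and $(I+Q^{0})\bigl[\begin{smallmatrix}I\\0\end{smallmatrix}\bigr]$, and extracting boundedness of the full $I+Q^{0}$ requires Assumption~\ref{assumption3} (trivial left null space of $\bar D_{21}$), which the paper invokes explicitly.
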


\begin{corollary}
\label{cor:01}Given a centralized full-information feedback controller $K\in 
\mathcal{S}^{1\times 2}$, there exists stable operators $Q$ and $Z$ such
that (\ref{cond1}) and (\ref{eq:QZ}) hold for some $\varepsilon \in \lbrack
0,1)$ and $K$ can be decomposed as%
\begin{equation}
K=\left[ 
\begin{array}{cc}
K_{1} & K_{2}%
\end{array}%
\right] =Z\left( I+Q\right) ^{-1},  \label{eq:400}
\end{equation}%
where the inverse $\left( I+Q\right) ^{-1}$ exists by Lemma \ref%
{lemma:inverse}.
\end{corollary}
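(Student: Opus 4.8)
The plan is to produce the pair $(Q,Z)$ by the constructive implication $(1)\Rightarrow(4)$ of Theorem~\ref{thm:state-feedback} applied to the given (centralized stabilizing) full-information controller $K=\left[K_{1}\ K_{2}\right]\in\mathcal{S}^{1\times 2}$, and then to observe that this same pair recovers $K$. Write $\mathcal{A}:=\left[\begin{array}{cc}\Lambda\bar{A}&0\\\bar{C}_{2}&0\end{array}\right]$ and $\mathcal{B}_{2}:=\left[\begin{array}{c}\Lambda\bar{B}_{2}\\0\end{array}\right]$, so that $\mathcal{E}_{Q,Z}=\mathcal{A}+(\mathcal{A}-I)Q+\mathcal{B}_{2}Z$ by (\ref{eq:epsilon}), and note that closing the loop (\ref{eq:cl'}) with $K$ amounts to inverting $M:=I-\mathcal{A}-\mathcal{B}_{2}K=\left[\begin{array}{cc}I-\Lambda\bar{A}-\Lambda\bar{B}_{2}K_{1}&-\Lambda\bar{B}_{2}K_{2}\\-\bar{C}_{2}&I\end{array}\right]$. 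I would then set $Q:=M^{-1}-I$ and $Z:=KM^{-1}$. A one-line substitution gives $\mathcal{E}_{Q,Z}=I-(I-\mathcal{A}-\mathcal{B}_{2}K)M^{-1}=I-MM^{-1}=0$, so the model-matching bound (\ref{cond1}) holds with $\varepsilon=0\in[0,1)$; and since $I+Q=M^{-1}$ we get $Z=KM^{-1}=K(I+Q)$.

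From here the decomposition is immediate once $(I+Q)^{-1}$ is known to exist, and for that I would use Lemma~\ref{lemma:inverse} together with the partition (\ref{eq:QZ}) only: writing $I+Q=I-X$ with $X=-Q$, the diagonal blocks $I-X_{11}=I+\Lambda Q_{11}$ and $I-X_{22}=I+\Lambda Q_{22}$ are each the identity plus a strictly causal operator, hence invertible by part~(1) of the lemma, so $(I+Q)^{-1}$ exists by part~(2) (in fact $(I+Q)^{-1}=M$ is a bounded causal operator). Right-multiplying $Z=K(I+Q)$ by $(I+Q)^{-1}$ then gives $K=Z(I+Q)^{-1}$, with the two blocks of $K$ in the asserted form.

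The remaining content — which is exactly what the proof of Theorem~\ref{thm:state-feedback} provides — is (i) that $Q=M^{-1}-I$ and $Z=KM^{-1}$ are \emph{stable}, and (ii) that they have the structure (\ref{eq:QZ}). For (ii) I would compute $M^{-1}$ by a Schur complement about its invertible $(2,2)=I$ block, which expresses each block of $Q$ and of $Z=\left[Z_{1}\ Z_{2}\right]$ as a product/Neumann combination of $\bar{A},\bar{B}_{2},\bar{C}_{2},K_{1},K_{2}$ and powers of $\Lambda$ — e.g.\ the $(1,1)$ block of $M^{-1}$ is $(I-\Lambda N)^{-1}$ with $N:=\bar{A}+\bar{B}_{2}(K_{1}+K_{2}\bar{C}_{2})\in\mathcal{S}$ — so $\mathcal{S}$-membership follows from Assumptions~\ref{assumption1}–\ref{assumption2} (closure under sum, product, delay, and the Neumann limit); the one delicate point is to exhibit the leading $\Lambda$ with $\mathcal{S}$-valued cofactor in the $(1,1),(1,2),(2,2)$ blocks of $Q$ (e.g.\ rewriting the $(2,2)$ block $\bar{C}_{2}\Lambda Q_{12}$ as $\Lambda Q_{22}$ with $Q_{22}\in\mathcal{S}$), which is where delay-invariance of $\mathcal{S}$ is used. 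For (i), Definition~\ref{def:stab_central} a priori only bounds the closed loop along the plant's disturbance channels $\left[\begin{array}{c}\Lambda\bar{B}_{1}\\\bar{D}_{21}\end{array}\right]$ and $\left[\begin{array}{c}I\\0\end{array}\right]$; upgrading this to full $l_{p}$-boundedness of $M^{-1}$ and $KM^{-1}$ relies on the standing stabilizability/detectability of $P$ and on Assumption~\ref{assumption3}, and I expect it to be the main obstacle.
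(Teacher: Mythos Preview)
Your proposal is correct and follows essentially the same route as the paper: the paper's proof of the corollary simply reuses the $(1)\Rightarrow(4)$ construction from Theorem~\ref{thm:state-feedback}, setting $Q^{0}=M^{-1}-I$ and $Z^{0}=K(I+Q^{0})=KM^{-1}$ with your $M$, and then reads off $K=Z^{0}(I+Q^{0})^{-1}$. Your identification of Assumption~\ref{assumption3} as the ingredient that upgrades boundedness along the disturbance channels to boundedness of $M^{-1}$ and $KM^{-1}$, and of the Schur-complement/Lemma~\ref{lem:02} computation for the structure~(\ref{eq:QZ}), matches exactly what the paper does in the proof of Theorem~\ref{thm:state-feedback}.
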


Theorem \ref{thm:state-feedback} provides a convex way to parametrize the
set of all centralized stabilizing controllers that have the structure.
Furthermore, Corollary \ref{cor:01} provides a factorization for the
controller that can be used to obtain an operator realization for the
controller. To this end, note that according to (\ref{eq:400})%
\begin{equation*}
u=Z\left( I+Q\right) ^{-1}\left[ x^{T},y^{T}\right] ^{T}.
\end{equation*}%
Defining 
\begin{equation*}
x_{K}=\left( I+Q\right) ^{-1}\left[ x^{T},y^{T}\right] ^{T},
\end{equation*}%
a full-information controller can be realized as%
\begin{equation}
K:\left\{ 
\begin{array}{c}
x_{K}=\mathcal{A}_{K}x_{K}+\mathcal{B}_{K}\bar{y} \\ 
u=\mathcal{C}_{K}x_{K}+\mathcal{D}_{K}\bar{y}%
\end{array}%
\right. ,  \label{eq:FI_real}
\end{equation}%
where $\bar{y}=\left[ x^{T},y^{T}\right] ^{T}$ and%
\begin{eqnarray*}
\mathcal{A}_{K} &=&-Q\in \mathcal{S}^{2\times 2},\mathcal{B}_{K}=I_{2\times
2}\in \mathcal{S}^{2\times 2}, \\
\mathcal{C}_{K} &=&Z\in \mathcal{S}^{1\times 2},\mathcal{D}_{K}=0_{1\times
2}\in \mathcal{S}^{1\times 2},
\end{eqnarray*}%
with $Q\ $and $Z$ satisfying (\ref{cond1}) for some $\varepsilon \in \lbrack
0,1)$. This is an operator realization of the controller that conforms with
the network structure. In order to show that this realization provides a
stable implementation, in the sense of Definition \ref{def:stab_ctrl}, we
need to show that the noisy version of such a controller with fictitious
subcontrollers' communication noise as given in (\ref{eq:noisy_ctrl})
results in a bounded closed-loop map.

\begin{theorem}
\label{thm:FI_real}Any centralized stabilizing full-information controller
with structure can be stably realized over the network structure through
realization (\ref{eq:FI_real}). That is, $\mathcal{S}_{K}=\mathcal{S}$.
\end{theorem}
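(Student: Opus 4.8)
The plan is to take the operator realization \eqref{eq:FI_real} with $\mathcal{A}_K=-Q$, $\mathcal{B}_K=I$, $\mathcal{C}_K=Z$, $\mathcal{D}_K=0$, where $Q,Z$ satisfy the model-matching bound $\|\mathcal{E}_{Q,Z}\|\le\varepsilon<1$ and the structural partition \eqref{eq:QZ}, and verify both conditions in Definition \ref{def:stab_ctrl}. Condition 1 is essentially already in place: the partition \eqref{eq:QZ} guarantees $\mathcal{A}_K=-Q\in\mathcal{S}^{2\times2}$, $\mathcal{B}_K=I\in\mathcal{S}^{2\times2}$ (identity is in $\mathcal{S}$ by Assumption \ref{assumption1}), $\mathcal{C}_K=Z\in\mathcal{S}^{1\times2}$, $\mathcal{D}_K=0\in\mathcal{S}^{1\times2}$, and $(I-\mathcal{A}_K)^{-1}=(I+Q)^{-1}$ exists by Lemma \ref{lemma:inverse} since $Q$ has the form in \eqref{eq:QZ} whose diagonal blocks $\Lambda Q_{11},\Lambda Q_{22}$ are strictly causal (so $(I+Q)_{i,0}$ is invertible for every $i$). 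Thus it remains only to prove Condition 2: that the interconnection of the noisy controller \eqref{eq:noisy_ctrl} with the plant \eqref{eq:general} is a bounded operator from $(\bar x_0,w,n_x,n_u)$ to $(x,y,x_K,u)$.

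The core computation is to write down this noisy closed loop explicitly and reduce its invertibility/boundedness to the model-matching inequality. Substituting $u=\mathcal{C}_Kx_K+\mathcal{D}_K\bar y+n_u=Zx_K+n_u$ and $x_K=-Qx_K+\bar y+n_x$ into the plant equations, with $\bar y=[x^T,y^T]^T$, one gets a linear system in the stacked signal $\zeta=[x^T,y^T,x_K^T]^T$ of the form $\zeta=\mathcal{M}\zeta+(\text{affine terms in }\bar x_0,w,n_x,n_u)$, where $\mathcal{M}$ is built from $\Lambda\bar A,\Lambda\bar B_2,\bar C_2,Q,Z$. I would then recognize, using the definition \eqref{eq:epsilon} of $\mathcal{E}_{Q,Z}$, that $I-\mathcal{M}$ can be factored so that its inverse exists and is bounded precisely when $(I-\mathcal{E}_{Q,Z})^{-1}$ (equivalently $(I+\mathcal{E}_{Q,Z}\text{-type operator})^{-1}$) is bounded — and $\|\mathcal{E}_{Q,Z}\|\le\varepsilon<1$ gives that by the Neumann series. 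Concretely, the $x_K$-equation gives $x_K=(I+Q)^{-1}(\bar y+n_x)$, so $u=Z(I+Q)^{-1}(\bar y+n_x)+n_u$; feeding this into $[x^T,y^T]^T=[\,\begin{smallmatrix}\Lambda\bar A&0\\\bar C_2&0\end{smallmatrix}\,][x^T,y^T]^T+[\,\begin{smallmatrix}\Lambda\bar B_2\\0\end{smallmatrix}\,]u+(\text{inputs})$ and using $Z(I+Q)^{-1}$ together with the identity $\bar y=(I+Q)x_K+\cdots$ should collapse the gain operator on $[x^T,y^T]^T$ into exactly $\mathcal{E}_{Q,Z}$ acting on $[x^T,y^T]^T$ plus a bounded remainder, so that the map is $(I-\mathcal{E}_{Q,Z})^{-1}$ composed with bounded operators. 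Boundedness of every channel (including the new $n_x,n_u$ channels) then follows because all the building blocks $\Lambda\bar A,\Lambda\bar B_2,\bar C_2,Q,Z,(I+Q)^{-1}$ are bounded and $\|\mathcal{E}_{Q,Z}\|<1$.

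Finally, since every centralized stabilizing structured full-information controller arises, by Corollary \ref{cor:01}, as $K=Z(I+Q)^{-1}$ with $Q,Z$ satisfying \eqref{cond1} and \eqref{eq:QZ}, the realization \eqref{eq:FI_real} is available for every such $K$, and the above shows it is a \emph{stable} realization in the sense of Definition \ref{def:stab_ctrl}; hence $\mathcal{S}\subseteq\mathcal{S}_K$, and combined with the always-true inclusion $\mathcal{S}_K\subseteq\mathcal{S}$ we conclude $\mathcal{S}_K=\mathcal{S}$. The main obstacle I anticipate is the bookkeeping in the second paragraph: correctly identifying the closed-loop gain operator with $\mathcal{E}_{Q,Z}$ (rather than some cousin of it) and checking that the noise channels $n_x,n_u$ do not introduce any unbounded term — in particular that the $(I+Q)^{-1}$ factor, which is bounded but not necessarily "small," multiplies only bounded operators and never sits inside a would-be-divergent Neumann series. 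Keeping the block partitions consistent with $\mathcal{S}$ throughout (so that nothing leaves the structure) is the other place where care is needed, though Assumptions \ref{assumption1}–\ref{assumption2} make this routine.
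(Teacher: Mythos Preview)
Your proposal is correct and follows essentially the same approach as the paper: write the noisy controller as $u=Z(I+Q)^{-1}(\bar y+n_x)+n_u$, substitute into the plant to obtain a closed loop in $[x^T,y^T]^T$, and use the factorization $\bigl(I-[\begin{smallmatrix}\Lambda\bar A&0\\\bar C_2&0\end{smallmatrix}]-[\begin{smallmatrix}\Lambda\bar B_2\\0\end{smallmatrix}]Z(I+Q)^{-1}\bigr)^{-1}=(I+Q)(I-\mathcal{E}_{Q,Z})^{-1}$ together with $\|\mathcal{E}_{Q,Z}\|<1$ to conclude boundedness of $x,y,x_K,u$. The paper carries out exactly this computation explicitly (your anticipated ``bookkeeping'' obstacle is resolved by the identity just stated, which is also used in the $2\Rightarrow1$ direction of Theorem~\ref{thm:state-feedback}); your sketch is accurate.
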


According to Theorem \ref{thm:state-feedback}, parametrizing the set of all
full-information stabilizing controllers amounts to solving an optimization
problem (\ref{cond1}) over the space of stable operators $\left( Q,Z\right) $
for some given $\varepsilon \in \lbrack 0,1)$. Such optimization problems
are in the generic form $\inf_{R}\left\Vert T_{1}+T_{2}RT_{3}\right\Vert $,
where $T_{1}$, $T_{2}$, $T_{3}$, and $R$ are stable operators. This
optimization, which is commonly referred to as a \textit{model-matching
problem} in robust control community, is convex and there are efficient
methods to solve it with arbitrary accuracy. In particular, when the
operator norm is taken to be the $l_{\infty }$ induced norm, this
optimization can be cast as a linear program.

\begin{remark}
According to Theorem \ref{thm:state-feedback}, although the model-matching (%
\ref{cond1}) can be carried out with $\varepsilon =0$, it is advantageous,
for computations, to relax the condition to $\varepsilon \in \lbrack 0,1)$
as stated in the theorem. As a matter fact, if $\varepsilon $ is set to
zero, there might not exist any finite-impulse response $Q$ and $Z$ that
satisfy (\ref{cond1}). For example, consider a trivial uncontrollable but
stable system%
\begin{equation*}
x\left( t+1\right) =\frac{1}{2}x\left( t\right) +w\left( t\right) +0u\left(
t\right) ,y\left( t\right) =x\left( t\right) +w\left( t\right) .
\end{equation*}%
Then, it can be easily verified that $Q=\left[ 
\begin{array}{cc}
\Lambda Q_{11} & 0 \\ 
I+\Lambda Q_{11} & 0%
\end{array}%
\right] $, and any stable $Z$ satisfy (\ref{cond1}) for $\varepsilon =0$ if
and only if $Q_{11}$ has the impulse response $\left\{ \frac{1}{2},\left( 
\frac{1}{2}\right) ^{2},\left( \frac{1}{2}\right) ^{3},...\right\} $. This
is particularly important since the approximation with FIR will fail to
provide performance guarantees.
\end{remark}

\subsection{YOSS for Output Feedback}

The output feedback controllers form a subset of full-information feedback
controllers, which were parametrized in Theorem \ref{thm:state-feedback} and
Corollary \ref{cor:01}. More precisely, a full-information controller (\ref%
{eq:400}) is an output feedback controller if $K_{1}=0$ or equivalently%
\begin{equation}
Z\left( I+Q\right) ^{-1}\left[ 
\begin{array}{c}
I \\ 
0%
\end{array}%
\right] =0.  \label{cond2_output}
\end{equation}%
Therefore, we have the following theorem.

\begin{theorem}
\label{lem:100}The following conditions are equivalent:

\begin{enumerate}
\item There exists a centralized stabilizing output-feedback controller $%
K:y\rightarrow u$.

\item There exist stable causal operators $Q$ and $Z$ and \textbf{some} $%
\varepsilon \in \lbrack 0,1)$ such that (\ref{cond1})-(\ref{eq:QZ}) and (\ref%
{cond2_output}) hold.

\item For \textbf{any} $\varepsilon \in \lbrack 0,1)$, there exist stable
causal operators $Q^{\varepsilon }$ and $Z^{\varepsilon }$ such that
Condition 2 holds for $Q=Q^{\varepsilon }$ and $Z=Z^{\varepsilon }$.

\item There exist stable causal operators $Q^{0}$ and $Z^{0}$ such that (\ref%
{cond1}), (\ref{eq:QZ}), and (\ref{cond2_output}) hold for $Q=Q^{0}$, $%
Z=Z^{0}$, and $\varepsilon =0$.
\end{enumerate}

If any of the above equivalent conditions hold, a stabilizing
output-feedback controller $K:y\rightarrow u$ is given by%
\begin{equation*}
K=Z\left( I+Q\right) ^{-1}\left[ 
\begin{array}{c}
0 \\ 
I%
\end{array}%
\right] .
\end{equation*}
\end{theorem}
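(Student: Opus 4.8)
The plan is to derive the equivalences directly from Theorem \ref{thm:state-feedback} and Corollary \ref{cor:01}, which already characterize all centralized stabilizing full-information controllers with the structure. The key observation is that an output feedback controller is nothing but a full-information controller of the form (\ref{eq:400}) in which the "feedthrough" from the state $x$ vanishes, i.e., one for which $K_1 = Z(I+Q)^{-1}\left[\begin{smallmatrix} I \\ 0\end{smallmatrix}\right] = 0$. So the additional constraint (\ref{cond2_output}) is exactly what carves out the output feedback subclass from the full-information class. Accordingly, the implications $1 \Rightarrow 2$, $2 \Rightarrow 1$, and so forth will be obtained by adjoining (\ref{cond2_output}) to the corresponding statements in Theorem \ref{thm:state-feedback}.

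First I would prove $1 \Rightarrow 4$: suppose a centralized stabilizing output feedback controller $K: y \rightarrow u$ exists. Then $\left[\begin{smallmatrix} 0 & K\end{smallmatrix}\right] \in \mathcal{S}^{1\times 2}$ is a centralized stabilizing full-information controller (the closed loop in (\ref{eq:cl'}) with $K_1 = 0$ is precisely the output-feedback closed loop), so by the equivalence of Conditions 1 and 4 in Theorem \ref{thm:state-feedback} there exist stable $Q^0, Z^0$ satisfying (\ref{cond1})--(\ref{eq:QZ}) with $\varepsilon = 0$. Moreover, by Corollary \ref{cor:01} applied to the controller $\left[\begin{smallmatrix} 0 & K\end{smallmatrix}\right]$, we get the factorization $\left[\begin{smallmatrix} 0 & K\end{smallmatrix}\right] = Z^0(I+Q^0)^{-1}$, whose first block column reads $Z^0(I+Q^0)^{-1}\left[\begin{smallmatrix} I \\ 0\end{smallmatrix}\right] = 0$, i.e., (\ref{cond2_output}) holds. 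The chain $4 \Rightarrow 3 \Rightarrow 2$ is then immediate: Condition 4 is the special case $\varepsilon = 0$ of Condition 3, and Condition 3 (for any $\varepsilon$) trivially yields Condition 2 (for some $\varepsilon$). It remains to close the loop with $2 \Rightarrow 1$ and, if one wants the stronger statements, $2 \Rightarrow 3$. For $2 \Rightarrow 1$: given stable $Q, Z$ satisfying (\ref{cond1})--(\ref{eq:QZ}) and (\ref{cond2_output}), Theorem \ref{thm:state-feedback} (2$\Rightarrow$1) produces a centralized stabilizing full-information $K = \left[\begin{smallmatrix} K_1 & K_2\end{smallmatrix}\right] = Z(I+Q)^{-1}$; constraint (\ref{cond2_output}) forces $K_1 = 0$, so $K_2 = Z(I+Q)^{-1}\left[\begin{smallmatrix} 0 \\ I\end{smallmatrix}\right]$ is a centralized stabilizing output feedback controller, which also establishes the final formula in the statement. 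For the $\varepsilon$-independence ($2 \Rightarrow 3$), I would go from Condition 2 to Condition 1 and then back out via $1 \Rightarrow 4 \Rightarrow 3$, so nothing new is needed.

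The step I expect to require the most care is verifying that the extra constraint (\ref{cond2_output}) is genuinely preserved under the passage through Theorem \ref{thm:state-feedback}, in both directions. In the forward direction one must check that the $Q^0, Z^0$ furnished by Theorem \ref{thm:state-feedback} for the full-information controller $\left[\begin{smallmatrix} 0 & K\end{smallmatrix}\right]$ are exactly the pair delivered by Corollary \ref{cor:01}'s factorization, so that $K_1 = 0$ translates literally into $Z^0(I+Q^0)^{-1}\left[\begin{smallmatrix} I \\ 0\end{smallmatrix}\right] = 0$; this hinges on the uniqueness/consistency of the factorization (\ref{eq:400}) and on the fact that $\left(I+Q\right)^{-1}$ exists by Lemma \ref{lemma:inverse} given the block structure in (\ref{eq:QZ}). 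In the reverse direction one must confirm that when (\ref{cond2_output}) holds the controller $K = Z(I+Q)^{-1}$ produced by Theorem \ref{thm:state-feedback} really has vanishing first block, and hence that its second block $K_2$ — which is what gets implemented — is by itself a well-defined element of $\mathcal{S}$ and stabilizing as an output feedback law. Once these bookkeeping points are nailed down, the equivalences follow mechanically from the full-information result. \hfill\rule{0.5em}{0.5em}
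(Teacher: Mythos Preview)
Your proposal is correct and follows essentially the same approach as the paper: the paper's proof simply observes that an output feedback controller is a full-information controller with $K_1=0$, invokes Theorem \ref{thm:state-feedback} and Corollary \ref{cor:01}, and notes that (\ref{cond2_output}) is precisely the condition $K_1=0$. Your bookkeeping concern about whether the $(Q^0,Z^0)$ from Theorem \ref{thm:state-feedback} coincide with those of Corollary \ref{cor:01} is a non-issue, since in the paper both are defined by the identical formulas (\ref{Q0})--(\ref{Z0}).
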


\bigskip

Although finding the set of stabilizing full-information feedback
controllers, as given in Theorem \ref{thm:state-feedback}, is a convex and
tractable problem, this is not the case for output feedback due to the
non-convexity associated with (\ref{cond2_output}). We emphasize that (\ref%
{cond2_output}) is enforced to restrict the set of full-information
stabilizing controllers, $u=\left[ 
\begin{array}{cc}
K_{1} & K_{2}%
\end{array}%
\right] \left[ 
\begin{array}{c}
x \\ 
y%
\end{array}%
\right] $, to the ones whose $K_{1}$ element is zero. In what follows, we
will relax condition (\ref{cond2_output}), in order to preserve the
convexity, and will replace $x$ with its estimation $\hat{x}$, which is
generated based on $y$, in order to preserve the output feedback structure.
More precisely, we will consider controllers of the following form%
\begin{equation*}
u=\left[ 
\begin{array}{cc}
K_{1} & K_{2}%
\end{array}%
\right] \left[ 
\begin{array}{c}
\hat{x} \\ 
y%
\end{array}%
\right] ,
\end{equation*}%
where $\hat{x}$ is an estimation of $x$ generated from observing $y$ and $u$
such that the estimation error, $e=\hat{x}-x$, is a bounded signal. The estimation $\hat{x}$ is generated by an
state-estimator in the following generic form%
\begin{equation}
\hat{x}=E_{1}u+E_{2}y,  \label{eq:estimation}
\end{equation}%
where $E_{1}$ and $E_{2}$ are causal operators. Later (in Theorem \ref%
{thm:SE}), we will show that there always exist bounded operators $E_{1}$
and $E_{2}$ conforming with the network structure, i.e., $E_{1},E_{2}\in 
\mathcal{S}$.

\begin{theorem}
\label{thm:cond_output}Suppose $\hat{x}$ is a an estimation of $x$ generated
via the generic estimator (\ref{eq:estimation}) such that $E_{1},E_{2}\in 
\mathcal{S}$ and $\left\Vert e\right\Vert \leq \delta $ for some $\delta >0$%
, where $e=\hat{x}-x$. Then, the following conditions are equivalent:

\begin{enumerate}
\item A structured output-feedback controller $\tilde{K}\mathcal{\in S}%
:y\rightarrow u$ is stabilizing when implemented in the centralized way.

\item There exists causal operators $K_{1},K_{2}\in \mathcal{S}$ such that%
\begin{equation}
u=\left[ 
\begin{array}{cc}
K_{1} & K_{2}%
\end{array}%
\right] \left[ 
\begin{array}{c}
\hat{x} \\ 
y%
\end{array}%
\right] =\tilde{K}y.  \label{eq:ctrl0}
\end{equation}

\item There exist stable causal operators $Q$ and $Z$ and \textbf{some} $%
\varepsilon \in \lbrack 0,1)$ such that%
\begin{equation}
\left[ 
\begin{array}{cc}
K_{1} & K_{2}%
\end{array}%
\right] =Z\left( I+Q\right) ^{-1},  \label{eq:ctrl}
\end{equation}%
and $Q$ and $Z$ satisfy:

\begin{enumerate}
\item Model-Matching (\ref{cond1}) holds.

\item Structure constraints (\ref{eq:QZ}) are satisfied.
\end{enumerate}

\item For \textbf{any} $\varepsilon \in \lbrack 0,1)$, there exist stable
causal operators $Q^{\varepsilon }$ and $Z^{\varepsilon }$ such that
Condition 3 holds for $Q=Q^{\varepsilon }$ and $Z=Z^{\varepsilon }$.
\end{enumerate}
\end{theorem}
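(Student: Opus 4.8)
The plan is to establish the chain of equivalences $1 \Leftrightarrow 2 \Rightarrow 3 \Rightarrow 4 \Rightarrow 2$, leveraging Theorem~\ref{thm:state-feedback} and Theorem~\ref{lem:100} as the engine, and using the estimator $\hat{x} = E_1 u + E_2 y$ together with the bound $\|e\| \le \delta$ to translate between full-information-like controllers and genuine output-feedback controllers. First I would prove $1 \Leftrightarrow 2$: given a centralized stabilizing structured output-feedback controller $\tilde K \in \mathcal{S}$, I would simply take $K_1 = 0$ and $K_2 = \tilde K$, so that $\begin{bmatrix} K_1 & K_2 \end{bmatrix}\begin{bmatrix}\hat x^T & y^T\end{bmatrix}^T = \tilde K y$ trivially; conversely, given $K_1, K_2 \in \mathcal{S}$ satisfying \eqref{eq:ctrl0}, substituting $\hat x = E_1 u + E_2 y$ yields $u = K_1 E_1 u + K_1 E_2 y + K_2 y$, so $(I - K_1 E_1) u = (K_1 E_2 + K_2) y$; by Assumption~\ref{assumption1} the operator $\tilde K := (I - K_1 E_1)^{-1}(K_1 E_2 + K_2)$ lies in $\mathcal{S}$ (invertibility of $I - K_1 E_1$ needs a short argument — see below), and the point is that because $\|e\| \le \delta$, feeding $\hat x$ versus the true $x$ only perturbs the closed loop by a bounded amount, so centralized stability is preserved. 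This is where I would invoke the newly-formulated separation principle alluded to in the introduction.

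Next I would prove $2 \Rightarrow 3$. Starting from $u = \begin{bmatrix} K_1 & K_2\end{bmatrix}\begin{bmatrix}\hat x^T & y^T\end{bmatrix}^T$ with $K_1, K_2 \in \mathcal{S}$ and $e = \hat x - x$ bounded, I would rewrite this as a full-information-like controller acting on $\begin{bmatrix} x^T & y^T\end{bmatrix}^T$ plus a bounded disturbance: $u = \begin{bmatrix}K_1 & K_2\end{bmatrix}\begin{bmatrix}x^T & y^T\end{bmatrix}^T + K_1 e$. Since $\tilde K y = u$ stabilizes the plant in the centralized sense and $e$ enters as a bounded signal, the full-information-like controller $\begin{bmatrix}K_1 & K_2\end{bmatrix}$ is itself centralized stabilizing for the plant \eqref{eq:plant_general}. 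Then Theorem~\ref{thm:state-feedback} (specifically the implication $1 \Rightarrow 2$, or more strongly $1 \Rightarrow 3$) gives stable causal $Q, Z$ satisfying the model-matching bound \eqref{cond1} and the structural partition \eqref{eq:QZ}, and Corollary~\ref{cor:01} supplies the factorization $\begin{bmatrix}K_1 & K_2\end{bmatrix} = Z(I+Q)^{-1}$, which is exactly \eqref{eq:ctrl}. The implication $3 \Rightarrow 4$ is immediate from the corresponding equivalence in Theorem~\ref{thm:state-feedback} (Condition~2 $\Leftrightarrow$ Condition~3 there: one can always push $\varepsilon$ to any value in $[0,1)$). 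Finally $4 \Rightarrow 2$: given $Q^\varepsilon, Z^\varepsilon$ satisfying Condition~3, set $\begin{bmatrix}K_1 & K_2\end{bmatrix} = Z^\varepsilon (I + Q^\varepsilon)^{-1}$ (inverse exists by Lemma~\ref{lemma:inverse}), which by Theorem~\ref{thm:state-feedback} is a centralized stabilizing full-information-like controller, hence defines $u$ via \eqref{eq:ctrl0} with the estimate $\hat x$ in place of $x$.

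The main obstacle I anticipate is the "$\hat x \to x$ perturbation preserves stability" step — i.e., showing rigorously that replacing $x$ by $\hat x = x + e$ with $e$ bounded does not destroy centralized closed-loop boundedness, and conversely that a controller designed with true $x$ remains stabilizing with $\hat x$. This requires writing out the closed loop \eqref{eq:cl'} with the extra input $K_1 e$, verifying the relevant inverse operator $\bigl(I - \Lambda \bar A - \Lambda \bar B_2 K_1,\ -\Lambda \bar B_2 K_2;\ -\bar C_2,\ I\bigr)$ is stable (which follows from centralized stability of $\begin{bmatrix}K_1 & K_2\end{bmatrix}$), and then noting $e$ is a bounded exogenous-type signal so it maps to bounded $x, y, u, x_K$. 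A secondary technical point is the invertibility of $I - K_1 E_1$ in the $2 \Rightarrow 1$ direction: since $K_1$ turns out to be strictly causal in the realization (its first partition block carries a $\Lambda$ by \eqref{eq:QZ}, as $Q_{11}$ and $Q_{12}$ are multiplied by $\Lambda$), $K_1 E_1$ is strictly causal and Lemma~\ref{lemma:inverse} applies to give $(I - K_1 E_1)^{-1} \in \mathcal{S}$. The existence of the estimator with $E_1, E_2 \in \mathcal{S}$ and bounded error is deferred to Theorem~\ref{thm:SE} and may be cited here.
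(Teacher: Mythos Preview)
Your proposal follows essentially the same strategy as the paper's proof: establish $1 \Leftrightarrow 2$ by the trivial choice $K_1=0,\ K_2=\tilde K$ and by algebraically eliminating $\hat x$ via the estimator, then close a cycle among conditions $2,3,4$ using the full-information parametrization (Theorem~\ref{thm:state-feedback} and Corollary~\ref{cor:01}) together with a perturbation argument based on the boundedness of $e=\hat x - x$. The paper runs the cycle as $2 \Rightarrow 4 \Rightarrow 3 \Rightarrow 2$ rather than your $2 \Rightarrow 3 \Rightarrow 4 \Rightarrow 2$, but this is immaterial; the explicit closed-loop computation you sketch for the ``$\hat x \to x$ perturbation preserves stability'' step is exactly what the paper does in its $3 \Rightarrow 2$ direction, yielding expressions of the form $\bigl(I+Q\bigr)\bigl(I-\mathcal{E}_{Q,Z}\bigr)^{-1}\{\cdots\}$ with an additional bounded $e$-term.

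One correction is needed in your invertibility argument for $I - K_1 E_1$. It is not $K_1$ that is strictly causal: $K_1$ is the first block of $Z(I+Q)^{-1}$, and $Z_1$ in \eqref{eq:QZ} carries no leading $\Lambda$, so $K_1$ need not be strictly causal. The correct observation is that $E_1$ is strictly causal. By Theorem~\ref{thm:SE} the estimator has the form $\hat x = R_1 \Lambda \bar B_2 u - R_2 y$, i.e., $E_1 = R_1 \Lambda \bar B_2$, which is manifestly strictly causal because of the explicit $\Lambda$. Hence $K_1 E_1$ is strictly causal and Lemma~\ref{lemma:inverse} gives the inverse; Lemma~\ref{lem:02} and Assumption~\ref{assumption1} then put $(I-K_1 E_1)^{-1}(K_1 E_2 + K_2)$ in $\mathcal{S}$. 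The paper makes exactly this point, deferring the strict causality of $E_1$ to Theorem~\ref{thm:SE}.
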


In the light of this theorem, the parametrization of all stabilizing output
feedback controllers can be carried out via a convex search over parameters $%
\left( Q,Z\right) $ provided that an estimation $\hat{x}$ of the states $x$
with uniform bound, $\left\Vert \hat{x}-x\right\Vert \leq \delta $, is
available, which is the subject of next section. By uniformity we mean that
the bound $\delta $ does not depend on the control input $u$ and is uniform
with respect to $w$ and $\bar{x}_{0}$. More precisely, the should exist $%
\delta _{1},\delta _{2}\geq 0$ such that $\left\Vert e\right\Vert \leq
\delta _{1}\left\Vert \bar{x}_{0}\right\Vert +\delta _{2}\left\Vert
w\right\Vert $. The parametrization presented here does not require the
doubly coprime factorization of the plant and this is an advantage
especially when designing decentralized controllers. Furthermore, this
parameterization holds valid not only for LTI systems but also any other
linear system, e.g., delayed , time-varying, or switching systems.

\bigskip

\subsubsection{State-Estimator}

Here, we will define an state-estimator that mimics the standard Luenberger
observer with a difference that its observer gain is a possibly unbounded
operator. This state-estimator is referred to as the \textit{Generalized
Luenberger Observer }and has the form%
\begin{equation}
\hat{x}=\Lambda \bar{A}\hat{x}+\Lambda \bar{B}_{2}u+\Lambda L\left( \bar{C}%
_{2}\hat{x}-y\right) ,  \label{LO}
\end{equation}%
where $L$ is the observer operator-gain which can possibly be unstable.

\begin{theorem}
\label{thm:SE}If there exists a centralized stabilizing output feedback
controller with structure, there always exists a generalized Luenberger
observer (\ref{LO}) with the network structure. The observer operator-gain
is given by%
\begin{equation}
L=\left( I+Q_{L}^{\varepsilon }\Lambda \right) ^{-1}Z_{L}^{\varepsilon }\in 
\mathcal{S}  \label{OG}
\end{equation}%
where, for any $\varepsilon \in \lbrack 0,1)$, $Q_{L}^{\varepsilon }\in 
\mathcal{S}$ and $Z_{L}^{\varepsilon }\in \mathcal{S}$ are stable causal
operators satisfying $\left\Vert \mathcal{E}_{L}\right\Vert \leq \varepsilon 
$, where 
\begin{equation}
\mathcal{E}_{L}:=\bar{A}+Z_{L}^{\varepsilon }\bar{C}_{2}-Q_{L}^{\varepsilon
}\left( I-\Lambda \bar{A}\right) .  \label{eq:606}
\end{equation}%
In this case, the state-estimator is simplified to 
\begin{equation}
\hat{x}=R_{1}\Lambda \bar{B}_{2}u-R_{2}y,  \label{eq:605}
\end{equation}%
where 
\begin{equation}
R_{1}:=\left[ I-\Lambda \left( \bar{A}+L\bar{C}_{2}\right) \right] ^{-1}\in 
\mathcal{S}{\normalsize ,}  \label{eq:R1}
\end{equation}%
and 
\begin{equation}
R_{2}:=R_{1}\Lambda L\in \mathcal{S},  \label{eq:R2}
\end{equation}%
are bounded operators. Furthermore, the estimation error $e$ is given by%
\begin{equation}
e=\hat{x}-x=-\left( R_{1}\Lambda \bar{B}_{1}+R_{2}\bar{D}_{21}\right) w-R_{1}%
\bar{x}_{0}.  \label{eq:error}
\end{equation}
\end{theorem}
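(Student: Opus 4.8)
The plan is to mirror the full-information development of Theorem~\ref{thm:state-feedback}, but applied to the ``dual'' object, namely the observer error dynamics rather than the closed-loop state. First I would argue that the existence of a centralized stabilizing output-feedback controller forces a certain detectability-type property on the pair $(\bar A,\bar C_2)$ within the structure $\mathcal S$; concretely, that there exist stable $Q_L,Z_L\in\mathcal S$ solving the model-matching inequality $\|\mathcal E_L\|\le\varepsilon$ with $\mathcal E_L=\bar A+Z_L\bar C_2-Q_L(I-\Lambda\bar A)$. The cleanest route is to transpose (or take the adjoint/flip) of the full-information result: the operator $\mathcal E_{Q,Z}$ in~\eqref{eq:epsilon} with the roles of $(\bar A,\bar B_2)$ replaced by $(\bar A^{\mathrm{flip}},\bar C_2^{\mathrm{flip}})$ collapses to exactly $\mathcal E_L$ once the trivial $Z$-block bookkeeping is carried out, and Assumptions~\ref{assumption1}--\ref{assumption2} (delay-invariance, closure under $+$ and $\times$, containment of $\bar A$, $\bar C_2$) are symmetric enough to survive this operation, so $Q_L^\varepsilon,Z_L^\varepsilon\in\mathcal S$ for every $\varepsilon\in[0,1)$.

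Next I would verify that $L$ in~\eqref{OG} is well defined and lies in $\mathcal S$: the operator $I+Q_L^\varepsilon\Lambda$ is $I$ minus a strictly causal operator (since $Q_L^\varepsilon\Lambda$ is strictly causal by the discussion after~\eqref{eq:matrix}), hence invertible on $l_{\infty,e}$ by Lemma~\ref{lemma:inverse}(1) with a stable inverse when $\|Q_L^\varepsilon\|$ is controlled, and $\mathcal S$ is closed under this inverse because $(I+Q_L^\varepsilon\Lambda)^{-1}=\sum_{k\ge0}(-Q_L^\varepsilon\Lambda)^k$, a convergent Neumann series whose partial sums lie in $\mathcal S$ (using $\Lambda\mathcal S\subseteq\mathcal S$ and closure under products and sums), with the limit staying in $\mathcal S$ since $\mathcal S$ is a (closed) subspace. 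Then I would substitute $L=(I+Q_L^\varepsilon\Lambda)^{-1}Z_L^\varepsilon$ into the Luenberger recursion~\eqref{LO} and algebraically solve for $\hat x$. Writing~\eqref{LO} as $(I-\Lambda(\bar A+L\bar C_2))\hat x=\Lambda\bar B_2 u-\Lambda L y$ and checking that $I-\Lambda(\bar A+L\bar C_2)$ is again $I$ minus a strictly causal operator, Lemma~\ref{lemma:inverse}(1) gives $R_1\in\mathcal S$ in~\eqref{eq:R1}; multiplying through yields~\eqref{eq:605} with $R_2=R_1\Lambda L\in\mathcal S$ as in~\eqref{eq:R2}.

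Finally, for the error formula~\eqref{eq:error}, I would subtract the plant state equation from~\eqref{eq:general} (the $x=\Lambda\bar A x+\Lambda\bar B_1 w+\Lambda\bar B_2 u+\bar x_0$ line and $y=\bar C_2 x+\bar D_{21}w$) from the observer equation~\eqref{LO}; the $\Lambda\bar B_2 u$ terms cancel, $y$ is eliminated using $\bar C_2\hat x-y=\bar C_2 e-\bar D_{21}w$, and one is left with $e=\Lambda(\bar A+L\bar C_2)e-\Lambda\bar B_1 w-\Lambda L\bar D_{21}w-\bar x_0$, which inverts to $e=-R_1(\Lambda\bar B_1+\Lambda L\bar D_{21})w-R_1\bar x_0=-(R_1\Lambda\bar B_1+R_2\bar D_{21})w-R_1\bar x_0$, matching~\eqref{eq:error}. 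Since $R_1$, $R_2$, $\bar B_1$, $\bar D_{21}$ are all bounded, $e$ is a bounded operator of $(w,\bar x_0)$ with no dependence on $u$, giving the uniform bound $\|e\|\le\delta_1\|\bar x_0\|+\delta_2\|w\|$ that Theorem~\ref{thm:cond_output} requires. The main obstacle I anticipate is the first step: making the ``dualization'' of Theorem~\ref{thm:state-feedback} rigorous --- precisely identifying which transpose/flip operation sends $(\bar A,\bar B_2,\mathcal S)$ to a configuration where the earlier theorem applies verbatim, and confirming that the structure set $\mathcal S$ and its defining Assumptions~\ref{assumption1}--\ref{assumption2} are preserved under it (in particular that containing $\bar C_2$ plays the role that containing $\bar B_2$ did). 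If a clean duality is not available, the fallback is to rerun the entire proof of Theorem~\ref{thm:state-feedback} line by line on the error dynamics, which is longer but structurally identical.
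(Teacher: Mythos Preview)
Your error-formula derivation and the $R_1,R_2$ bookkeeping are essentially what the paper does, but your route to the \emph{existence} of $Q_L^\varepsilon,Z_L^\varepsilon\in\mathcal S$ is genuinely different, and your worry about it is well founded. The paper does not dualize Theorem~\ref{thm:state-feedback} at all. Instead it exploits the hypothesis directly: given a structured stabilizing output-feedback $u=K_2y$ with $K_2\in\mathcal S$, it simply sets $L:=\bar B_2K_2\in\mathcal S$. The point is that then $\bar A+L\bar C_2=\bar A+\bar B_2K_2\bar C_2$ is exactly the closed-loop state operator, so $[I-\Lambda(\bar A+L\bar C_2)]^{-1}$ is bounded \emph{by the assumed stability of the closed loop}, and lies in $\mathcal S$ by Lemma~\ref{lem:02}. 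From this $L$ the paper then reads off $\Lambda Q_L^0:=[I-\Lambda(\bar A+L\bar C_2)]^{-1}-I$ and $\Lambda Z_L^0:=[I-\Lambda(\bar A+L\bar C_2)]^{-1}\Lambda L$, and a one-line check gives $\mathcal E_L=0$. This bypasses any transpose/flip operation, which in the general operator setting (LTV, delayed, switched) need not preserve causality or the structure $\mathcal S$, so your anticipated obstacle is real and the paper's construction avoids it entirely.

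There is also a smaller gap in your second step. Your Neumann-series argument for $(I+Q_L^\varepsilon\Lambda)^{-1}$ requires $\|Q_L^\varepsilon\Lambda\|<1$, which is not guaranteed (and $\mathcal S$ is not assumed norm-closed). The paper sidesteps this by never asserting that $L$ is bounded --- indeed it explicitly allows $L$ to be unstable --- and instead proves boundedness of $R_1,R_2$ directly via the algebraic identity
\[
I-\Lambda(\bar A+L\bar C_2)=(I+\Lambda Q_L^\varepsilon)^{-1}(I-\Lambda\mathcal E_L),
\]
so that $R_1=(I-\Lambda\mathcal E_L)^{-1}(I+\Lambda Q_L^\varepsilon)$ and $R_2=(I-\Lambda\mathcal E_L)^{-1}\Lambda Z_L^\varepsilon$; boundedness then follows from $\|\mathcal E_L\|\le\varepsilon<1$ and membership in $\mathcal S$ from Assumptions~\ref{assumption1}--\ref{assumption2}. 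Your subtraction argument for~\eqref{eq:error} is correct and matches the paper's.
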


A realization of the state-estimator is given by (\ref{eq:606}) and can also
be rewritten as%
\begin{equation}
\hat{x}=\Lambda \mathcal{E}_{L}\hat{x}+\left( I+\Lambda Q_{L}^{\varepsilon
}\right) \Lambda \bar{B}_{2}u-\Lambda Z_{L}^{\varepsilon }y.  \label{SE}
\end{equation}

\subsubsection{Stable Realization of Output Feedback}

The operator forms (\ref{eq:ctrl}) and (\ref{SE}) provide basis for stable
realization of the overall controller over the network. The controller (\ref%
{eq:ctrl}) can be realized as%
\begin{eqnarray}
\left[ 
\begin{array}{c}
\xi _{1} \\ 
\xi _{2}%
\end{array}%
\right] &=&-\left[ 
\begin{array}{cc}
\Lambda Q_{11} & \Lambda Q_{12} \\ 
Q_{21} & \Lambda Q_{22}%
\end{array}%
\right] \left[ 
\begin{array}{c}
\xi _{1} \\ 
\xi _{2}%
\end{array}%
\right] +\left[ 
\begin{array}{c}
\hat{x} \\ 
y%
\end{array}%
\right] ,  \notag \\
u &=&\left[ 
\begin{array}{cc}
Z_{1} & Z_{2}%
\end{array}%
\right] \left[ 
\begin{array}{c}
\xi _{1} \\ 
\xi _{2}%
\end{array}%
\right] .  \label{eq:re_ctrl0}
\end{eqnarray}%
Then, combining the realization of the state-estimator (\ref{SE}) with (\ref%
{eq:re_ctrl0}), we obtain the following realization for the output feedback
controller $K:y\rightarrow u$:%
\begin{equation}
K:\left\{ 
\begin{array}{c}
x_{K}=\mathcal{A}_{K}x_{K}+\mathcal{B}_{K}y \\ 
u=\mathcal{C}_{K}x_{K}%
\end{array}%
\right. ,  \label{eq:realization}
\end{equation}%
where%
\begin{eqnarray}
&&\mathcal{A}_{K}=\left[ 
\begin{array}{ccc}
\Lambda \mathcal{E}_{L} & \left( I+\Lambda Q_{L}^{\varepsilon }\right)
\Lambda \bar{B}_{2}Z_{1} & \left( I+\Lambda Q_{L}^{\varepsilon }\right)
\Lambda \bar{B}_{2}Z_{2} \\ 
I & -\Lambda Q_{11} & -\Lambda Q_{12} \\ 
0 & -Q_{21} & -\Lambda Q_{22}%
\end{array}%
\right] ,  \notag \\
&&\mathcal{B}_{K}=\left[ 
\begin{array}{c}
-\Lambda Z_{L}^{\varepsilon } \\ 
0 \\ 
I%
\end{array}%
\right] ,\mathcal{C}_{K}=\left[ 
\begin{array}{ccc}
0 & Z_{1} & Z_{2}%
\end{array}%
\right] .  \label{eq:8000}
\end{eqnarray}%
In the above expression, $x_{K}$ is given by $x_{K}=\left[ \hat{x}^{T},\xi
_{1}^{T},\xi _{2}^{T}\right] ^{T}$, where $\hat{x}$ is the state-estimator
as given in (\ref{SE}) and $\xi _{1}$ and $\xi _{2}$ are the signals given
in (\ref{eq:re_ctrl0}). The realization (\ref{eq:realization}) is
implementable over the network as operators $\mathcal{A}_{K}$, $\mathcal{B}%
_{K}$, and $\mathcal{C}_{K}$ conform with the network structure $\mathcal{S}$
\footnote{%
For sparsity structures, one can find the realization of each subcontroller,
from the realization of the aggregate controller, using a procedure similar
to that of \cite{sabuau2014youla}.}. We, however, need to show that such
implementation is stable. That is, the stability of the system is preserved
even if the signals traveling between the subcontrollers are subject to
noise, that is, if the noisy controller 
\begin{equation}
\bar{K}:\left\{ 
\begin{array}{c}
x_{K}=\mathcal{A}_{K}x_{K}+\mathcal{B}_{K}y+n_{x} \\ 
u=\mathcal{C}_{K}x_{K}+n_{u}%
\end{array}%
\right.  \label{eq:real_n}
\end{equation}%
is implemented. In other words, we need to show that the closed-loop maps
from subcontrollers communication noise $n_{x}$ and $n_{u}$ to signals $x$, $%
y$, $u$, and $x_{K}$ are bounded.

\begin{theorem}
\label{thm:realization}The realization given in (\ref{eq:realization}) can
be implemented stably over the given network structure $\mathcal{S}$. That
is, the feedback interconnection of $\bar{K}$ and the plant results in
bounded signals $x$, $y$, $x_{K}$ and $u$. This is , $\mathcal{S}_{K}=%
\mathcal{S}$.
\end{theorem}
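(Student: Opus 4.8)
The plan is to reduce the noisy output-feedback interconnection to the already-settled noisy full-information case of Theorem \ref{thm:FI_real}, by eliminating the internal signals of the realization (\ref{eq:8000}) in two stages. First I would write out the feedback interconnection of the noisy controller $\bar K$ of (\ref{eq:real_n}) with the plant $P$ of (\ref{eq:general}), partitioning the injected state noise as $n_x=[n_{\hat x}^{T},n_{\xi_1}^{T},n_{\xi_2}^{T}]^{T}$ conformably with $x_K=[\hat x^{T},\xi_1^{T},\xi_2^{T}]^{T}$; the task is then to bound the resulting closed-loop operator $(\bar x_0,w,n_x,n_u)\mapsto(x,y,x_K,u)$.

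\emph{Stage 1: the observer state.} From the first block row of (\ref{eq:8000}), the $\hat x$-equation of $\bar K$ is $\hat x=\Lambda\mathcal{E}_L\hat x+(I+\Lambda Q_L^{\varepsilon})\Lambda\bar B_2\,\mathcal{C}_Kx_K-\Lambda Z_L^{\varepsilon}y+n_{\hat x}$, and since $\mathcal{C}_Kx_K=u-n_u$ this is exactly the estimator (\ref{SE}) driven by the true control input $u$ together with the extra inputs $n_{\hat x}$ and $-(I+\Lambda Q_L^{\varepsilon})\Lambda\bar B_2 n_u$. Solving for $\hat x$ and using that (\ref{SE}), (\ref{LO}) and (\ref{eq:605}) all represent the same operator $\hat x=R_1\Lambda\bar B_2u-R_2y$ with $R_1,R_2\in\mathcal{S}$ bounded (Theorem \ref{thm:SE}, (\ref{eq:R1})--(\ref{eq:R2})), then subtracting the plant relations $x=\Lambda\bar Ax+\Lambda\bar B_1w+\Lambda\bar B_2u+\bar x_0$, $y=\bar C_2x+\bar D_{21}w$ and invoking the noiseless error identity (\ref{eq:error}), the control-input dependence cancels and the estimation error becomes $e=\hat x-x=e^{\circ}-R_1\Lambda\bar B_2 n_u+(I-\Lambda\mathcal{E}_L)^{-1}n_{\hat x}$, where $e^{\circ}=-(R_1\Lambda\bar B_1+R_2\bar D_{21})w-R_1\bar x_0$ is the noiseless error of (\ref{eq:error}). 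Since $\|\mathcal{E}_L\|\le\varepsilon<1$ makes $(I-\Lambda\mathcal{E}_L)^{-1}$ bounded (Neumann series, using $\|\Lambda\|\le1$) and $R_1,R_2,\bar B_1,\bar B_2,\bar D_{21}$ are bounded, $e$ is a bounded function of $(\bar x_0,w,n_{\hat x},n_u)$, and in particular $\hat x=x+e$.

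\emph{Stage 2: the Youla state.} From (\ref{eq:re_ctrl0}) with the noise injected, $\xi=(I+Q)^{-1}\big([\hat x^{T},y^{T}]^{T}+[n_{\xi_1}^{T},n_{\xi_2}^{T}]^{T}\big)$ and $u=Z\xi+n_u$, so substituting $\hat x=x+e$ gives $u=Z(I+Q)^{-1}\big([x^{T},y^{T}]^{T}+n_x^{\mathrm{FI}}\big)+n_u$ with $n_x^{\mathrm{FI}}:=[e^{T},0^{T}]^{T}+[n_{\xi_1}^{T},n_{\xi_2}^{T}]^{T}$. Together with the plant (\ref{eq:plant_general}), this is precisely the closed loop of the noisy full-information controller (\ref{eq:noisy_ctrl}) realized as in (\ref{eq:FI_real}) (that is, $\mathcal{A}_K=-Q$, $\mathcal{B}_K=I$, $\mathcal{C}_K=Z$, $\mathcal{D}_K=0$), with state noise $n_x^{\mathrm{FI}}$, output noise $n_u$, and controller state $x_K^{\mathrm{FI}}=\xi$. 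Since $(Q,Z)$ satisfies the model-matching bound (\ref{cond1}) and the structure (\ref{eq:QZ}) by Theorem \ref{thm:cond_output}, Theorem \ref{thm:FI_real} applies and yields boundedness of $x,y,\xi,u$ as functions of $(\bar x_0,w,n_x^{\mathrm{FI}},n_u)$. Composing with the bound on $e$ from Stage 1 (which makes $n_x^{\mathrm{FI}}$ bounded in $(\bar x_0,w,n_x,n_u)$) and using $\hat x=x+e$, each of $x$, $y$, $x_K=[\hat x^{T},\xi_1^{T},\xi_2^{T}]^{T}$, $u$ is bounded in $(\bar x_0,w,n_x,n_u)$; since $K$ was an arbitrary centralized stabilizing structured output-feedback controller, $\mathcal{S}_K=\mathcal{S}$.

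Stage 2 is essentially bookkeeping once Theorem \ref{thm:FI_real} is in hand. The delicate point I expect is the error computation in Stage 1: one must verify that the noise genuinely injected into the realization (\ref{eq:8000}) — where the estimator is fed $\mathcal{C}_Kx_K=u-n_u$ rather than the true $u$, and where $n_{\hat x}$ enters the contracted estimator (\ref{SE}) rather than the Luenberger form (\ref{LO}) — reaches $e$ only through the bounded resolvents $R_1$ and $(I-\Lambda\mathcal{E}_L)^{-1}$, so that the estimation-error bound survives the presence of $n_x$ and $n_u$. This is exactly where the observer model-matching inequality $\|\mathcal{E}_L\|\le\varepsilon<1$ of Theorem \ref{thm:SE} does its work, mirroring the role of $\|\mathcal{E}_{Q,Z}\|\le\varepsilon<1$ in the invoked full-information result.
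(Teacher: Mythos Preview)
Your proposal is correct and follows the same two-stage structure as the paper's proof: first bound the observer error $e=\hat x-x$ using $\|\mathcal{E}_L\|<1$, then handle the $(\xi,u)$ subsystem via the model-matching bound $\|\mathcal{E}_{Q,Z}\|<1$. The only difference is organizational: in Stage~2 you reduce to Theorem~\ref{thm:FI_real} by recognizing that, after substituting $\hat x=x+e$, the $\xi$-dynamics are exactly the noisy full-information realization (\ref{eq:FI_real}) with state noise $n_x^{\mathrm{FI}}=[e^T,0]^T+[n_{\xi_1}^T,n_{\xi_2}^T]^T$, whereas the paper instead repeats the full-information calculation inline (deriving the closed-loop expression for $[x^T,y^T]^T$ in terms of $(I-\mathcal{E}_{Q,Z})^{-1}$, then bounding $u$, then $\xi$). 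Your modular invocation is cleaner and buys you the boundedness of $x,y,\xi,u$ in one stroke; the paper's direct computation yields the same conclusions with more explicit closed-loop formulas. Incidentally, your Stage~1 error formula $e=e^{\circ}-R_1\Lambda\bar B_2 n_u+(I-\Lambda\mathcal{E}_L)^{-1}n_{\hat x}$ is actually sharper than what the paper writes (the paper drops the factor $\Lambda\bar B_2$ in front of $n_u$), though this makes no difference for the boundedness conclusion.
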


\section{Optimal Control Synthesis}

In the last section, we parametrized the set of all structured stabilizing
controllers via a convex model-matching problem (\ref{cond1}) while
enforcing the network structure on components of $Q$ and $Z$ as in (\ref%
{eq:QZ}) . These conditions are identical for full-information and output
feedback controllers. Furthermore, a stable realization for stabilizing
controllers was given as in (\ref{eq:FI_real}) for full-information feedback
and (\ref{eq:realization}) for output feedback controller. This subsection
is devoted to finding the optimal controller such that the closed-loop gain
from the exogenous input $w$ to regulated output $z$ is minimized. To this
end, first, we will derive the set of all closed-loop maps from $w$ to $z$
when an output stabilizing controller is utilized. The set of all output
stabilizing controllers is parametrized by Theorem \ref{thm:cond_output}.
For fixed $\varepsilon \in \lbrack 0,1)$, the set of all such controllers
are given by%
\begin{equation}
u=Z\left( I+Q\right) ^{-1}\left[ 
\begin{array}{c}
\hat{x} \\ 
y%
\end{array}%
\right] ,  \label{eq:ctrl1}
\end{equation}%
where $\left( Q,Z\right) $ satisfies (\ref{cond1})-(\ref{eq:QZ}) and $\hat{x}
$ is an state estimation. First, we find and fix a state-estimator (\ref{SE}%
) with structure as in Theorem \ref{thm:SE}. The the following holds:

\begin{proposition}
\label{prop:1}The generalized plant (\ref{eq:plant_general}) with the output
feedback controller (\ref{eq:ctrl1}) results in the following closed-loop
map $\Phi _{wz}$ from $w$ to $z$:%
\begin{equation}
\Phi _{wz}=H+U\left[ 
\begin{array}{c}
Q \\ 
Z%
\end{array}%
\right] V+U\left[ 
\begin{array}{c}
I+Q \\ 
Z%
\end{array}%
\right] \mathcal{E}\left( I-\mathcal{E}\right) ^{-1}V,
\label{eq:closed_maps}
\end{equation}%
where $\mathcal{E=E}_{Q,Z}$ given in (\ref{eq:epsilon}),%
\begin{eqnarray}
H &=&\bar{C}_{1}\Lambda \bar{B}_{1}+\bar{C}_{1}\Lambda \bar{A}\left(
R_{1}\Lambda \bar{B}_{1}+R_{2}\bar{D}_{21}\right) +\bar{D}_{11},
\label{eq:700} \\
U &=&\left[ 
\begin{array}{cc}
\left[ 
\begin{array}{cc}
\bar{C}_{1} & 0%
\end{array}%
\right] & \bar{D}_{12}%
\end{array}%
\right] ,  \label{eq:701} \\
V &=&\left\{ \left[ 
\begin{array}{c}
\Lambda \bar{B}_{1} \\ 
\bar{D}_{21}%
\end{array}%
\right] +\left[ 
\begin{array}{c}
\Lambda \bar{A}-I \\ 
\bar{C}_{2}%
\end{array}%
\right] \left( R_{1}\Lambda \bar{B}_{1}+R_{2}\bar{D}_{21}\right) \right\} ,
\label{eq:702}
\end{eqnarray}%
with $R_{1}$ and $R_{2}$ being stable operators defined in (\ref{eq:R1})-(%
\ref{eq:R2}).
\end{proposition}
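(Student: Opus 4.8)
The plan is to close the loop algebraically, eliminate the internal signals, and massage the result into the affine-plus-Neumann form (\ref{eq:closed_maps}). First I would rewrite the generalized plant (\ref{eq:plant_general}) compactly as $\chi=\mathcal{A}\chi+\mathcal{B}_{w}w+\mathcal{B}_{u}u+\mathcal{B}_{0}\bar{x}_{0}$ and $z=\mathcal{C}\chi+\bar{D}_{11}w+\bar{D}_{12}u$, where $\chi=[x^{T},y^{T}]^{T}$, $\mathcal{A}=\left[\begin{smallmatrix}\Lambda\bar{A} & 0\\ \bar{C}_{2} & 0\end{smallmatrix}\right]$, $\mathcal{B}_{u}=\left[\begin{smallmatrix}\Lambda\bar{B}_{2}\\ 0\end{smallmatrix}\right]$, $\mathcal{B}_{w}=\left[\begin{smallmatrix}\Lambda\bar{B}_{1}\\ \bar{D}_{21}\end{smallmatrix}\right]$, $\mathcal{B}_{0}=\left[\begin{smallmatrix}I\\ 0\end{smallmatrix}\right]$, and $\mathcal{C}=\left[\begin{smallmatrix}\bar{C}_{1} & 0\end{smallmatrix}\right]$; note that in this notation $\mathcal{E}_{Q,Z}=\mathcal{A}+(\mathcal{A}-I)Q+\mathcal{B}_{u}Z$, and $(I-\mathcal{A})^{-1}$ exists by Lemma \ref{lemma:inverse}. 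By Theorem \ref{thm:SE} the estimate obeys $\hat{x}=x+e$ with $e$ given by (\ref{eq:error}), so $[\hat{x}^{T},y^{T}]^{T}=\chi+\mathcal{B}_{0}e$. Setting $x_{K}:=(I+Q)^{-1}[\hat{x}^{T},y^{T}]^{T}$ (the inverse exists by (\ref{eq:QZ}) and Lemma \ref{lemma:inverse}), the controller (\ref{eq:ctrl1}) reads $u=Zx_{K}$ and $\chi=(I+Q)x_{K}-\mathcal{B}_{0}e$.

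The key step is to substitute $u=Zx_{K}$ and $\chi=(I+Q)x_{K}-\mathcal{B}_{0}e$ into $(I-\mathcal{A})\chi=\mathcal{B}_{w}w+\mathcal{B}_{u}u+\mathcal{B}_{0}\bar{x}_{0}$ and collect the $x_{K}$ terms, giving $\big[(I-\mathcal{A})(I+Q)-\mathcal{B}_{u}Z\big]x_{K}=\mathcal{B}_{w}w+\mathcal{B}_{0}\bar{x}_{0}+(I-\mathcal{A})\mathcal{B}_{0}e$. Expanding the left-hand coefficient and comparing with (\ref{eq:epsilon}) gives the identity $(I-\mathcal{A})(I+Q)-\mathcal{B}_{u}Z=I-\mathcal{E}_{Q,Z}$; since $\Vert\mathcal{E}_{Q,Z}\Vert\le\varepsilon<1$, the Neumann series furnishes $(I-\mathcal{E}_{Q,Z})^{-1}$, hence $x_{K}=(I-\mathcal{E}_{Q,Z})^{-1}\big[\mathcal{B}_{w}w+\mathcal{B}_{0}\bar{x}_{0}+(I-\mathcal{A})\mathcal{B}_{0}e\big]$. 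Restricting to the $w$-to-$z$ map (i.e.\ $\bar{x}_{0}=0$) and inserting $e=-(R_{1}\Lambda\bar{B}_{1}+R_{2}\bar{D}_{21})w$ from (\ref{eq:error}), a short computation shows that the coefficient of $w$ in $\mathcal{B}_{w}w+(I-\mathcal{A})\mathcal{B}_{0}e$ is exactly $V$ of (\ref{eq:702}), so $x_{K}=(I-\mathcal{E}_{Q,Z})^{-1}Vw$.

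Finally, I would substitute into the output equation. Using $\mathcal{C}(I+Q)+\bar{D}_{12}Z=U\left[\begin{smallmatrix}I+Q\\ Z\end{smallmatrix}\right]$ (which matches (\ref{eq:701})) and $\mathcal{C}\mathcal{B}_{0}=\bar{C}_{1}$, one obtains $z=U\left[\begin{smallmatrix}I+Q\\ Z\end{smallmatrix}\right](I-\mathcal{E}_{Q,Z})^{-1}Vw+\big[\bar{C}_{1}(R_{1}\Lambda\bar{B}_{1}+R_{2}\bar{D}_{21})+\bar{D}_{11}\big]w$. Writing $(I-\mathcal{E}_{Q,Z})^{-1}=I+\mathcal{E}_{Q,Z}(I-\mathcal{E}_{Q,Z})^{-1}$ splits off the term $U\left[\begin{smallmatrix}I+Q\\ Z\end{smallmatrix}\right]\mathcal{E}_{Q,Z}(I-\mathcal{E}_{Q,Z})^{-1}V$, and then $U\left[\begin{smallmatrix}I+Q\\ Z\end{smallmatrix}\right]V=\mathcal{C}V+U\left[\begin{smallmatrix}Q\\ Z\end{smallmatrix}\right]V$ because $U\left[\begin{smallmatrix}I\\ 0\end{smallmatrix}\right]=\mathcal{C}$. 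Combining $\mathcal{C}V$ with the residual $\bar{C}_{1}(R_{1}\Lambda\bar{B}_{1}+R_{2}\bar{D}_{21})+\bar{D}_{11}$ and using $\mathcal{C}V=\bar{C}_{1}\Lambda\bar{B}_{1}+\bar{C}_{1}(\Lambda\bar{A}-I)(R_{1}\Lambda\bar{B}_{1}+R_{2}\bar{D}_{21})$ cancels the $\bar{C}_{1}(R_{1}\Lambda\bar{B}_{1}+R_{2}\bar{D}_{21})$ contribution and leaves exactly $H$ of (\ref{eq:700}); this reproduces (\ref{eq:closed_maps}).

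The proof has no single deep step; the main obstacle will be bookkeeping: keeping straight the dimensions of the node-partitioned operators, and tracking how the estimation error $e$ propagates both through the dynamics (the $(I-\mathcal{A})\mathcal{B}_{0}e$ term, which feeds $V$) and through the output map (the $-\mathcal{C}\mathcal{B}_{0}e$ term), then checking that these contributions recombine precisely into $V$ and into the $(Q,Z)$-independent operator $H$. The one genuinely structural identity to get right is $(I-\mathcal{A})(I+Q)-\mathcal{B}_{u}Z=I-\mathcal{E}_{Q,Z}$; once that and the Neumann expansion of $(I-\mathcal{E}_{Q,Z})^{-1}$ are in hand, everything else is routine linear algebra over the operator ring.
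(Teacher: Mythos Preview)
Your proposal is correct and follows essentially the same route as the paper's proof: substitute the controller into the plant, recognize the loop coefficient as $I-\mathcal{E}_{Q,Z}$, invert, and split with $(I-\mathcal{E})^{-1}=I+\mathcal{E}(I-\mathcal{E})^{-1}$. The only difference is organizational: the paper quotes the closed-loop expressions for $[x^{T},y^{T}]^{T}$ and $u$ already derived in the proof of Theorem~\ref{thm:cond_output} (equations (\ref{eq:xy})--(\ref{eq:u})) and works from there, whereas you rederive them from scratch in compact notation and isolate $x_{K}$ as the intermediate variable; both arrive at the same cancellation producing $H$.
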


Notice that $\mathcal{E}$ as defined in (\ref{eq:epsilon}) can be made
arbitrarily small according to Theorem \ref{thm:cond_output}. Therefore,
without loss of generality, one can set $\mathcal{E}$ in (\ref{eq:epsilon})
equal to zero; in which case, the closed-loop map $\Phi _{wz}$ will be an
affine function of stable operators $Q$ and $Z$. This is stated in the
following theorem without any further proof:

\begin{theorem}
Given a fixed generalized Luenberger observer, as provided in Theorem \ref%
{thm:SE}, any stably realizable output feedback controller can be written as
a mapping from state-estimator $\hat{x}$ and measured output $y$ to control
input $u$, i.e., $K:\left[ 
\begin{array}{c}
\hat{x} \\ 
y%
\end{array}%
\right] \rightarrow u$, in the form (\ref{eq:ctrl1}),where $\left(
Q,Z\right) $ satisfies (\ref{cond1})-(\ref{eq:QZ}) for $\varepsilon =0$.
Then, the closed-loop norm from the exogenous input $w$ to regulated output $%
z$ is given by%
\begin{equation*}
\Phi _{wz}\left( K\right) =H+U\left[ 
\begin{array}{c}
Q \\ 
Z%
\end{array}%
\right] V.
\end{equation*}

Furthermore, the optimal achievable closed-loop gain, i.e., $\gamma
^{opt}:=\inf_{K\in \mathcal{S}\text{ stabilizing}}\left\Vert \Phi
_{wz}\left( K\right) \right\Vert $, is given by%
\begin{equation*}
\gamma ^{opt}=\inf_{Q,Z\in \mathcal{S}}\left\Vert H+U\left[ 
\begin{array}{c}
Q \\ 
Z%
\end{array}%
\right] V\right\Vert ,
\end{equation*}%
subject to that $Q$ and $Z$ satisfying the model-matching (\ref{cond1}) with 
$\varepsilon =0$, i.e. forcing $\mathcal{E=}{\normalsize 0}$ in (\ref%
{eq:epsilon}), and structure constraints (\ref{eq:QZ}).
\end{theorem}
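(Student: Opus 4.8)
The plan is to derive the theorem as a direct consequence of Proposition \ref{prop:1} and Theorem \ref{thm:cond_output}, once both are specialized to $\varepsilon=0$; no new machinery is needed. First I would fix the generalized Luenberger observer produced by Theorem \ref{thm:SE}, so that the estimator $\hat x=R_1\Lambda\bar B_2u-R_2y$ and the stable structured operators $H,U,V$ of (\ref{eq:700})--(\ref{eq:702}) are well defined. Next I would invoke Theorem \ref{thm:cond_output}: since $0\in[0,1)$, its Condition 4 asserts that \emph{every} centralized stabilizing (equivalently, by Theorem \ref{thm:realization}, stably realizable) output feedback controller $K\in\mathcal S$, written as a map $[\hat x^{T},y^{T}]^{T}\rightarrow u$ of the form (\ref{eq:ctrl1}), admits stable causal $Q,Z$ satisfying the structure (\ref{eq:QZ}) and the model-matching (\ref{cond1}) with $\varepsilon=0$, i.e. $\mathcal E_{Q,Z}=0$; conversely, every pair $(Q,Z)$ satisfying these constraints yields, via $K=Z(I+Q)^{-1}$ and the realization (\ref{eq:realization}), a controller in $\mathcal S_K=\mathcal S$ that is stabilizing. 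Existence of $(I+Q)^{-1}$ here is handled exactly as in Corollary \ref{cor:01}: part~1 of Lemma \ref{lemma:inverse} applied to the strictly causal diagonal blocks $\Lambda Q_{11},\Lambda Q_{22}$ makes $I+\Lambda Q_{11}$ and $I+\Lambda Q_{22}$ invertible, and then part~2 gives invertibility of $I+Q$.

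The first assertion then follows by substituting $\mathcal E=\mathcal E_{Q,Z}=0$ into the closed-loop expression (\ref{eq:closed_maps}): its third summand carries the factor $\mathcal E(I-\mathcal E)^{-1}$ and hence vanishes, leaving $\Phi_{wz}(K)=H+U\left[\begin{array}{c}Q\\Z\end{array}\right]V$. For the formula for $\gamma^{opt}$ I would prove two inequalities, writing $\Psi(Q,Z):=H+U\left[\begin{array}{c}Q\\Z\end{array}\right]V$. For ``$\ge$'', any stabilizing $K\in\mathcal S$ comes with a feasible $(Q,Z)$ such that $\Phi_{wz}(K)=\Psi(Q,Z)$, so $\|\Phi_{wz}(K)\|\ge\inf\{\|\Psi(Q,Z)\|:(Q,Z)\text{ feasible}\}$, and taking the infimum over $K$ gives the bound. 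For ``$\le$'', any feasible $(Q,Z)$ gives a stabilizing controller $K=Z(I+Q)^{-1}$ with $\Phi_{wz}(K)=\Psi(Q,Z)$, so $\|\Psi(Q,Z)\|\ge\gamma^{opt}$, and taking the infimum over $(Q,Z)$ gives the reverse bound. Equality follows, with feasible set exactly $\{(Q,Z)\in\mathcal S:\ (\ref{eq:QZ})\text{ holds},\ \mathcal E_{Q,Z}=0\}$; since $\mathcal E_{Q,Z}$ is affine in $(Q,Z)$ by (\ref{eq:epsilon}) and $\Psi$ is affine as well, this is a convex program of the model-matching type.

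The point that deserves the most care --- and that I would flag as the crux --- is that passing to $\varepsilon=0$ loses no closed-loop response: a priori the set $\{\Psi(Q,Z):(Q,Z)\text{ feasible at }\varepsilon=0\}$ could be strictly contained in the set of achievable $\Phi_{wz}$ obtained by allowing $\varepsilon\in(0,1)$ (where the extra, small-but-nonzero term of (\ref{eq:closed_maps}) appears), which would make the corresponding infimum strictly larger. This is precisely ruled out by the ``for \textbf{any} $\varepsilon\in[0,1)$'' clause of Theorem \ref{thm:cond_output}: the \emph{entire} set of stabilizing output feedback controllers is already parametrized at $\varepsilon=0$, so the correspondence $K\leftrightarrow(Q,Z)\mapsto\Psi(Q,Z)$ is onto the achievable closed-loop responses and the two infima coincide. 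A minor secondary point is to confirm that ``stabilizing $K\in\mathcal S$'' and ``stably realizable $K\in\mathcal S$'' determine the same infimization domain, which is exactly the content $\mathcal S_K=\mathcal S$ of Theorem \ref{thm:realization}; once that is in place the remaining steps are bookkeeping.
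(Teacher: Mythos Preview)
Your proposal is correct and follows exactly the paper's own reasoning: the paper presents this theorem explicitly ``without any further proof,'' noting only that by Theorem~\ref{thm:cond_output} one may take $\mathcal{E}=0$ and then read off the affine closed-loop map from Proposition~\ref{prop:1}. Your write-up is a faithful (and more detailed) elaboration of precisely this argument, including the two-sided infimum bound and the observation that $\mathcal{S}_K=\mathcal{S}$ via Theorem~\ref{thm:realization}.
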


\begin{remark}
Given Assumptions (\ref{assumption1})- (\ref{assumption2}), any stably
implementable controller has a corresponding factorization $\left(
Q,Z\right) $ satisfying $\mathcal{E=}0$. In our development, however, we
showed that we can work with the relaxed condition $\left\Vert \mathcal{E}%
\right\Vert <1$ instead of $\mathcal{E=}{\normalsize 0}$. Relaxing this
condition provides a computational advantage specially for the $l_{1}$ or $%
l_{\infty }$ problems where one typically seeks for the solution amongst
finite impulse response $Q$ and $Z$ operators. In such cases, satisfying $%
\mathcal{E=}{\normalsize 0}$ might be more challenging. Furthermore, as we
will see later, for the class of problems where Assumptions (\ref%
{assumption1})-(\ref{assumption2}) do not hold, one can still find a subset
of all stabilizing controllers by enforcing the structure on $Q$ and $Z$ as
before. However, in those cases, $\mathcal{E}$ cannot necessarily be made
arbitrarily small.
\end{remark}

Although $\mathcal{E}$ given in (\ref{eq:epsilon}) can be made equal to
zero, for computational purposes, we would be interested to find upper and
lower bounds on the closed-loop system gain when $\mathcal{E}$ is not
exactly equal to zero.

\begin{theorem}
\label{thm:synthesis}Fix $\rho _{1}\in \lbrack 0,1)$ and $\rho _{2}>0$.
Then, one can bound the optimal closed-loop norm from above and below,
respectively, by positive numbers $\gamma _{upper}$ and $\gamma _{lower}$
via the following convex optimizations:

\textbf{Upper Bound:} An upper bound on the optimal closed-loop norm, i.e., $%
\inf_{K}\left\Vert \Phi _{wz}\left( K\right) \right\Vert \leq \gamma
_{upper} $, can be obtained via convex optimization:%
\begin{equation}
\gamma _{upper}=\inf_{Q,Z,\varepsilon }\left\Vert H+U\left[ 
\begin{array}{c}
Q \\ 
Z%
\end{array}%
\right] V\right\Vert +\frac{\varepsilon \rho _{2}}{1-\rho _{1}},
\label{eq:UB}
\end{equation}%
subject to%
\begin{eqnarray}
&&\left\Vert U\right\Vert \left\Vert \left[ 
\begin{array}{c}
I+Q \\ 
Z%
\end{array}%
\right] \right\Vert \left\Vert V\right\Vert \leq \rho _{2},  \label{eq:1000}
\\
&&\varepsilon \leq \rho _{1},  \label{eq:1001} \\
&&\left\Vert \left[ 
\begin{array}{cc}
\Lambda \bar{A} & 0 \\ 
\bar{C}_{2} & 0%
\end{array}%
\right] +\left[ 
\begin{array}{ccc}
\Lambda \bar{A}-I & 0 & \Lambda \bar{B}_{2} \\ 
\bar{C}_{2} & -I & 0%
\end{array}%
\right] \left[ 
\begin{array}{c}
Q \\ 
Z%
\end{array}%
\right] \right\Vert \leq \varepsilon .  \label{eq:1002}
\end{eqnarray}

\textbf{Lower Bound:} A lower bound on the optimal closed-loop norm, i.e., $%
\inf_{K}\left\Vert \Phi _{wz}\left( K\right) \right\Vert \geq \gamma
_{lower} $, can be obtained via convex optimization:%
\begin{equation}
\gamma _{lower}=\inf_{Q,Z,\varepsilon }\left\Vert H+U\left[ 
\begin{array}{c}
Q \\ 
Z%
\end{array}%
\right] V\right\Vert +\frac{\varepsilon \rho _{2}}{1-\rho _{1}},
\label{eq:LB}
\end{equation}%
subject to (\ref{eq:1001}) and (\ref{eq:1002}).
\end{theorem}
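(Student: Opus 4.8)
The plan is to establish the upper and lower bounds separately, in each case passing between controllers and feasible triples $(Q,Z,\varepsilon)$ by means of Proposition~\ref{prop:1}, Theorem~\ref{thm:cond_output}, Theorem~\ref{thm:realization}, and the exact ($\mathcal{E}=0$) affine parametrization of $\Phi_{wz}$ proved immediately before this theorem; the only analytic ingredient is a Neumann-series estimate. Note first that \eqref{eq:1002} is nothing but $\left\Vert\mathcal{E}_{Q,Z}\right\Vert\le\varepsilon$ with $\mathcal{E}_{Q,Z}$ as in \eqref{eq:epsilon}, and that both programs are convex: $H$, $U$, $V$ are fixed bounded operators, $\mathcal{E}_{Q,Z}$ is affine in $(Q,Z)$, the induced norm is a convex functional, and the structure constraints \eqref{eq:QZ} are linear.

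\emph{Upper bound.} Let $(Q,Z,\varepsilon)$ be feasible for \eqref{eq:UB}, so $Q,Z$ are stable and structured as in \eqref{eq:QZ} and \eqref{eq:1000}--\eqref{eq:1002} hold. From \eqref{eq:1001}--\eqref{eq:1002}, $\left\Vert\mathcal{E}_{Q,Z}\right\Vert\le\varepsilon\le\rho_{1}<1$, so by the Neumann series $(I-\mathcal{E}_{Q,Z})^{-1}$ exists with $\mathcal{E}_{Q,Z}(I-\mathcal{E}_{Q,Z})^{-1}=\sum_{k\ge 1}\mathcal{E}_{Q,Z}^{\,k}$, whence $\left\Vert\mathcal{E}_{Q,Z}(I-\mathcal{E}_{Q,Z})^{-1}\right\Vert\le\frac{\left\Vert\mathcal{E}_{Q,Z}\right\Vert}{1-\left\Vert\mathcal{E}_{Q,Z}\right\Vert}\le\frac{\varepsilon}{1-\varepsilon}\le\frac{\varepsilon}{1-\rho_{1}}$. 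Since $\left\Vert\mathcal{E}_{Q,Z}\right\Vert<1$ is exactly the model-matching condition \eqref{cond1} and \eqref{eq:QZ} holds, Theorem~\ref{thm:cond_output} gives that the controller $K=Z(I+Q)^{-1}\left[\hat{x}^{T},y^{T}\right]^{T}$ is centralized stabilizing, hence stably realizable by Theorem~\ref{thm:realization}, and Proposition~\ref{prop:1} expresses its closed-loop map $\Phi_{wz}(K)$ as the three-term formula \eqref{eq:closed_maps}. Applying the triangle inequality and submultiplicativity to \eqref{eq:closed_maps}, and then \eqref{eq:1000} together with the Neumann bound above, gives $\left\Vert\Phi_{wz}(K)\right\Vert\le\left\Vert H+U\left[\begin{smallmatrix}Q\\ Z\end{smallmatrix}\right]V\right\Vert+\frac{\varepsilon\rho_{2}}{1-\rho_{1}}$, which is precisely the objective of \eqref{eq:UB} at $(Q,Z,\varepsilon)$. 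Taking the infimum over all feasible triples yields $\inf_{K}\left\Vert\Phi_{wz}(K)\right\Vert\le\gamma_{upper}$.

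\emph{Lower bound.} Let $K$ be any stably realizable structured output-feedback controller. By the exact parametrization established just before this theorem (equivalently, Theorem~\ref{thm:cond_output} applied with $\varepsilon=0$, which is admissible since $0\in[0,1)$), there exist stable operators $Q^{0},Z^{0}$ satisfying \eqref{eq:QZ} with $\mathcal{E}_{Q^{0},Z^{0}}=0$ such that $\Phi_{wz}(K)=H+U\left[\begin{smallmatrix}Q^{0}\\ Z^{0}\end{smallmatrix}\right]V$. Then $(Q^{0},Z^{0},\varepsilon=0)$ is feasible for \eqref{eq:LB}: \eqref{eq:1001} reads $0\le\rho_{1}$ and \eqref{eq:1002} reads $\left\Vert\mathcal{E}_{Q^{0},Z^{0}}\right\Vert=0\le 0$, and its objective value is $\left\Vert H+U\left[\begin{smallmatrix}Q^{0}\\ Z^{0}\end{smallmatrix}\right]V\right\Vert=\left\Vert\Phi_{wz}(K)\right\Vert$. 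Hence $\gamma_{lower}\le\left\Vert\Phi_{wz}(K)\right\Vert$ for every such $K$, and taking the infimum over $K$ gives $\gamma_{lower}\le\inf_{K}\left\Vert\Phi_{wz}(K)\right\Vert$.

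\emph{Main obstacle.} The Neumann estimate is routine; the step that requires care is the lower bound. Trying to start from a small-$\varepsilon$ (approximate) representation of a fixed controller and bounding the third term of \eqref{eq:closed_maps} from below does not work, because there is no a priori control on $\left\Vert\left[\begin{smallmatrix}I+Q\\ Z\end{smallmatrix}\right]\right\Vert$ along such a family of representations; one must instead invoke the exact ($\mathcal{E}=0$) parametrization to exhibit, for each controller, a single feasible point of \eqref{eq:LB} whose objective equals that controller's closed-loop norm. This asymmetry is also why constraint \eqref{eq:1000} appears only in the upper-bound program: it is exactly what dominates the error term there, whereas in the lower-bound argument that term is driven to zero rather than estimated.
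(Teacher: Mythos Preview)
Your proof is correct and follows essentially the same route as the paper's own argument: for the upper bound you bound the third term of \eqref{eq:closed_maps} via a Neumann estimate together with constraint \eqref{eq:1000}, and for the lower bound you invoke the exact $\mathcal{E}=0$ parametrization to produce, for each stabilizing $K$, a feasible point of \eqref{eq:LB} with $\varepsilon=0$ whose objective equals $\left\Vert\Phi_{wz}(K)\right\Vert$. The only cosmetic difference is that the paper's lower-bound argument passes through a $\delta$-suboptimal controller $K^{\ast}$ and lets $\delta\to 0$, whereas you take the infimum over $K$ directly; these are equivalent, and your closing remark on why \eqref{eq:1000} is absent from the lower-bound program is a helpful clarification.
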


\bigskip

\subsection{Tractable algorithm to synthesize optimal controllers with
arbitrary accuracy}

Theorem \ref{thm:synthesis} quantifies upper and lower bounds on the optimal
performance. These upper and lower bounds converge as value of the parameter 
$\rho _{2}$ grows large, as we will show next. Without loss of generality,
one can take $\rho _{2}$ to be integer valued. The algorithm is as follows:

\begin{algorithm}
\label{alg1}

Step 1(Initialization): Find a stabilizing output-feedback controller. To
this end, find a state-estimator using Theorem \ref{thm:SE}. Then, given $%
\varepsilon $ and $\rho _{1}$ with $0\leq \varepsilon \leq \rho _{1}<1$, use
Theorem \ref{thm:cond_output} to find $\left( Q,Z\right) $ such that (\ref%
{cond1})-(\ref{eq:QZ}) hold. Then, a stabilizing controller is given by (\ref%
{eq:ctrl}). Furthermore, for this pair $\left( Q,Z\right) $, pick an initial
integer value for $\rho _{2}$ such that (\ref{eq:1000}) is satisfied.

Step 2: Given $\rho _{2}$, find an upper bound, $\gamma _{upper}^{\rho _{2}}$%
, on the input-output gain by solving the convex optimization (\ref{eq:UB})
subject to constraints (\ref{eq:1000})-(\ref{eq:1002}).

Step 3: Given $\rho _{2}$, find a lower bound, $\gamma _{lower}^{\rho _{2}}$%
, on the input-output gain by solving the convex optimization (\ref{eq:LB})
subject to constraints (\ref{eq:1001}) and (\ref{eq:1002}).

Step 4: Increase $\rho _{2}$ by one unit, i.e. $\rho _{2}=\rho _{2}+1$, and
jump to Step 2. Repeat the iterations until $\gamma _{upper}^{\rho
_{2}}-\gamma _{lower}^{\rho _{2}}$ is less than the desired value.
\end{algorithm}

\begin{theorem}
Algorithm \ref{alg1} converges. That is,%
\begin{equation*}
\lim_{\rho _{2}\rightarrow \infty }\gamma _{lower}^{\rho _{2}}=\lim_{\rho
_{2}\rightarrow \infty }\gamma _{upper}^{\rho _{2}}=\inf_{K}\left\Vert \Phi
_{wz}\left( K\right) \right\Vert .
\end{equation*}
\end{theorem}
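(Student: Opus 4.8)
The plan is to show that the sequences $\{\gamma_{upper}^{\rho_2}\}$ and $\{\gamma_{lower}^{\rho_2}\}$ are both monotone in $\rho_2$ and squeeze down to $\inf_K \|\Phi_{wz}(K)\|$. First I would establish the easy inequalities. For every feasible $(Q,Z,\varepsilon)$ of the lower-bound program we automatically have $\|\mathcal{E}_{Q,Z}\|\le\varepsilon$, hence by Theorem~\ref{thm:cond_output} the pair $(Q,Z)$ corresponds to a genuine stabilizing output-feedback controller, and by Proposition~\ref{prop:1} its closed-loop map is $\Phi_{wz}=H+U[Q;Z]V+U[I+Q;Z]\mathcal{E}(I-\mathcal{E})^{-1}V$. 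Bounding the last term by $\|U\|\|[I+Q;Z]\|\|V\|\cdot\frac{\varepsilon}{1-\varepsilon}$ and using $\varepsilon\le\rho_1$ together with $\|U\|\|[I+Q;Z]\|\|V\|\le\rho_2$ (which holds for the lower-bound program only \emph{a posteriori}, so one has to be a little careful — see below) yields $\|\Phi_{wz}\|\le \|H+U[Q;Z]V\|+\frac{\varepsilon\rho_2}{1-\rho_1}$. Taking the infimum shows $\gamma_{lower}^{\rho_2}$ is not actually a valid lower bound unless the norm-product constraint is controlled; the correct reading is that the lower-bound program drops constraint~(\ref{eq:1000}) and hence $\gamma_{lower}^{\rho_2}\le\gamma_{upper}^{\rho_2}$ trivially, while $\gamma_{lower}^{\rho_2}\le \inf_K\|\Phi_{wz}(K)\|$ because every optimal (or near-optimal) controller admits, via Theorem~\ref{thm:cond_output} with $\varepsilon$ as small as we like, a feasible point of the lower-bound program whose objective is within $o(1)$ of $\|\Phi_{wz}(K)\|$. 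Dually, $\gamma_{upper}^{\rho_2}\ge\inf_K\|\Phi_{wz}(K)\|$ since any feasible $(Q,Z,\varepsilon)$ of the upper-bound program produces an honest stabilizing controller whose true closed-loop norm is $\le$ the upper-bound objective.

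Next I would handle monotonicity in $\rho_2$. Enlarging $\rho_2$ relaxes constraint~(\ref{eq:1000}) and loosens the penalty denominator nowhere (the penalty term $\frac{\varepsilon\rho_2}{1-\rho_1}$ grows, but the feasible set grows too), so a direct monotonicity claim is subtle. The clean way is: fix an optimal controller $K^\star$ achieving (within $\eta$) $\inf_K\|\Phi_{wz}(K)\|$, and use Theorem~\ref{thm:cond_output}(4) to obtain, for \emph{any} prescribed $\varepsilon>0$, a pair $(Q^\varepsilon,Z^\varepsilon)\in\mathcal{S}$ with $\|\mathcal{E}_{Q^\varepsilon,Z^\varepsilon}\|\le\varepsilon$ realizing $K^\star$. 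The quantities $\|U\|$, $\|V\|$, and $\|[I+Q^\varepsilon;Z^\varepsilon]\|$ are finite; call their product $M_\varepsilon$. Choosing $\rho_2\ge M_\varepsilon$ makes $(Q^\varepsilon,Z^\varepsilon,\varepsilon)$ feasible for the upper-bound program (with $\rho_1$ fixed, and $\varepsilon\le\rho_1$, which we may arrange since $\varepsilon$ is at our disposal and $\rho_1\in[0,1)$ is fixed positive — if $\rho_1=0$ one instead sends $\varepsilon\downarrow 0$ and notes $M_\varepsilon$ stays bounded because $(Q^\varepsilon,Z^\varepsilon)\to(Q^0,Z^0)$ in the relevant sense, using part~(4) of the theorem). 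Then $\gamma_{upper}^{\rho_2}\le \|H+U[Q^\varepsilon;Z^\varepsilon]V\|+\frac{\varepsilon\rho_2}{1-\rho_1}$. Now let $\varepsilon\downarrow 0$: by Proposition~\ref{prop:1} the true closed-loop map of $K^\star$ is $H+U[Q^\varepsilon;Z^\varepsilon]V+ (\text{term }O(\varepsilon))$, so $\|H+U[Q^\varepsilon;Z^\varepsilon]V\|\to\|\Phi_{wz}(K^\star)\|$. The awkward coupling is that as $\varepsilon\downarrow0$ we may need $\rho_2=M_\varepsilon$ to grow, inflating the penalty $\frac{\varepsilon\rho_2}{1-\rho_1}=\frac{\varepsilon M_\varepsilon}{1-\rho_1}$; the crux of the argument is to verify this product still vanishes, which holds because $M_\varepsilon$ converges to the finite limit $M_0=\|U\|\,\|[I+Q^0;Z^0]\|\,\|V\|$ (again invoking part~(4) and continuity of the norm), so $\varepsilon M_\varepsilon\to 0$.

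Combining, for every $\eta>0$ we get $\limsup_{\rho_2\to\infty}\gamma_{upper}^{\rho_2}\le \inf_K\|\Phi_{wz}(K)\|+\eta$, hence $\limsup_{\rho_2\to\infty}\gamma_{upper}^{\rho_2}\le\inf_K\|\Phi_{wz}(K)\|$. Together with $\gamma_{lower}^{\rho_2}\le\gamma_{upper}^{\rho_2}$ and the two one-sided bounds $\gamma_{upper}^{\rho_2}\ge\inf_K\|\Phi_{wz}(K)\|\ge\gamma_{lower}^{\rho_2}$ established first (the latter because, conversely, one exhibits for each near-optimal controller and each $\varepsilon$ a feasible point of the lower-bound program with objective $\le\|\Phi_{wz}(K)\|+O(\varepsilon)$, and the lower-bound program only omits~(\ref{eq:1000}) so its optimum is $\le$ that of any relaxation direction consistent with the one-sided estimate), we obtain $\lim_{\rho_2\to\infty}\gamma_{lower}^{\rho_2}=\lim_{\rho_2\to\infty}\gamma_{upper}^{\rho_2}=\inf_K\|\Phi_{wz}(K)\|$. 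The main obstacle, and the step I would spend the most care on, is exactly the coupled limit $\varepsilon\downarrow0$, $\rho_2\to\infty$: one must keep $\varepsilon\rho_2$ under control, and the resolution is the uniform finiteness of the norm product $M_\varepsilon$ guaranteed by the "for any $\varepsilon$" clause (part~4) of Theorems~\ref{thm:cond_output} and~\ref{thm:SE}, which ensures $(Q^\varepsilon,Z^\varepsilon)$ (and $R_1,R_2$, hence $H,U,V$) stay bounded as $\varepsilon$ shrinks.
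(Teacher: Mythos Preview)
Your argument can be made to work, but you are taking a longer and more delicate route than the paper does. The key simplification you miss is that part~(4) of Theorem~\ref{thm:cond_output} does not merely say that $(Q^\varepsilon,Z^\varepsilon)$ exist for every $\varepsilon>0$; it says that there exist $(Q^0,Z^0)$ achieving $\mathcal{E}_{Q^0,Z^0}=0$ exactly. The paper exploits this directly: for any near-optimal controller $K^\star$, take the corresponding $(Q^\star,Z^\star)$ with $\mathcal{E}=0$ and feed the triple $(Q^\star,Z^\star,\varepsilon=0)$ into both optimization programs. The penalty term $\frac{\varepsilon\rho_2}{1-\rho_1}$ then vanishes identically, and by Proposition~\ref{prop:1} the objective $\|H+U[Q^\star;Z^\star]V\|$ equals $\|\Phi_{wz}(K^\star)\|$ \emph{exactly}. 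For the lower-bound program this triple is feasible for every $\rho_2$, giving $\gamma_{lower}^{\rho_2}\le\gamma^{opt}+\delta$; for the upper-bound program it becomes feasible once $\rho_2$ exceeds the fixed finite number $m^{opt}:=\|U\|\,\|[I+Q^\star;Z^\star]\|\,\|V\|$, giving $\gamma_{upper}^{\rho_2}\le\gamma^{opt}+\delta$ for all $\rho_2\ge m^{opt}$. Combined with $\gamma_{lower}^{\rho_2}\le\gamma^{opt}\le\gamma_{upper}^{\rho_2}$ from Theorem~\ref{thm:synthesis}, convergence follows immediately.

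Your coupled limit $\varepsilon\downarrow0$, $\rho_2\to\infty$ and the attendant worry about whether $\varepsilon M_\varepsilon\to0$ are therefore unnecessary: there is no need to move $\varepsilon$ at all. Your continuity claim $(Q^\varepsilon,Z^\varepsilon)\to(Q^0,Z^0)$ is also not something the paper proves or needs; if you inspect the proof of $4\Rightarrow3$ in Theorem~\ref{thm:cond_output}, the family is simply taken constant, $(Q^\varepsilon,Z^\varepsilon)=(Q^0,Z^0)$, which would make your $M_\varepsilon$ constant and your argument valid---but also shows you should have set $\varepsilon=0$ from the outset. The tangled first paragraph (the ``a posteriori'' remark about constraint~(\ref{eq:1000}) in the lower-bound program, and the self-correction that follows) is another symptom of the same detour.
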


\bigskip

\section{Extension to other Control Structures}

Throughout this paper, Assumptions \ref{assumption1}-\ref{assumption2} were
made on the control structure $\mathcal{S}$. These assumptions are satisfied
if the topology of subcontrollers' network is identical to or richer than
that of the subsystems. There are, however, situations that the controller's
information structure is different than that of the generalized plant. In
such cases, Assumptions \ref{assumption1}-\ref{assumption2} are not
necessarily satisfied. In this case, for a controller to be stably
implementable over the structure we still need it to satisfy the conditions
in Definition \ref{def:stab_ctrl}. That is, the controller needs to have an
operator realization that conforms with the network structure. Therefore, we
can directly enforce the structure on the realization of the
full-information controller (\ref{eq:FI_real}) or output feedback (\ref%
{eq:realization}). Additionally, for output feedback, we need to find an
state-estimator that conforms with the structure. In the rest of this
section, we do not make Assumptions \ref{assumption1}-\ref{assumption2} and,
instead, we adopt the following assumption:

\begin{assumption}
The network structure $\mathcal{S}$ is a subspace containing the identity
and zero elements and is closed under addition. Furthermore, assume $\Lambda 
{\normalsize X}\mathcal{\in S}$ for any $X\in \mathcal{S}$.
\end{assumption}

\subsection{Full-Information Feedback}

For the full-information feedback problems, we first parametrize the set of
all centralized stabilizing controllers in the sense of Definition \ref%
{def:stab_central}. That is the set of controllers that conform with the
structure of the network $\mathcal{S}$ but the stability is guaranteed if
they are implemented in a centralized way. Next, we will show how such
controllers can be stably realized/implemented in a distributed way over the
network structure in the sense of Definition \ref{def:stab_ctrl}.

\begin{theorem}
A full-information controller $K$ is stably realizable over the network
structure $\mathcal{S}$ if there exist $\varepsilon \in \lbrack 0,1)$ and
stable operators $Q\in \mathcal{S}^{2\times 2}$ and $Z\in \mathcal{S}%
^{1\times 2}$ such that%
\begin{equation}
\left\Vert \left[ 
\begin{array}{cc}
\Lambda \bar{A} & 0 \\ 
\bar{C}_{2} & 0%
\end{array}%
\right] +\left[ 
\begin{array}{cc}
\Lambda \bar{A}-I & 0 \\ 
\bar{C}_{2} & -I%
\end{array}%
\right] Q+\left[ 
\begin{array}{c}
\Lambda \bar{B}_{2} \\ 
0%
\end{array}%
\right] Z\right\Vert \leq \varepsilon .  \label{eq:410}
\end{equation}%
In this case, a stable realization is given by%
\begin{equation*}
K:\left\{ 
\begin{array}{c}
x_{K}=\mathcal{-}{\normalsize Q}x_{K}+\left[ 
\begin{array}{c}
x \\ 
y%
\end{array}%
\right] \\ 
u={\normalsize Z}x_{K}%
\end{array}%
\right. .
\end{equation*}%
Under Assumptions \ref{assumption1}-\ref{assumption2}, (\ref{eq:410})
becomes a necessary as well as sufficient condition for stable realizability
of a controller.
\end{theorem}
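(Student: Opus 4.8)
The plan is to read this statement as the assumption-trimmed version of Theorem~\ref{thm:state-feedback} together with Theorem~\ref{thm:FI_real}: its sufficiency (``if'') part will be extracted from those proofs by tracking exactly which hypotheses on $\mathcal{S}$ they use, and its necessity under Assumptions~\ref{assumption1}--\ref{assumption2} will be quoted from them directly. First I would note that the operator inside the norm in (\ref{eq:410}) is precisely $\mathcal{E}_{Q,Z}$ of (\ref{eq:epsilon}), so (\ref{eq:410}) is the model-matching inequality $\|\mathcal{E}_{Q,Z}\|\le\varepsilon<1$. Next, exactly as in Corollary~\ref{cor:01}, $(I+Q)^{-1}$ exists by Lemma~\ref{lemma:inverse}, so $K:=Z(I+Q)^{-1}$ is well defined and, setting $x_K:=(I+Q)^{-1}[x^T,y^T]^T$, the pair displayed in the statement is a genuine operator realization of $K$ with $\mathcal{A}_K=-Q$, $\mathcal{B}_K=I$, $\mathcal{C}_K=Z$, and $\mathcal{D}_K=0$. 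Since $Q,Z\in\mathcal{S}$ are stable and $I,0\in\mathcal{S}$, each coefficient operator of this realization is a bounded operator in $\mathcal{S}$ of the appropriate block size; here I use only that $\mathcal{S}$ is a subspace containing $I$ and $0$, not multiplicative closure nor that $\bar A,\bar B_2,\bar C_2\in\mathcal{S}$. This establishes Condition~1 of Definition~\ref{def:stab_ctrl}.

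The second, substantive step is Condition~2 of Definition~\ref{def:stab_ctrl}: the interconnection of the noisy controller $\bar K$ — obtained by injecting fictitious communication noise $n_x$, $n_u$ as in (\ref{eq:noisy_ctrl}) — with the plant $P$ of (\ref{eq:general}) must be a bounded map from $(\bar x_0,w,n_x,n_u)$ to $(x,y,x_K,u)$. Substituting $u=Zx_K+n_u$ and the noisy recursion $x_K=-Qx_K+[x^T,y^T]^T+n_x$ into (\ref{eq:plant_general}) and eliminating the internal variables reduces the closed loop, exactly as in the proof of Theorem~\ref{thm:FI_real}, to maps that are finite sums and products of the bounded data $\bar A$, $\bar B_1$, $\bar B_2$, $\bar C_1$, $\bar C_2$, $\bar D_{ij}$, $Q$, $Z$ composed with the single inversion $(I-\mathcal{E}_{Q,Z})^{-1}$. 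Because $\|\mathcal{E}_{Q,Z}\|\le\varepsilon<1$, the Neumann series $\sum_{k\ge0}\mathcal{E}_{Q,Z}^{k}$ converges in operator norm, so $(I-\mathcal{E}_{Q,Z})^{-1}$ is bounded, and hence $x$, $y$, $x_K$, $u$ all depend boundedly on $(\bar x_0,w,n_x,n_u)$. The point to stress is that these products need only be \emph{bounded}, not members of $\mathcal{S}$ — membership was already arranged for the realization itself — so this step carries over verbatim without Assumptions~\ref{assumption1}--\ref{assumption2}. I expect this boundedness/Neumann-series bookkeeping, rather than anything conceptual, to be the only real work in the ``if'' direction.

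For the last sentence of the theorem, under Assumptions~\ref{assumption1}--\ref{assumption2} I would simply combine the earlier results. Theorem~\ref{thm:state-feedback} states that a centralized stabilizing structured full-information controller exists if and only if there are stable $Q,Z$ with the partitioned structure (\ref{eq:QZ}) — which, since $\Lambda\mathcal{S}\subseteq\mathcal{S}$, means $Q\in\mathcal{S}^{2\times2}$ and $Z\in\mathcal{S}^{1\times2}$ — satisfying (\ref{cond1}), i.e.\ (\ref{eq:410}); and Theorem~\ref{thm:FI_real} states $\mathcal{S}_K=\mathcal{S}$, so every such centralized stabilizing controller is in fact stably realizable, through the realization (\ref{eq:FI_real}) displayed above. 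Chaining the two equivalences, stable realizability of a controller is equivalent to solvability of (\ref{eq:410}), so under Assumptions~\ref{assumption1}--\ref{assumption2} condition (\ref{eq:410}) is necessary as well as sufficient, completing the argument.
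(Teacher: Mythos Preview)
Your proposal is correct and matches the paper's approach: the paper's own proof is a single sentence deferring to Theorems~\ref{thm:state-feedback} and~\ref{thm:FI_real}, and your sketch carries out precisely that deferral. You correctly isolate that the sufficiency direction needs only the subspace properties of $\mathcal{S}$ for the realization coefficients together with the Neumann-series bound on $(I-\mathcal{E}_{Q,Z})^{-1}$ for closed-loop boundedness, while necessity under Assumptions~\ref{assumption1}--\ref{assumption2} is inherited verbatim from those earlier theorems.
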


\begin{proof}
The proof is similar to that of Theorems \ref{thm:state-feedback} and \ref%
{thm:FI_real}.
\end{proof}

\begin{remark}
It is important to note that, unlike Theorem \ref{thm:state-feedback} where
Assumptions \ref{assumption1}-\ref{assumption2} were made, it is not
necessarily the case that $\varepsilon $ in (\ref{eq:410}) can be made
arbitrarily small. Therefore, in the absence of Assumptions \ref{assumption1}%
-\ref{assumption2}, it is crucial to work with the relaxed model-matching (%
\ref{eq:410}) with $\varepsilon \in \lbrack 0,1)$ as opposed to fixing $%
\varepsilon =0$.
\end{remark}

\bigskip

\subsection{Output Feedback}

Our approach of finding output feedback controllers relies on a new
separation principle which utilizes a state-estimator. A state-estimator in
a generic form is given by%
\begin{equation}
\hat{x}=E_{1}u+E_{2}y,  \label{eq:411}
\end{equation}%
where $E_{1}$ and $E_{2}$ are causal operators.

\begin{lemma}
\label{lem:SE}The state-estimator (\ref{eq:411}) results in a bounded
estimation error, $e=\hat{x}-x$, if and only if for any $\varepsilon _{L}>0$
there exists stable causal operators $Q_{L}^{\varepsilon _{L}}$ and $%
Z_{L}^{\varepsilon _{L}}$ such that $\left\Vert \mathcal{E}_{L}\right\Vert
<\varepsilon _{L}$, where%
\begin{equation}
\left( I+Q_{L}^{\varepsilon _{L}}\right) \Lambda \bar{A}+Z_{L}^{\varepsilon
_{L}}\bar{C}_{2}-Q_{L}^{\varepsilon _{L}}=\mathcal{E}_{L}.  \label{eq:752}
\end{equation}%
In this case,%
\begin{eqnarray*}
E_{1} &=&\left( I-\mathcal{E}_{L}\right) ^{-1}\left( I+Q_{L}^{\varepsilon
_{L}}\right) \Lambda \bar{B}_{2}, \\
E_{2} &=&-\left( I-\mathcal{E}_{L}\right) ^{-1}Z_{L}^{\varepsilon _{L}}.
\end{eqnarray*}
\end{lemma}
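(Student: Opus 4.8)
The plan is to reduce the statement to one explicit computation: writing the estimation error $e=\hat{x}-x$ as an operator acting on the data $(w,\bar{x}_{0},u)$, and then reading off from that expression the equivalence with the solvability of (\ref{eq:752}). First I would substitute $y=\bar{C}_{2}x+\bar{D}_{21}w$ into $\hat{x}=E_{1}u+E_{2}y$ and then eliminate $x$ using $x=(I-\Lambda\bar{A})^{-1}(\Lambda\bar{B}_{1}w+\Lambda\bar{B}_{2}u+\bar{x}_{0})$ from (\ref{eq:general}); the inverse $(I-\Lambda\bar{A})^{-1}$ is a bounded causal operator by Lemma~\ref{lemma:inverse} because $\Lambda\bar{A}$ is strictly causal. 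This gives
\[
e=\big(E_{1}+(E_{2}\bar{C}_{2}-I)(I-\Lambda\bar{A})^{-1}\Lambda\bar{B}_{2}\big)u+(E_{2}\bar{C}_{2}-I)(I-\Lambda\bar{A})^{-1}(\Lambda\bar{B}_{1}w+\bar{x}_{0})+E_{2}\bar{D}_{21}w .
\]
Since the required bound $\|e\|\le\delta_{1}\|\bar{x}_{0}\|+\delta_{2}\|w\|$ is to hold uniformly in $u$, the coefficient operator of $u$ must be zero, and boundedness in $(w,\bar{x}_{0})$ then forces $S:=(I-E_{2}\bar{C}_{2})(I-\Lambda\bar{A})^{-1}$ to be a stable operator and (using Assumption~\ref{assumption3}, so that $E_{2}\bar{D}_{21}$ bounded yields $E_{2}$ bounded) $E_{2}$ to be stable.

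For the ``if'' direction, given stable $Q_{L}^{\varepsilon_{L}},Z_{L}^{\varepsilon_{L}}$ with $\|\mathcal{E}_{L}\|<\varepsilon_{L}$ (taken less than $1$, so $(I-\mathcal{E}_{L})^{-1}$ exists and is bounded by the Neumann series / Lemma~\ref{lemma:inverse}), I would define $E_{1},E_{2}$ by the stated formulas and verify the two requirements using the algebraic identity equivalent to (\ref{eq:752}), namely $I-\mathcal{E}_{L}=(I+Q_{L}^{\varepsilon_{L}})(I-\Lambda\bar{A})-Z_{L}^{\varepsilon_{L}}\bar{C}_{2}$. Left-multiplying the $u$-coefficient $E_{1}+(E_{2}\bar{C}_{2}-I)(I-\Lambda\bar{A})^{-1}\Lambda\bar{B}_{2}$ by $(I-\mathcal{E}_{L})$ and substituting this identity collapses it to $(I+Q_{L}^{\varepsilon_{L}})\Lambda\bar{B}_{2}-(I+Q_{L}^{\varepsilon_{L}})\Lambda\bar{B}_{2}=0$, while the same identity yields $(E_{2}\bar{C}_{2}-I)(I-\Lambda\bar{A})^{-1}=-(I-\mathcal{E}_{L})^{-1}(I+Q_{L}^{\varepsilon_{L}})$, a bounded operator; hence $e=-(I-\mathcal{E}_{L})^{-1}\big[(I+Q_{L}^{\varepsilon_{L}})\Lambda\bar{B}_{1}+Z_{L}^{\varepsilon_{L}}\bar{D}_{21}\big]w-(I-\mathcal{E}_{L})^{-1}(I+Q_{L}^{\varepsilon_{L}})\bar{x}_{0}$, which is bounded uniformly in $u$.

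For the ``only if'' direction, starting from a bounded $e$ I would set $Q_{L}^{\varepsilon_{L}}:=S-I$ and $Z_{L}^{\varepsilon_{L}}:=-E_{2}$, which are stable by the first paragraph, and check directly from $S(I-\Lambda\bar{A})=I-E_{2}\bar{C}_{2}$ that $(I+Q_{L}^{\varepsilon_{L}})\Lambda\bar{A}+Z_{L}^{\varepsilon_{L}}\bar{C}_{2}-Q_{L}^{\varepsilon_{L}}=0$, so in fact $\mathcal{E}_{L}=0$ and $\|\mathcal{E}_{L}\|<\varepsilon_{L}$ for every $\varepsilon_{L}>0$; substituting $\mathcal{E}_{L}=0$ into the claimed formulas for $E_{1},E_{2}$ and using the vanishing of the $u$-coefficient (which gives $E_{1}=S\Lambda\bar{B}_{2}=(I+Q_{L}^{\varepsilon_{L}})\Lambda\bar{B}_{2}$) shows they are satisfied.

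The step I expect to be the main obstacle is the boundedness of $E_{2}$ (equivalently $Z_{L}^{\varepsilon_{L}}$) in the ``only if'' direction: a priori the hypothesis only gives that $S\Lambda\bar{B}_{1}$ and $E_{2}\bar{D}_{21}$ are bounded, and converting boundedness of $E_{2}\bar{D}_{21}$ into boundedness of $E_{2}$ is exactly where Assumption~\ref{assumption3} (every measured output is noise-corrupted, i.e.\ $\bar{D}_{21}$ has trivial left null space) must be invoked carefully in the operator setting. A secondary technical point is ensuring $(I-\mathcal{E}_{L})^{-1}$ and $(I-\Lambda\bar{A})^{-1}$ are legitimate bounded causal operators, which is handled by Lemma~\ref{lemma:inverse} together with the strict causality of $\Lambda\bar{A}$ and the smallness of $\mathcal{E}_{L}$; the remainder is routine operator algebra paralleling the proof of Theorem~\ref{thm:SE}.
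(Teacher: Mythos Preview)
Your proposal is correct and follows essentially the same route as the paper's proof: write $e$ as an operator acting on $(u,w,\bar{x}_{0})$, force the $u$-coefficient to vanish by uniformity, deduce boundedness of $S=(I-E_{2}\bar{C}_{2})(I-\Lambda\bar{A})^{-1}$ and of $E_{2}$ (via Assumption~\ref{assumption3}), and set $Q_{L}=S-I$, $Z_{L}=-E_{2}$ to get $\mathcal{E}_{L}=0$. You are in fact slightly more explicit than the paper, which glosses over the vanishing of the $u$-coefficient and defers the converse (``if'') direction entirely to the proof of Theorem~\ref{thm:SE}; your direct verification of that direction via the identity $I-\mathcal{E}_{L}=(I+Q_{L}^{\varepsilon_{L}})(I-\Lambda\bar{A})-Z_{L}^{\varepsilon_{L}}\bar{C}_{2}$ is exactly what that deferral amounts to.
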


The above lemma characterizes the set of all state-estimators. Such
state-estimators conform with the network structure $\mathcal{S}$ if and
only if $E_{1}$ and $E_{2}$, which are bounded operators, conform with the
network structure. When $\varepsilon _{L}=0$, in Lemma \ref{lem:SE}, $%
E_{1},E_{2}\in \mathcal{S}$ if and only if%
\begin{equation}
\left( I+Q_{L}^{\varepsilon_L}\right) \Lambda \bar{B}_{2}\in \mathcal{S}%
{\normalsize ,}Z_{L}^{\varepsilon_L}\in \mathcal{S}{\normalsize .}
\label{eq:750}
\end{equation}%
However, when $0<\varepsilon _{L}<1$, without Assumptions \ref{assumption1}-%
\ref{assumption2}, $\mathcal{E}_{L}$ may not have the network structure $%
\mathcal{S}$ and hence (\ref{eq:750}) does not, in general, yield $%
E_{1},E_{2}\in \mathcal{S}$. Therefore, for $\varepsilon _{L}\neq 0$, we
will consider an alternative observer%
\begin{equation*}
\hat{x}_{new}=\left( I-\mathcal{E}_{L}\right) \hat{x}=\left(
I+Q_{L}^{\varepsilon }\right) \Lambda \bar{B}_{2}u-Z_{L}^{\varepsilon }y.
\end{equation*}%
We note that (\ref{eq:750}) ensures that $\hat{x}_{new}$ has the network
structure and furthermore since $\hat{x}_{new}-\hat{x}=\mathcal{E}_{L}\hat{x}
$, the difference can be made arbitrarily small by choosing a small $%
\varepsilon _{L}$. Then, for the output feedback problem, we will use $\hat{x%
}_{new}$ instead of $\hat{x}$. We write the control input as%
\begin{eqnarray*}
u &=&\left[ 
\begin{array}{cc}
K_{1} & K_{2}%
\end{array}%
\right] \left[ 
\begin{array}{c}
\hat{x}_{new} \\ 
y%
\end{array}%
\right] \\
&=&\left( \left[ 
\begin{array}{cc}
K_{1} & K_{2}%
\end{array}%
\right] +\left[ 
\begin{array}{cc}
\mathcal{E}_{L} & 0 \\ 
0 & 0%
\end{array}%
\right] \right) \left[ 
\begin{array}{c}
\hat{x} \\ 
y%
\end{array}%
\right] ,
\end{eqnarray*}%
and treat $\left[ 
\begin{array}{cc}
\mathcal{E}_{L} & 0 \\ 
0 & 0%
\end{array}%
\right] $ as an additive uncertainty with size $\varepsilon _{L}$. Then,
appealing to the small-gain theorem, we have the following:

\begin{theorem}
Suppose there exists $\left( \varepsilon _{L},Q_{L}^{\varepsilon
_{L}},Z_{L}^{\varepsilon _{L}}\right) $ satisfying (\ref{eq:752}) and (\ref%
{eq:750}), and there exist bounded operators $Q=\left[ 
\begin{array}{cc}
Q_{11} & Q_{12} \\ 
Q_{21} & Q_{22}%
\end{array}%
\right] \in \mathcal{S}^{2\times 2}$ and $Z=\left[ 
\begin{array}{cc}
Z_{1} & Z_{2}%
\end{array}%
\right] \in \mathcal{S}^{1\times 2}$ such that 
\begin{eqnarray*}
\left\Vert \left[ 
\begin{array}{cc}
\Lambda \bar{A} & 0 \\ 
\bar{C}_{2} & 0%
\end{array}%
\right] +\left[ 
\begin{array}{cc}
\Lambda \bar{A}-I & 0 \\ 
\bar{C}_{2} & -I%
\end{array}%
\right] Q+\left[ 
\begin{array}{c}
\Lambda \bar{B}_{2} \\ 
0%
\end{array}%
\right] Z\right\Vert &\leq &\varepsilon , \\
\varepsilon _{L}\left\Vert I+Q\right\Vert &<&1,
\end{eqnarray*}%
for some $\varepsilon \in \lbrack 0,1)$. Then the following controller is
stabilizing and stably implementable over the network structure:%
\begin{equation}
K:\left\{ 
\begin{array}{c}
x_{K}=\mathcal{A}_{K}x_{K}+\mathcal{B}_{K}y \\ 
u=\mathcal{C}_{K}x_{K}%
\end{array}%
\right. ,
\end{equation}%
where $x_{K}=\left[ \hat{x}^{T},\xi _{1}^{T},\xi _{2}^{T},u^{T}\right] $%
\begin{eqnarray*}
\mathcal{A}_{K} &=&\left[ 
\begin{array}{cccc}
0 & 0 & 0 & \left( I+Q_{L}^{\varepsilon_L}\right) \Lambda \bar{B}_{2} \\ 
I & -Q_{11} & -Q_{12} & 0 \\ 
0 & -Q_{21} & -Q_{22} & 0 \\ 
0 & Z_{1} & Z_{2} & 0%
\end{array}%
\right] , \\
\mathcal{B}_{K} &=&\left[ 
\begin{array}{c}
-Z_{L}^{\varepsilon_L} \\ 
0 \\ 
I \\ 
0%
\end{array}%
\right] ,\mathcal{C}_{K}=\left[ 
\begin{array}{cccc}
0 & 0 & 0 & I%
\end{array}%
\right] .
\end{eqnarray*}
\end{theorem}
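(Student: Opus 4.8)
The plan is to follow essentially the route of Theorems~\ref{thm:cond_output}, \ref{thm:SE} and \ref{thm:realization}, adapted in two ways dictated by the absence of Assumptions~\ref{assumption1}--\ref{assumption2}: the state-estimator is now supplied by Lemma~\ref{lem:SE} instead of Theorem~\ref{thm:SE} (Lemma~\ref{lem:SE} does not use those assumptions), and since the idealized estimate need not lie in $\mathcal{S}$, the realization carries $\hat{x}_{new}=(I-\mathcal{E}_{L})\hat{x}$ in its place, with the residual $\hat{x}-\hat{x}_{new}=\mathcal{E}_{L}\hat{x}$ absorbed by a small-gain argument whose uncertainty has size $\varepsilon_{L}$. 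Throughout I write $\hat{x}_{new}$ for the first block-component of $x_{K}$ (denoted $\hat{x}$ in the statement) and set $\hat{x}:=(I-\mathcal{E}_{L})^{-1}\hat{x}_{new}$.

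First I would record three preliminary facts. (i) The first block-row of $(\mathcal{A}_{K},\mathcal{B}_{K})$ gives $\hat{x}_{new}=(I+Q_{L}^{\varepsilon_{L}})\Lambda\bar{B}_{2}u-Z_{L}^{\varepsilon_{L}}y$, hence $\hat{x}=E_{1}u+E_{2}y$ with $E_{1},E_{2}$ precisely the operators of Lemma~\ref{lem:SE}; so~(\ref{eq:752}) guarantees that $e=\hat{x}-x$ is bounded and, as in~(\ref{eq:error}), a fixed function of $w$ and $\bar{x}_{0}$ alone, uniformly in $u$. (ii) Condition~(\ref{eq:750}) gives $(I+Q_{L}^{\varepsilon_{L}})\Lambda\bar{B}_{2}\in\mathcal{S}$ and $Z_{L}^{\varepsilon_{L}}\in\mathcal{S}$; together with $Q_{ij},Z_{i}\in\mathcal{S}$ and the standing assumption that $\mathcal{S}$ is a subspace closed under addition with $\Lambda\mathcal{S}\subseteq\mathcal{S}$, every block of $\mathcal{A}_{K},\mathcal{B}_{K},\mathcal{C}_{K}$ lies in $\mathcal{S}$ (and $\mathcal{D}_{K}=0\in\mathcal{S}$), so~(\ref{eq:realization}) conforms to the network. (iii) By Lemma~\ref{lemma:inverse} the required inverses exist: $(I+Q)^{-1}$, and then $(I-\mathcal{A}_{K})^{-1}$, the latter because the only feedback loop in $\mathcal{A}_{K}$ not resolved by $(I+Q)^{-1}$ closes through the strictly-causal operator $(I+Q_{L}^{\varepsilon_{L}})\Lambda\bar{B}_{2}$. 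Thus~(\ref{eq:realization}) is a legitimate operator realization in the sense of Definition~\ref{def:stab_ctrl}, item~1.

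Next I would prove centralized stability. From the realization, $u=Z(I+Q)^{-1}[\hat{x}_{new}^{T},y^{T}]^{T}=[K_{1}\ K_{2}][\hat{x}_{new}^{T},y^{T}]^{T}$ with $[K_{1}\ K_{2}]=Z(I+Q)^{-1}$, and $\hat{x}_{new}=(I-\mathcal{E}_{L})\hat{x}$, so
\begin{equation*}
u=\left[\begin{array}{cc}K_{1}&K_{2}\end{array}\right]\left[\begin{array}{c}\hat{x}\\ y\end{array}\right]+\left[\begin{array}{cc}K_{1}&K_{2}\end{array}\right]\left[\begin{array}{cc}-\mathcal{E}_{L}&0\\ 0&0\end{array}\right]\left[\begin{array}{c}\hat{x}\\ y\end{array}\right].
\end{equation*}
The first term, fed back through the plant~(\ref{eq:plant_general}) with the bounded-error estimator of step (i) and the hypothesis $\|\mathcal{E}_{Q,Z}\|\le\varepsilon<1$ (cf.~(\ref{eq:epsilon}),(\ref{cond1})), produces a bounded \emph{nominal} closed loop: this is exactly the $w\to z$ formula~(\ref{eq:closed_maps}) of Proposition~\ref{prop:1} (equivalently the stability half of Theorem~\ref{thm:cond_output}), whose derivation rests only on $\|\mathcal{E}_{Q,Z}\|<1$ and on the boundedness of the estimator and of $R_{1},R_{2}$, not on Assumptions~\ref{assumption1}--\ref{assumption2}. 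The second term is a feedback perturbation driven by the uncertainty $\Delta$ equal to $\mathrm{diag}(-\mathcal{E}_{L},0)$, with $\|\Delta\|\le\varepsilon_{L}$. Propagating $\Delta$'s output around the nominal loop back to its input $[\hat{x}^{T},y^{T}]^{T}$: $\Delta$ enters exactly where the controller's internal state is built, namely $[\xi_{1}^{T},\xi_{2}^{T}]^{T}=(I+Q)^{-1}(\cdot)$ and $[\hat{x}_{new}^{T},y^{T}]^{T}=(I+Q)[\xi_{1}^{T},\xi_{2}^{T}]^{T}$, and the (approximate) controller model-matching collapses the plant--controller interconnection so that the only surviving return path applies $(I+Q)$ once; hence this loop has gain $\le\|I+Q\|\,\|\Delta\|\le\varepsilon_{L}\|I+Q\|<1$, and the small-gain theorem makes the actual closed loop well-posed and bounded, i.e. $K$ is centralized stabilizing (Definition~\ref{def:stab_central}).

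Finally, for stable implementability, injecting the fictitious subcontroller noises $n_{x},n_{u}$ as in~(\ref{eq:real_n}) merely adds bounded exogenous signals acting additively on the (already bounded) operators $\mathcal{A}_{K},\mathcal{B}_{K},\mathcal{C}_{K}$; since the noise-free loop is well-posed and bounded, the closed-loop maps from $(\bar{x}_{0},w,n_{x},n_{u})$ to $(x,y,x_{K},u)$ stay bounded, exactly as in the proof of Theorem~\ref{thm:realization}, giving $\mathcal{S}_{K}=\mathcal{S}$; the final claim (that $\varepsilon=0$ becomes necessary-and-sufficient under Assumptions~\ref{assumption1}--\ref{assumption2}) is then inherited from Theorems~\ref{thm:state-feedback}/\ref{thm:cond_output}, which let $\mathcal{E}_{Q,Z}$ be driven to zero. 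The main obstacle I expect is the closed-loop computation in the stability step: explicitly tracking how $\mathcal{E}_{L}\hat{x}$ travels through plant and controller --- the analogue of the derivation of~(\ref{eq:closed_maps}), but with the observer-error perturbation inside the loop --- and confirming that, after $\|\mathcal{E}_{Q,Z}\|<1$ absorbs the (bounded, harmless) model-matching residual, the gain seen by $\mathcal{E}_{L}$ is dominated by $\|I+Q\|$. Everything else --- membership in $\mathcal{S}$, the operator inverses via Lemma~\ref{lemma:inverse}, and the noise-injection estimate --- is bookkeeping already carried out in Theorems~\ref{thm:cond_output}, \ref{thm:SE} and \ref{thm:realization}.
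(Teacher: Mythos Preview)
Your proposal is correct and follows essentially the same route the paper takes: the paper gives no formal proof of this theorem, but the paragraph immediately preceding it outlines the argument---replace $\hat{x}$ by $\hat{x}_{new}=(I-\mathcal{E}_{L})\hat{x}$ so that the realization lies in $\mathcal{S}$ via~(\ref{eq:750}), view $\mathrm{diag}(\mathcal{E}_{L},0)$ as an additive uncertainty of size $\varepsilon_{L}$, and invoke the small-gain theorem with the hypothesis $\varepsilon_{L}\|I+Q\|<1$---and this is exactly your plan, with the nominal-loop boundedness supplied by the computations in Theorems~\ref{thm:cond_output} and~\ref{thm:realization} and the estimator by Lemma~\ref{lem:SE}. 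Your identification of the loop-gain bound as $\|I+Q\|\,\varepsilon_{L}$ (coming from the fact that in the nominal closed loop $[\hat{x}^{T},y^{T}]^{T}=(I+Q)(I-\mathcal{E}_{Q,Z})^{-1}\{\cdots\}$, cf.~(\ref{eq:xy})) and your candid flagging of that computation as the only nontrivial step are both on target; the paper leaves this step implicit as well.
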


\section{Illustrative Example}

Consider a nested structure with two subsystems $P_{1}$ and $P_{2}$ given as
follows:%
\begin{equation*}
P_{1}:\left\{ 
\begin{array}{c}
x_{1}\left( t+1\right) =Ax_{1}\left( t\right) +B_{1}w_{1}\left( t\right)
+B_{2}u_{1}\left( t\right) \\ 
z_{1}\left( t\right) =C_{1}x_{1}\left( t\right) +D_{12}u_{1}\left( t\right)
\\ 
y_{1}\left( t\right) =C_{2}x_{1}\left( t\right) +w_{1}\left( t\right) \\ 
\nu _{21}\left( t\right) =C_{3}x_{1}\left( t\right)%
\end{array}%
\right. ,
\end{equation*}%
\begin{equation*}
P_{2}:\left\{ 
\begin{array}{c}
x_{2}\left( t+1\right) =Ax_{2}\left( t\right) +B_{2}u_{2}\left( t\right)
+B_{3}\nu _{21}\left( t-1\right) \\ 
z_{2}\left( t\right) =C_{1}x_{2}\left( t\right) +D_{12}u_{2}\left( t\right)
\\ 
y_{2}\left( t\right) =C_{2}x_{2}\left( t\right) +w_{2}\left( t\right)%
\end{array}%
\right. ,
\end{equation*}%
where%
\begin{eqnarray*}
A &=&\left[ 
\begin{array}{cc}
0.5 & 0 \\ 
0.3 & 1.2%
\end{array}%
\right] ,B_{1}\left[ 
\begin{array}{c}
1 \\ 
-0.5%
\end{array}%
\right] ,B_{2}=\left[ 
\begin{array}{c}
0.5 \\ 
1%
\end{array}%
\right] ,B_{3}=\left[ 
\begin{array}{c}
1 \\ 
0.1%
\end{array}%
\right] , \\
C_{1} &=&\left[ 
\begin{array}{cc}
1 & 2%
\end{array}%
\right] ,C_{2}=\left[ 
\begin{array}{cc}
0.1 & 1%
\end{array}%
\right] ,D_{12}=1,C_{3}=\left[ 
\begin{array}{cc}
0.2 & 0.1%
\end{array}%
\right] .
\end{eqnarray*}%
We enforce a lower-triangular structure on the aggregate controller with a
single step delay from subsystem $1$ to $2$. That is,%
\begin{equation*}
\left[ 
\begin{array}{c}
u_{1} \\ 
u_{2}%
\end{array}%
\right] =\left[ 
\begin{array}{cc}
K_{11} & 0 \\ 
\Lambda K_{21} & K_{22}%
\end{array}%
\right] \left[ 
\begin{array}{c}
y_{1} \\ 
y_{2}%
\end{array}%
\right] .
\end{equation*}%
Following our developments in this paper, to parametrize the set of all
stabilizing controllers we need to find one state-estimator. This state
estimator is given by (\ref{SE}) with $\mathcal{E}_{L}=0$ and FIR $Q_{L}$
and $Z_{L}$ as follows: $Q_{L}=\sum_{k=0}^{2}\Lambda ^{k}q_{L}\left(
k\right) $, $Z_{L}=\sum_{k=0}^{2}\Lambda ^{k}z_{L}\left( k\right) $, with%
\begin{eqnarray*}
q_{L}\left( 0\right) &=&\left[ 
\begin{array}{cccc}
0.4680 & -0.3197 & 0 & 0 \\ 
0.1685 & -0.1151 & 0 & 0 \\ 
0 & 0 & 0.4785 & -0.2148 \\ 
0 & 0 & 0.1722 & -0.0780%
\end{array}%
\right] , \\
q_{L}\left( 1\right) &=&\left[ 
\begin{array}{cccc}
0.1381 & -0.3836 & 0 & 0 \\ 
0.0497 & -0.1381 & 0 & 0 \\ 
0.0941 & -0.9590 & 0.1570 & -0.4361 \\ 
-0.0048 & -0.2376 & 0.0564 & -0.1567%
\end{array}%
\right] , \\
q_{L}\left( 2\right) &=&\left[ 
\begin{array}{cccc}
0 & 0 & 0 & 0 \\ 
0 & 0 & 0 & 0 \\ 
-0.0397 & -0.0094 & 0 & 0 \\ 
-0.0143 & -0.0034 & 0 & 0%
\end{array}%
\right] ,
\end{eqnarray*}%
\begin{eqnarray*}
z_{L}\left( 0\right) &=&\left[ 
\begin{array}{cc}
-0.3197 & 0 \\ 
-1.3151 & 0 \\ 
0 & -0.2148 \\ 
0 & -1.2780%
\end{array}%
\right] , \\
z_{L}\left( 1\right) &=&\left[ 
\begin{array}{cc}
0 & 0 \\ 
0 & 0 \\ 
-1.0590 & -0.1784 \\ 
-0.2476 & -0.0631%
\end{array}%
\right] ,z_{L}\left( 2\right) =\left[ 
\begin{array}{cc}
0.4604 & 0 \\ 
0.1657 & 0 \\ 
1.0957 & 0.5233 \\ 
0.2653 & 0.1880%
\end{array}%
\right] .
\end{eqnarray*}

We are interested in finding the $l_{\infty }$ induced optimal controller.
For this particular performance metric, the model matching problems in
Algorithm \ref{alg1} can be reduced to linear programs. This is carried out
by performing the optimization over the space of impulse responses of stable
systems $Q$ and $Z$. Furthermore, such infinite-dimensional optimizations
can be approximated from above and below by finite-dimensional optimizations
over the set of FIR $Q\ $and $Z$ of the following form $Q=\sum_{k=0}^{N}%
\Lambda ^{k}q\left( k\right) $, $Z=\sum_{k=0}^{N}\Lambda ^{k}r\left(
k\right) $, using the scaled-Q method of \cite{khammash2000new}. The finite
upper and lower finite-dimensional approximations converge to optimal
solution as $N$ grows large. For large $N$, we achieve the optimal
closed-loop $l_{\infty }$ induced norm of $1.5$ as shown in Figure \ref{fig:norm}.

\begin{figure}[h]
\center \includegraphics[width=5cm]{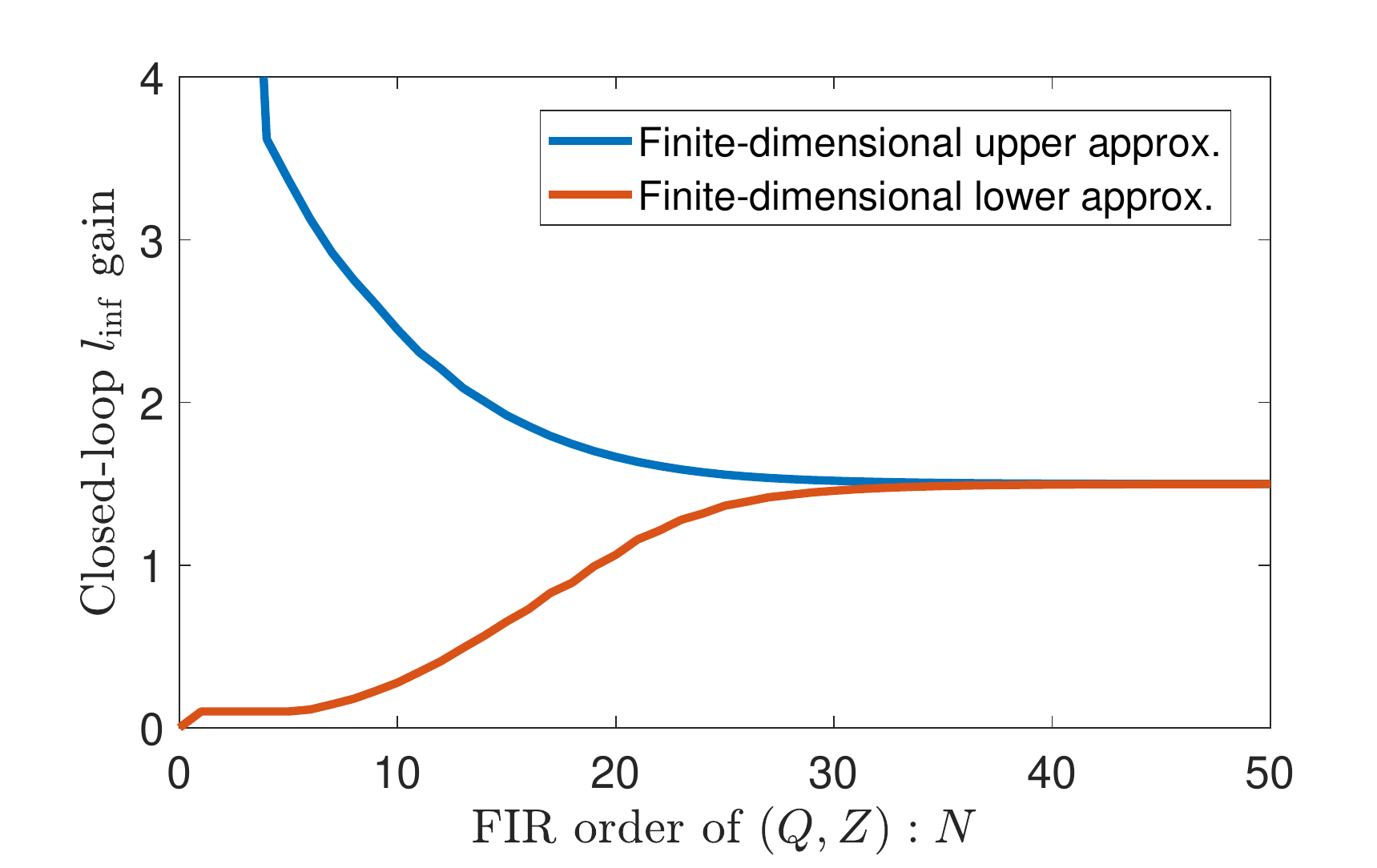}
\caption{Upper and lower bound approximations of optimal gain} \label{fig:norm}
\end{figure}

\section{Conclusion}

In this paper, we proposed a framework to synthesize structured controllers
that can be stably realized over the network. This framework is unifying in
the sense that various linear system, e.g., LTI, LTV, and linear switched
systems, can be treated analogously with respect to any measure of
performance, e.g., $l_{1}$, $l_{2}$, or $l_{\infty }$ induced norms. Our
approach is based on utilizing an operator representation of the system and
combining it with the classical Youla-parameterization. We formulated the
stability and performance problems as tractable model-matching convex
optimization. We proposed a new separation principle which was utilized to
find output feedback controllers. Furthermore, the controllers can be stably
realized over the network.

\section{Appendix}

In this section, we present the proofs of the results presented in the
paper. We will make use of the following lemmas regarding operators on $%
l_{\infty ,e}$ in the proofs.

\begin{lemma}
\label{lem:01}Given an invertible partitioned operator $X=\left[ 
\begin{array}{cc}
X_{11} & X_{12} \\ 
X_{21} & X_{22}%
\end{array}%
\right] $, its inverse is given by $X^{-1}=\left[ 
\begin{array}{cc}
S_{11} & S_{12} \\ 
S_{21} & S_{22}%
\end{array}%
\right] $ where $S_{11}=\left( X_{11}-X_{12}X_{22}^{-1}X_{21}\right) ^{-1}$, 
$S_{21}=-X_{22}^{-1}X_{21}S_{11}$, $S_{22}=\left(
X_{22}-X_{21}X_{11}^{-1}X_{12}\right) ^{-1}$, and $%
S_{12}=-X_{11}^{-1}X_{12}S_{22}$.
\end{lemma}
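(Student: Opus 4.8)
The plan is to prove the formula by straightforward verification rather than by deriving it: I will write down the candidate operator $S$ with the four stated blocks $S_{11},S_{12},S_{21},S_{22}$ and check directly that $XS=I$ block by block; since $X$ is assumed invertible, the identity $XS=I$ forces $S=X^{-1}$. Before starting I would make explicit the hypotheses that are implicit in the statement: for the displayed formulas even to be meaningful we need $X_{11}$, $X_{22}$, and the two Schur complements $X_{11}-X_{12}X_{22}^{-1}X_{21}$ and $X_{22}-X_{21}X_{11}^{-1}X_{12}$ to be invertible bounded causal operators on $l_{\infty,e}$. This is exactly the setting in which the lemma is invoked (compare Lemma \ref{lemma:inverse}, part 2, where one assumes $I-X_{11}$ and $I-X_{22}$ invertible), so I would adopt it as a standing assumption. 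As a conceptual shortcut one can also observe that $S$ is just the product of the inverses of the block $LDU$ factors of $X$, but the direct check below is the most transparent.

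The verification reduces to the four block identities $X_{11}S_{11}+X_{12}S_{21}=I$, $X_{11}S_{12}+X_{12}S_{22}=0$, $X_{21}S_{11}+X_{22}S_{21}=0$, and $X_{21}S_{12}+X_{22}S_{22}=I$. Substituting $S_{21}=-X_{22}^{-1}X_{21}S_{11}$ into the first identity collapses it to $(X_{11}-X_{12}X_{22}^{-1}X_{21})S_{11}$, which equals $I$ by the definition of $S_{11}$; substituted into the third identity it gives $X_{21}S_{11}-X_{21}S_{11}=0$. Likewise, substituting $S_{12}=-X_{11}^{-1}X_{12}S_{22}$ into the second identity gives $-X_{12}S_{22}+X_{12}S_{22}=0$, and into the fourth gives $(X_{22}-X_{21}X_{11}^{-1}X_{12})S_{22}=I$ by the definition of $S_{22}$. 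The only point requiring care is that operator composition on $l_{\infty,e}$ is associative, which legitimizes each of these regroupings and cancellations, but noncommutative, so the left-to-right order in every product must be preserved; no step above uses commutativity.

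Having shown $XS=I$, I apply $X^{-1}$ on the left (available by hypothesis) to conclude $S=X^{-1}$; the symmetric computation gives $SX=I$ if one prefers a two-sided check. Boundedness and causality of $S$ require no separate argument, as each $S_{ij}$ is a finite composition of the bounded causal operators $X_{ij}$ together with the assumed bounded causal inverses, and finite sums and products of such operators are again bounded and causal. I do not anticipate any genuine obstacle: the content is routine block algebra, and the single thing worth stating carefully, which I would emphasize at the outset, is the invertibility of the diagonal blocks and Schur complements that make the formulas well posed.
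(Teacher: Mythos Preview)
Your proposal is correct and follows exactly the approach the paper takes: the paper states that the proof of this lemma ``is by straight inspection that $XX^{-1}=X^{-1}X=I$,'' and you have simply spelled out that block-by-block verification in detail. Your explicit mention of the implicit invertibility hypotheses on $X_{11}$, $X_{22}$, and the Schur complements is a welcome clarification that the paper leaves tacit.
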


\begin{lemma}
\label{lem:02}Suppose $X\in \mathcal{S}$ with $\mathcal{S}$ satisfying
Assumptions \ref{assumption1} and \ref{assumption2}. Then, $\left( I-\Lambda
X\right) ^{-1}\in \mathcal{S}$.
\end{lemma}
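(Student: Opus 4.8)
The plan is to prove the claim by a Neumann-type (successive-approximation) expansion, exploiting the strict causality of $\Lambda X$ together with the algebraic closure properties of $\mathcal{S}$ guaranteed by Assumption \ref{assumption1}; Assumption \ref{assumption2} will not actually be needed.

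First I would record two elementary facts. Since $X\in\mathcal{S}$ is in particular a causal operator, $\Lambda X$ is strictly causal (as noted in the Preliminaries), so in the representation (\ref{eq:matrix}) its diagonal blocks vanish; consequently $I-\Lambda X$ is block lower triangular with identity diagonal blocks, which makes $(I-\Lambda X)^{-1}$ well defined on $l_{\infty,e}$ by forward substitution (this is also covered by Lemma \ref{lemma:inverse}). Secondly, strict causality of $\Lambda X$ forces the first $k$ block-rows of $(\Lambda X)^{k}$ to be zero; hence for the partial sums $S_{N}:=\sum_{k=0}^{N}(\Lambda X)^{k}$ one has $(I-\Lambda X)S_{N}=I-(\Lambda X)^{N+1}$, so $S_{N}$ coincides with $(I-\Lambda X)^{-1}$ on block-rows $0,\dots,N$. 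Thus $(I-\Lambda X)^{-1}$ is the pointwise-in-time limit of the $S_{N}$, independently of any boundedness considerations.

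Next I would check that every $S_{N}$ lies in $\mathcal{S}$. By Assumption \ref{assumption1}, $\Lambda\mathcal{S}\subseteq\mathcal{S}$, so $\Lambda X\in\mathcal{S}$; closure of $\mathcal{S}$ under multiplication gives $(\Lambda X)^{k}\in\mathcal{S}$ for all $k\ge1$, and $(\Lambda X)^{0}=I\in\mathcal{S}$; closure under addition then yields $S_{N}\in\mathcal{S}$ for every $N$.

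The only genuine obstacle is the final limiting step: concluding that the pointwise-in-time limit $(I-\Lambda X)^{-1}$ of the operators $S_{N}\in\mathcal{S}$ again belongs to $\mathcal{S}$. Here I would invoke the fact that a network-induced structure $\mathcal{S}$ is specified by linear constraints imposed block-by-block on the matrix representation (\ref{eq:matrix}) -- e.g. the sparsity and delay patterns of Examples \ref{ex:01}--\ref{exam:02} -- so that an operator lies in $\mathcal{S}$ precisely when each of its blocks satisfies the corresponding constraint. Since $S_{N}$ agrees with $(I-\Lambda X)^{-1}$ on all blocks in rows $0,\dots,N$ and $S_{N}\in\mathcal{S}$, every block of $(I-\Lambda X)^{-1}$ satisfies the defining constraints of $\mathcal{S}$; letting $N\to\infty$ gives $(I-\Lambda X)^{-1}\in\mathcal{S}$, which completes the argument. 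I expect articulating this block-wise closedness of $\mathcal{S}$ to require the most care, even though it is transparent for the structures of interest.
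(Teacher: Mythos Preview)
The paper does not actually give a proof of this lemma: it states that the argument ``follows similarly to the proofs in \cite{rotkTAC06}'' and omits the details. Your Neumann-series approach is the standard one and is almost certainly what the cited reference does as well, so in that sense your proposal is aligned with the intended argument.

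Your write-up is correct in spirit and you have put your finger on the only real issue: Assumption~\ref{assumption1} as stated (closure under $\Lambda$, addition, multiplication, with $I,0\in\mathcal{S}$) does \emph{not} by itself imply that $\mathcal{S}$ is closed under the block-row-wise limit needed to pass from $S_N$ to $(I-\Lambda X)^{-1}$. You handle this honestly by appealing to the concrete way $\mathcal{S}$ is specified in the paper, namely as a sparsity/delay pattern imposed entrywise on the matrix representation~(\ref{eq:matrix}); for such $\mathcal{S}$, membership is a blockwise condition and your argument goes through cleanly. This is exactly the kind of tacit assumption the paper (and the cited reference) is operating under, so your identification of it as the step requiring the most care is apt rather than a defect. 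Your remark that Assumption~\ref{assumption2} is not used is also correct.
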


\subsection{Proof of Lemmas \protect\ref{lemma:inverse}, \protect\ref{lem:01}%
, \protect\ref{lem:02}}

The proof of Lemma \ref{lem:01} is by straight inspection that $%
XX^{-1}=X^{-1}X=I$. The proofs of \ref{lemma:inverse} and \ref{lem:02}
follow similarly to the proofs in \cite{rotkTAC06} and omitted here in the
interest of space.

\subsection{Proof of Theorem \protect\ref{thm:state-feedback}}

We will prove this theorem in the following order $1\Rightarrow 4\Rightarrow
3\Rightarrow 2\Rightarrow 1$.

$1\Rightarrow 4$) Suppose there exists a centralized stabilizing controller $%
K$, conforming with the network structure $\mathcal{S}$, from $\left[
x^{T},y^{T}\right] ^{T}$ to $u$ such that the closed-loop system is stable.
Direct calculation verifies that the interconnection of the generalized
plant (\ref{eq:plant_general}) with $K=\left[ 
\begin{array}{cc}
K_{1} & K_{2}%
\end{array}%
\right] $ results in the closed-loop map (\ref{eq:cl'}). According to
Definition \ref{def:stab_central}, the mappings from $\bar{x}_{0}$ and $w$
to $x$, $y$, and $u$ need to be bounded. Under Assumption \ref{assumption3},
this renders operators%
\begin{equation}
Q^{0}:=\left[ 
\begin{array}{cc}
I-\Lambda \bar{A}-\Lambda \bar{B}_{2}K_{1} & -\Lambda \bar{B}_{2}K_{2} \\ 
-\bar{C}_{2} & I%
\end{array}%
\right] ^{-1}-I,  \label{Q0}
\end{equation}%
and%
\begin{eqnarray}
Z^{0} &:&=\left[ 
\begin{array}{cc}
K_{1} & K_{2}%
\end{array}%
\right] \left[ 
\begin{array}{cc}
I-\Lambda \bar{A}-\Lambda \bar{B}_{2}K_{1} & -\Lambda \bar{B}_{2}K_{2} \\ 
-\bar{C}_{2} & I%
\end{array}%
\right] ^{-1}  \label{Z0} \\
&=&\left[ 
\begin{array}{cc}
K_{1} & K_{2}%
\end{array}%
\right] \left( I+Q^{0}\right)  \notag
\end{eqnarray}%
bounded . We note that the inverse in (\ref{Q0}) exists according to Lemma %
\ref{lemma:inverse}. In terms of these operators, (\ref{eq:cl'}) can be
rewritten as%
\begin{eqnarray*}
\left[ 
\begin{array}{c}
x \\ 
y%
\end{array}%
\right] &=&\left( I+Q^{0}\right) \left\{ \left[ 
\begin{array}{c}
\Lambda \bar{B}_{1} \\ 
\bar{D}_{21}%
\end{array}%
\right] w+\left[ 
\begin{array}{c}
I \\ 
0%
\end{array}%
\right] \bar{x}_{0}\right\} , \\
u &=&Z^{0}\left\{ \left[ 
\begin{array}{c}
\Lambda \bar{B}_{1} \\ 
\bar{D}_{21}%
\end{array}%
\right] w+\left[ 
\begin{array}{c}
I \\ 
0%
\end{array}%
\right] \bar{x}_{0}\right\} .
\end{eqnarray*}%
We need to show (\ref{cond1}) holds for $Q^{0}$ and $Z^{0}$ given in (\ref%
{Q0})-(\ref{Z0}). To this end, pre-multiplying (\ref{Q0}) by $\left[ 
\begin{array}{cc}
I-\Lambda \bar{A}-\Lambda \bar{B}_{2}K_{1} & -\Lambda \bar{B}_{2}K_{2} \\ 
-\bar{C}_{2} & I%
\end{array}%
\right] $, we obtain%
\begin{equation*}
\left[ 
\begin{array}{cc}
I-\Lambda \bar{A}-\Lambda \bar{B}_{2}K_{1} & -\Lambda \bar{B}_{2}K_{2} \\ 
-\bar{C}_{2} & I%
\end{array}%
\right] \left( I+Q^{0}\right) -I=0.
\end{equation*}%
The above expression, using $Z^{0}=\left[ 
\begin{array}{cc}
K_{1} & K_{2}%
\end{array}%
\right] \left( I+Q^{0}\right) $, yields%
\begin{equation*}
\left[ 
\begin{array}{cc}
I-\Lambda \bar{A} & 0 \\ 
-\bar{C}_{2} & I%
\end{array}%
\right] \left( I+Q^{0}\right) -\left[ 
\begin{array}{c}
\Lambda \bar{B}_{2} \\ 
0%
\end{array}%
\right] Z^{0}-I=0,
\end{equation*}%
which is the same as (\ref{cond1}) for $\varepsilon =0$. We, further need to
show that $Q^{0}$ and $Z^{0}$ have the required structure given in (\ref%
{eq:QZ}). This is carried out by direct inspection of $Q^{0}$ and $Z^{0}$
given by (\ref{Q0}) and (\ref{Z0}). According to Lemma \ref{lem:01}, $Q^{0}=%
\left[ 
\begin{array}{cc}
q_{11} & q_{12} \\ 
q_{21} & q_{22}%
\end{array}%
\right] $ where%
\begin{equation*}
q_{11}=\left( I-\Lambda \left( \bar{A}+\bar{B}_{2}K_{1}+\bar{B}_{2}K_{2}\bar{%
C}_{2}\right) \right) ^{-1}-I,q_{21}=\bar{C}_{2}q_{11},
\end{equation*}%
\begin{equation*}
q_{22}=\left[ I-\bar{C}_{2}\left( I-\Lambda \bar{A}-\Lambda \bar{B}%
_{2}K_{1}\right) \Lambda \bar{B}_{2}K_{2}\right] ^{-1}-I,
\end{equation*}%
\begin{equation*}
q_{12}=\left( I-\Lambda \bar{A}-\Lambda \bar{B}_{2}K_{1}\right) ^{-1}\Lambda 
\bar{B}_{2}K_{2}q_{22}.
\end{equation*}%
Now, notice that $q_{ij}$, for $i,j=1,2$, is strictly causal and furthermore
by Applying Lemma \ref{lem:02}, $q_{i,j}\in \mathcal{S}$. This completes the
proof of ($1\Rightarrow 4$).

$4\Rightarrow 3$) Let $Q^{\varepsilon }=Q^{0}$ and $Z^{\varepsilon }=Z^{0}$.
Then (\ref{cond1}) holds for any $\varepsilon \in \lbrack 0,1)$.

$3\Rightarrow 2$) Immediate!

$2\Rightarrow 1$) Suppose there exit stable operators $Q^{\varepsilon }$ and 
$Z^{\varepsilon }$ and some $\varepsilon \in \lbrack 0,1)$ such that (\ref%
{cond1}) holds. Define the controller $K:\left[ x^{T},y^{T}\right]
^{T}\rightarrow u$ by%
\begin{equation*}
K=Z^{\varepsilon }\left( I+Q^{\varepsilon }\right) ^{-1}.
\end{equation*}%
Then, one can verify that the following identity holds%
\begin{equation*}
\left[ 
\begin{array}{cc}
I-\Lambda \bar{A}-\Lambda \bar{B}_{2}K_{1} & -\Lambda \bar{B}_{2}K_{2} \\ 
-\bar{C}_{2} & I%
\end{array}%
\right] =\left( I-\mathcal{E}_{Q^{\varepsilon },Z^{\varepsilon }}\right)
\left( I+Q^{\varepsilon }\right) ^{-1}.
\end{equation*}%
Therefore, the the closed-loop system (\ref{eq:cl'}) reduces to%
\begin{eqnarray*}
&&\left[ 
\begin{array}{c}
x \\ 
y%
\end{array}%
\right] =\left( I+Q^{\varepsilon }\right) \left( I-\mathcal{E}%
_{Q^{\varepsilon },Z^{\varepsilon }}\right) ^{-1}\left[ 
\begin{array}{c}
\Lambda \bar{B}_{1}w+\bar{x}_{0} \\ 
\bar{D}_{21}w%
\end{array}%
\right] \\
&&u=Z^{\varepsilon }\left( I-\mathcal{E}_{Q^{\varepsilon },Z^{\varepsilon
}}\right) ^{-1}\left[ 
\begin{array}{c}
\Lambda \bar{B}_{1}w+\bar{x}_{0} \\ 
\bar{D}_{21}w%
\end{array}%
\right] ,
\end{eqnarray*}%
which is a stable system since $Z^{\varepsilon }$ and $Q^{\varepsilon }$ are
stable operators and $\left( I-\mathcal{E}_{Q^{\varepsilon },Z^{\varepsilon
}}\right) ^{-1}\leq \frac{1}{1-\varepsilon }$. This completes the proof.

\subsection{Proof of Corollary \protect\ref{cor:01}}

The proof is identical to the first part of Theorem \ref{thm:state-feedback}%
. Suppose $K=\left[ 
\begin{array}{cc}
K_{1} & K_{2}%
\end{array}%
\right] $ is stabilizing. Then, $Q^{0}$ is defined as in (\ref{Q0}) and $%
Z^{0}$ in (\ref{Z0}) is given by%
\begin{equation*}
Z^{0}=\left[ 
\begin{array}{cc}
K_{1} & K_{2}%
\end{array}%
\right] \left( I+Q^{0}\right)
\end{equation*}%
or equivalently%
\begin{equation*}
\left[ 
\begin{array}{cc}
K_{1} & K_{2}%
\end{array}%
\right] =Z^{0}\left( I+Q^{0}\right) ^{-1},
\end{equation*}%
and this completes the proof.

\subsection{Proof of Theorem \protect\ref{thm:FI_real}}

According to Definition \ref{def:stab_ctrl}, in order for (\ref{eq:FI_real})
to be a stable realization, one needs to show that the interconnection of
noisy controller (\ref{eq:noisy_ctrl}) with the generalized plant results in
a stable system. The noisy controller is given by%
\begin{equation}
\bar{K}:\left\{ 
\begin{array}{c}
x_{K}=\mathcal{-}{\normalsize Q}x_{K}+\left[ 
\begin{array}{c}
x \\ 
y%
\end{array}%
\right] +n_{x} \\ 
u={\normalsize Z}x_{K}+{\normalsize n}_{u}%
\end{array}%
\right. ,  \label{eq:404}
\end{equation}%
where $n_{x}$ and $n_{u}$ stand for subcontrollers' communication noise. Then%
\begin{equation}
u=Z\left( I+Q\right) ^{-1}\left( \left[ 
\begin{array}{c}
x \\ 
y%
\end{array}%
\right] +n_{x}\right) +{\normalsize n}_{u}.  \label{eq:402}
\end{equation}%
The generalized plant (\ref{eq:plant_general}) driven by such control input
yields%
\begin{eqnarray}
\left[ 
\begin{array}{c}
x \\ 
y%
\end{array}%
\right] &=&\left( \left[ 
\begin{array}{cc}
\Lambda \bar{A} & 0 \\ 
\bar{C}_{2} & 0%
\end{array}%
\right] +\left[ 
\begin{array}{c}
\Lambda \bar{B}_{2} \\ 
0%
\end{array}%
\right] Z\left( I+Q\right) ^{-1}\right) \left[ 
\begin{array}{c}
x \\ 
y%
\end{array}%
\right]  \notag \\
&&+\left[ 
\begin{array}{c}
\Lambda \bar{B}_{2} \\ 
0%
\end{array}%
\right] Z\left( I+Q\right) ^{-1}n_{x}+\bar{n}.  \label{eq:401}
\end{eqnarray}%
where%
\begin{equation*}
\bar{n}=\left[ 
\begin{array}{c}
\Lambda \bar{B}_{1} \\ 
\bar{D}_{21}%
\end{array}%
\right] w+\left[ 
\begin{array}{c}
\Lambda \bar{B}_{2} \\ 
0%
\end{array}%
\right] n_{u}+\left[ 
\begin{array}{c}
I \\ 
0%
\end{array}%
\right] \bar{x}_{0},
\end{equation*}%
is a bounded signal. Simplifying (\ref{eq:401}), we obtain%
\begin{eqnarray}
\left[ 
\begin{array}{c}
x \\ 
y%
\end{array}%
\right] &=&\left( I+Q\right) \left( I-\mathcal{E}_{Q,Z}\right) ^{-1}\left[ 
\begin{array}{cc}
I-\Lambda \bar{A} & 0 \\ 
-\bar{C}_{2} & I%
\end{array}%
\right] n_{x}  \notag \\
&&-n_{x}+\left( I+Q\right) \left( I-\mathcal{E}_{Q,Z}\right) ^{-1}\bar{n}
\label{eq:403}
\end{eqnarray}%
where $\mathcal{E}_{Q,Z}$ is given by (\ref{eq:epsilon}). Noticing that $%
\left\Vert \mathcal{E}_{Q,Z}\right\Vert <1$, by (\ref{cond1}), guarantees
that $\left( I-\mathcal{E}_{Q,Z}\right) ^{-1}$ is a bounded operator.
Furthermore, since $\bar{n}$ and $n_{x}$ are bounded, the boundedness of $x$
and $y$ are guaranteed. Combining (\ref{eq:403}) with and (\ref{eq:404}),
the controller satisfies%
\begin{eqnarray*}
x_{K} &=&\left( I-\mathcal{E}_{Q,Z}\right) ^{-1}\left\{ \left[ 
\begin{array}{cc}
I-\Lambda \bar{A} & 0 \\ 
-\bar{C}_{2} & I%
\end{array}%
\right] n_{x}+\bar{n}\right\} , \\
u &=&Z\left( I-\mathcal{E}_{Q,Z}\right) ^{-1}\left\{ \left[ 
\begin{array}{cc}
I-\Lambda \bar{A} & 0 \\ 
-\bar{C}_{2} & I%
\end{array}%
\right] n_{x}+\bar{n}\right\} +n_{u},
\end{eqnarray*}%
and hence both the controller state $x_{K}$ and control input $u$ are
bounded signals. This completes the proof.

\bigskip

\subsection{Proof of Theorem \protect\ref{lem:100}}

The proof is immediate by noticing that an output feedback is a special case
of full-information controller $K=\left[ 
\begin{array}{cc}
K_{1} & K_{2}%
\end{array}%
\right] $ when $K_{1}=0$. Since any full-information controller can be
written as $K=Z\left( I+Q\right) ^{-1}$ where $Q$ and $Z$ satisfy any of the
equivalent conditions of Theorem \ref{thm:state-feedback}, any output
feedback can also be written as such while satisfying all of those
equivalent conditions. In addition to those conditions, however, output
feedback needs to satisfy $K_{1}=$ $Z\left( I+Q\right) ^{-1}\left[ 
\begin{array}{c}
I \\ 
0%
\end{array}%
\right] =0$.

\subsection{Proof of Theorem \protect\ref{thm:cond_output}}

We prove this theorem in the following order $1\Leftrightarrow 2\Rightarrow
4\Rightarrow 3\Rightarrow 2$

\textbf{Proof of }$1\Rightarrow 2$\textbf{)} Immediate by letting $K_{1}=0$
and $K_{2}=\tilde{K}$.

\textbf{Proof of }$2\Rightarrow 1$\textbf{)} Suppose $u=\left[ 
\begin{array}{cc}
K_{1} & K_{2}%
\end{array}%
\right] \left[ 
\begin{array}{c}
\hat{x} \\ 
y%
\end{array}%
\right] $ is stabilizing. Since the estimation $\hat{x}$ is given by (\ref%
{eq:estimation}), we have%
\begin{equation*}
u=K_{1}\hat{x}+K_{2}y=K_{1}E_{1}u+K_{1}E_{2}y+K_{2}y,
\end{equation*}%
or equivalently $u=\left( I-K_{1}E_{1}\right) ^{-1}\left(
K_{1}E_{2}+K_{2}\right) y$, if $\left( I-K_{1}E_{1}\right) $ is invertible.
We will prove later, in Theorem \ref{thm:SE}, that $E_{1}$ can be always
made strictly causal and hence, by Lemma \ref{lemma:inverse}, $\left(
I-K_{1}E_{1}\right) ^{-1}$ exists. Define $\tilde{K}=$ $\left(
I-K_{1}E_{1}\right) ^{-1}\left( K_{1}E_{2}+K_{2}\right) $. According to
Assumption \ref{assumption1} and Lemma \ref{lem:02}, $\left(
I-K_{1}E_{1}\right) ^{-1}\left( K_{1}E_{2}+K_{2}\right) \in \mathcal{S}$ and
this completes this part of the proof.

\textbf{Proof of }$2\Rightarrow 4$\textbf{) }Suppose $u=\left[ 
\begin{array}{cc}
K_{1} & K_{2}%
\end{array}%
\right] \left[ 
\begin{array}{c}
\hat{x} \\ 
y%
\end{array}%
\right] $ is stabilizing. We will show that there exists bounded operators $%
Q^{0}$ and $Z^{0}$ structured as in (\ref{eq:QZ}) such that (\ref{cond1})
holds for $\varepsilon =0$ and hence for any other $\varepsilon \in \lbrack
0,1)$.The control input can be also written as%
\begin{equation*}
u=\left[ 
\begin{array}{cc}
K_{1} & K_{2}%
\end{array}%
\right] \left[ 
\begin{array}{c}
x+e \\ 
y%
\end{array}%
\right] ,
\end{equation*}%
where $e=\hat{x}-x$ is a bounded signal. Then, the closed-loop system becomes%
\begin{eqnarray}
&&\left[ 
\begin{array}{c}
x \\ 
y%
\end{array}%
\right] =\left[ 
\begin{array}{cc}
I-\Lambda \bar{A}-\Lambda \bar{B}_{2}K_{1} & -\Lambda \bar{B}_{2}K_{2} \\ 
-\bar{C}_{2} & I%
\end{array}%
\right] ^{-1}\times  \notag \\
&&\left\{ \left[ 
\begin{array}{c}
\Lambda \bar{B}_{1} \\ 
\bar{D}_{21}%
\end{array}%
\right] w+\left[ 
\begin{array}{c}
I \\ 
0%
\end{array}%
\right] \bar{x}_{0}+\left[ 
\begin{array}{cc}
K_{1} & K_{2}%
\end{array}%
\right] \left[ 
\begin{array}{c}
e \\ 
0%
\end{array}%
\right] \right\} ,  \notag \\
u &=&\left[ 
\begin{array}{cc}
K_{1} & K_{2}%
\end{array}%
\right] \left[ 
\begin{array}{c}
x \\ 
y%
\end{array}%
\right] .
\end{eqnarray}%
According to Definition \ref{def:stab_central}, the mappings from $\bar{x}%
_{0}$ and $w$ to $x$, $y$, and $u$ need to be bounded. Making Assumption \ref%
{assumption3}, similarly to the proof of Theorem \ref{thm:state-feedback},
operators%
\begin{equation}
Q^{0}:=\left[ 
\begin{array}{cc}
I-\Lambda \bar{A}-\Lambda \bar{B}_{2}K_{1} & -\Lambda \bar{B}_{2}K_{2} \\ 
-\bar{C}_{2} & I%
\end{array}%
\right] ^{-1}-I,
\end{equation}%
and%
\begin{equation}
Z^{0}:=\left[ 
\begin{array}{cc}
K_{1} & K_{2}%
\end{array}%
\right] \left( I+Q^{0}\right) ,  \notag
\end{equation}%
are bounded. Furthermore, identical to the proof of Theorem \ref{thm:FI_real}%
, one can show that $Q=Q^{0}$ and $Z=Z^{0}$ satisfy $\mathcal{E}%
_{Q^{0},Z^{0}}=0$ and this completes the proof of this part.

\textbf{Proof of }$4\Rightarrow 3$\textbf{) }Immediate!

\textbf{Proof of }$3\Rightarrow 2$\textbf{) }We need to show that%
\begin{equation}
u=Z^{\varepsilon }\left( I+Q^{\varepsilon }\right) ^{-1}\left[ 
\begin{array}{c}
\hat{x} \\ 
y%
\end{array}%
\right]  \label{eq:406}
\end{equation}%
results in structured controller from $y$ to $u$ and stable closed-loop
system, where $Z^{\varepsilon }$ and $Q^{e}$ satisfy $\left\Vert \mathcal{E}%
_{Q^{\varepsilon },Z^{\varepsilon }}\right\Vert \leq \varepsilon $, for some 
$\varepsilon \in \lbrack 0,1)$, and structured as in(\ref{eq:QZ}).
Controller (\ref{eq:406}) can be rewritten as%
\begin{equation*}
u=Z^{\varepsilon }\left( I+Q^{\varepsilon }\right) ^{-1}\left[ 
\begin{array}{c}
x \\ 
y%
\end{array}%
\right] +Z^{\varepsilon }\left( I+Q^{\varepsilon }\right) ^{-1}\left[ 
\begin{array}{c}
e \\ 
0%
\end{array}%
\right] ,
\end{equation*}%
where $e=\hat{x}-x$ and $\left\Vert e\right\Vert \leq \delta $ for some $%
\delta \geq 0$. Then the closed loop system is given by%
\begin{eqnarray}
&&\left[ 
\begin{array}{c}
x \\ 
y%
\end{array}%
\right] =\left( I+Q^{\varepsilon }\right) \left( I-\mathcal{E}%
_{Q^{\varepsilon },Z^{\varepsilon }}\right) ^{-1}\times  \notag \\
&&\left\{ \left[ 
\begin{array}{c}
\Lambda \bar{B}_{1} \\ 
\bar{D}_{21}%
\end{array}%
\right] w+\left[ 
\begin{array}{c}
I \\ 
0%
\end{array}%
\right] \bar{x}_{0}+\left[ 
\begin{array}{c}
\Lambda \bar{B}_{2} \\ 
0%
\end{array}%
\right] Z^{\varepsilon }\left( I+Q^{\varepsilon }\right) ^{-1}\left[ 
\begin{array}{c}
e \\ 
0%
\end{array}%
\right] \right\} .  \label{eq:30}
\end{eqnarray}%
From the definition of $\mathcal{E}_{Q^{\varepsilon },Z^{\varepsilon }}$, we
have 
\begin{eqnarray*}
&&\left[ 
\begin{array}{c}
\Lambda \bar{B}_{2} \\ 
0%
\end{array}%
\right] Z^{\varepsilon }\left( I+Q^{\varepsilon }\right) ^{-1}=-\left( I-%
\mathcal{E}_{Q^{\varepsilon },Z^{\varepsilon }}\right) \left(
I+Q^{\varepsilon }\right) ^{-1} \\
&&+\left[ 
\begin{array}{cc}
I-\Lambda \bar{A} & 0 \\ 
-\bar{C}_{2} & I%
\end{array}%
\right] .
\end{eqnarray*}%
Therefore, (\ref{eq:30}) can be further simplified to%
\begin{eqnarray}
&&\left[ 
\begin{array}{c}
x \\ 
y%
\end{array}%
\right] =-\left[ 
\begin{array}{c}
e \\ 
0%
\end{array}%
\right] +\left( I+Q^{\varepsilon }\right) \left( I-\mathcal{E}%
_{Q^{\varepsilon },Z^{\varepsilon }}\right) ^{-1}\times  \notag \\
&&\left\{ \left[ 
\begin{array}{c}
\Lambda \bar{B}_{1} \\ 
\bar{D}_{21}%
\end{array}%
\right] w+\left[ 
\begin{array}{c}
I \\ 
0%
\end{array}%
\right] \bar{x}_{0}+\left[ 
\begin{array}{cc}
I-\Lambda \bar{A} & 0 \\ 
-\bar{C}_{2} & I%
\end{array}%
\right] \left[ 
\begin{array}{c}
e \\ 
0%
\end{array}%
\right] \right\} .  \label{eq:xy}
\end{eqnarray}%
From this expression, it is obvious that signals $x$ and $y$ remain bounded.
Furthermore, one can verify that 
\begin{equation}
u=Z^{\varepsilon }\left\{ \left[ 
\begin{array}{c}
\Lambda \bar{B}_{1} \\ 
\bar{D}_{21}%
\end{array}%
\right] w+\left[ 
\begin{array}{c}
I \\ 
0%
\end{array}%
\right] \bar{x}_{0}+\left[ 
\begin{array}{cc}
I-\Lambda \bar{A} & 0 \\ 
-\bar{C}_{2} & I%
\end{array}%
\right] \left[ 
\begin{array}{c}
e \\ 
0%
\end{array}%
\right] \right\} ,  \label{eq:u}
\end{equation}%
which implies $u$ is a bounded signal as well.

\subsection{Proof of Theorem \protect\ref{thm:SE}}

Suppose $K=\left[ 
\begin{array}{cc}
0 & K_{2}%
\end{array}%
\right] :\left[ 
\begin{array}{c}
x \\ 
y%
\end{array}%
\right] \rightarrow u$ is a centralized stabilizing output feedback with $%
K_{2}\in \mathcal{S}$. Therefore, the closed-loop maps $\left[ I-\Lambda 
\bar{A}-\Lambda \bar{B}_{2}K_{2}\bar{C}_{2}\right] ^{-1}$ and $\left[
I-\Lambda \bar{A}-\Lambda \bar{B}_{2}K_{2}\bar{C}_{2}\right] ^{-1}\Lambda 
\bar{B}_{2}K_{2}$ have to be a bounded operators. Define%
\begin{equation}
L:=\bar{B}_{2}K_{2}\in \mathcal{S},  \label{eq:460}
\end{equation}%
and bounded operators $Q_{L}^{0},Z_{L}^{0}\in \mathcal{S}$ 
\begin{eqnarray}
\Lambda Q_{L}^{0} &:&=\left[ I-\Lambda \bar{A}-\Lambda L\bar{C}_{2}\right]
^{-1}-I,  \label{eq:QL} \\
\Lambda Z_{L}^{0} &:&=\left[ I-\Lambda \bar{A}-\Lambda L\bar{C}_{2}\right]
^{-1}\Lambda \bar{B}_{2}K_{2}.  \notag
\end{eqnarray}%
Then it can be easily verified that%
\begin{equation*}
\Lambda L=\Lambda \bar{B}_{2}K_{2}=\left( I+\Lambda Q_{L}^{0}\right)
^{-1}\Lambda Z_{L}^{0}.
\end{equation*}%
Furthermore, from (\ref{eq:QL}), 
\begin{equation*}
\left( I+\Lambda Q_{L}^{0}\right) \left[ I-\Lambda \bar{A}-\Lambda L\bar{C}%
_{2}\right] =I,
\end{equation*}%
or equivalently $\Lambda Q_{L}^{0}\left( I-\Lambda \bar{A}\right) -\Lambda 
\bar{A}+\Lambda Z_{L}^{0}=0$. This proves that the observer gain as defined
in (\ref{eq:460}) satisfies (\ref{eq:606}) with $\varepsilon =0$. It remains
to show that such a state-estimator results in a bounded estimation error.
To this end, suppose there exists $Q_{L}^{\varepsilon }$ and $%
Z_{L}^{\varepsilon }$ such that (\ref{eq:606}) holds for some $\varepsilon $
and $L=$ $\left( I+Q_{L}^{\varepsilon }\Lambda \right)
^{-1}Z_{L}^{\varepsilon }$. Then, by adding and subtracting the term $%
\Lambda Ly$ to plant's state equation, we obtain%
\begin{equation*}
x=\Lambda \left( \bar{A}+L\bar{C}_{2}\right) x+\Lambda \left( \bar{B}_{11}+L%
\bar{D}_{21}\right) w+\Lambda \bar{B}_{2}u-\Lambda Ly+\bar{x}_{0},
\end{equation*}%
or equivalently%
\begin{equation}
x=R_{1}\left( \Lambda \left( \bar{B}_{11}+L\bar{D}_{21}\right) w+\bar{x}%
_{0}+\Lambda \bar{B}_{2}u-\Lambda Ly\right) ,  \label{eq:461}
\end{equation}%
where $R_{1}$ is defined in (\ref{eq:R1}). Also, define $R_{2}$ as in (\ref%
{eq:R2}). Then, from (\ref{eq:606}), we have%
\begin{equation*}
\left( I+\Lambda Q_{L}^{\varepsilon }\right) \bar{A}+Z_{L}^{\varepsilon }%
\bar{C}_{2}-\bar{Q}_{L}^{\varepsilon }=\mathcal{E}_{L},
\end{equation*}%
and it can be shown that%
\begin{equation*}
R_{1}=\left( I-\Lambda \mathcal{E}_{L}\right) ^{-1}\left( I+\Lambda
Q_{L}^{\varepsilon }\right) \in \mathcal{S},
\end{equation*}%
\begin{equation*}
R_{2}=\left( I-\Lambda \mathcal{E}_{L}\right) ^{-1}\Lambda
Z_{L}^{\varepsilon }\in \mathcal{S}{\normalsize ,}
\end{equation*}%
and hence both are structured and bounded. Furthermore, (\ref{eq:461}) can
be simplified to%
\begin{equation}
x=\left( R_{1}\Lambda \bar{B}_{11}+R_{2}\bar{D}_{21}\right) w+R_{1}\bar{x}%
_{0}+R_{1}\Lambda \bar{B}_{2}u-R_{2}y,  \label{eq:462}
\end{equation}%
and (\ref{LO}) is reduced to%
\begin{equation*}
\hat{x}=R_{1}\Lambda \bar{B}_{2}u-R_{2}y.
\end{equation*}%
Therefore, the estimation error is given by%
\begin{equation*}
e=\hat{x}-x=-\left( R_{1}\Lambda \bar{B}_{11}+R_{2}\bar{D}_{21}\right)
w-R_{1}\bar{x}_{0},
\end{equation*}%
which is the same as (\ref{eq:error}) and completes the proof.

\subsection{Proof of Theorem \protect\ref{thm:realization}}

We need to show that the noisy controller (\ref{eq:real_n}) results in a
stable closed-loop system. Assumed partitioned $n_{x}=\left[ n_{\hat{x}%
}^{T},n_{\xi }^{T}\right] ^{T}$. First, we will show that $\hat{x}$ entry of 
$x_{K}$ remains an estimation of plant states, $x$, with bounded error. From
(\ref{eq:real_n}), we obtain%
\begin{equation*}
\hat{x}=\Lambda \mathcal{E}_{L}\hat{x}+\left( I+\Lambda Q_{L}^{\varepsilon
}\right) \Lambda \bar{B}_{2}\left( u-n_{u}\right) -\Lambda
Z_{L}^{\varepsilon }y+n_{\hat{x}},
\end{equation*}%
where we used $u=Z\xi +n_{u}$. Then, given the plant dynamic (\ref{eq:462}),
the new estimator error, $e=\hat{x}-x$, is given by%
\begin{equation*}
e=e_{0}+\left( I-\Lambda \mathcal{E}_{L}\right) ^{-1}n_{\hat{x}}-R_{1}n_{u},
\end{equation*}%
where $e_{0}=-\left( R_{1}\Lambda \bar{B}_{11}+R_{2}\bar{D}_{21}\right)
w-R_{1}\bar{x}_{0}$. This implies that the estimation error remains bounded.
Next, we will show that the plant's states and output, i.e., signals $x$ and 
$y$, are bounded using the fact that the estimation error remains bounded.
To this end, from (\ref{eq:real_n}), the control input $u$ is given by%
\begin{eqnarray}
&&\xi =-Q\xi +\left[ 
\begin{array}{c}
\hat{x} \\ 
y%
\end{array}%
\right] +n_{\xi },  \label{eq:2010} \\
&&u=Z\xi +n_{u}.  \label{eq:2011}
\end{eqnarray}%
Consequently, using $\hat{x}=x+e$, 
\begin{eqnarray}
&&\xi =\left( I+Q\right) ^{-1}\left\{ \left[ 
\begin{array}{c}
x \\ 
y%
\end{array}%
\right] +n_{1}\right\} ,  \label{eq:2009} \\
&&u=Z\left( I+Q\right) ^{-1}\left[ 
\begin{array}{c}
x \\ 
y%
\end{array}%
\right] +Z\left( I+Q\right) ^{-1}n_{1}+Zn_{u},  \notag
\end{eqnarray}%
where $n_{1}=\left[ e^{T},0\right] ^{T}+n_{\xi }$. Then, the closed-loop
plant dynamics becomes%
\begin{eqnarray*}
&&\left[ 
\begin{array}{c}
x \\ 
y%
\end{array}%
\right] =\left\{ \left[ 
\begin{array}{cc}
\Lambda \bar{A} & 0 \\ 
\bar{C}_{2} & 0%
\end{array}%
\right] +\left[ 
\begin{array}{c}
\Lambda \bar{B}_{2} \\ 
0%
\end{array}%
\right] Z\left( I+Q\right) ^{-1}\right\} \left[ 
\begin{array}{c}
x \\ 
y%
\end{array}%
\right] \\
&&+\left[ 
\begin{array}{c}
\Lambda \bar{B}_{1}w+\bar{x}_{0}+\Lambda \bar{B}_{2}Zn_{u} \\ 
\bar{D}_{21}w%
\end{array}%
\right] +\left[ 
\begin{array}{c}
\Lambda \bar{B}_{2} \\ 
0%
\end{array}%
\right] Z\left( I+Q\right) ^{-1}\bar{n},
\end{eqnarray*}%
where $\bar{n}=n_{1}+\left( I+Q\right) n_{u}$. Using the definition of $%
\mathcal{E}_{Q,Z}$ to simplify, we obtain%
\begin{eqnarray*}
\left[ 
\begin{array}{c}
x \\ 
y%
\end{array}%
\right] &=&-\bar{n}+\left( I+Q\right) \left( I-\mathcal{E}_{Q,Z}\right)
^{-1}\times \\
&&\left\{ \left[ 
\begin{array}{c}
\Lambda \bar{B}_{1}w+\bar{x}_{0}+\Lambda \bar{B}_{2}Zn_{u} \\ 
\bar{D}_{21}w%
\end{array}%
\right] +\left[ 
\begin{array}{cc}
I-\Lambda \bar{A} & 0 \\ 
-\bar{C}_{2} & I%
\end{array}%
\right] \bar{n}\right\} .
\end{eqnarray*}%
The above expression implies that $x$ and $y$ are bounded signals.
Furthermore, from (\ref{eq:real_n}), the control input can be written as
simplified as follows: 
\begin{eqnarray*}
u &=&Z\left( I+Q\right) ^{-1}\left[ 
\begin{array}{c}
\hat{x} \\ 
y%
\end{array}%
\right] +Z\left( I+Q\right) ^{-1}n_{\xi }+Zn_{u} \\
&=&Z\left( I-\mathcal{E}_{Q,Z}\right) ^{-1}\times \\
&&\left\{ \left[ 
\begin{array}{cc}
I-\Lambda \bar{A} & 0 \\ 
-\bar{C}_{2} & I%
\end{array}%
\right] \bar{n}+\left[ 
\begin{array}{c}
I \\ 
0%
\end{array}%
\right] \bar{x}_{0}+\left[ 
\begin{array}{c}
\Lambda \bar{B}_{1} \\ 
\bar{D}_{21}%
\end{array}%
\right] w\right\} ,
\end{eqnarray*}%
which implies $u$ is also a bounded signal. Given that $x$, $y$, $u$, and $e$
are bounded signals, we will show that the states of the controller, $x_{K}$%
, are bounded. The states of the controller are $\hat{x}$ and $\xi $. Signal 
$\hat{x}$ is bounded since $\hat{x}=e+x$. Then, from the definition of $%
\mathcal{E}_{Q,Z}$, we have%
\begin{equation}
\mathcal{E}_{Q,Z}\xi =\left[ 
\begin{array}{cc}
\Lambda \bar{A} & 0 \\ 
\bar{C}_{2} & 0%
\end{array}%
\right] \left( I+Q\right) \xi +\left[ 
\begin{array}{c}
\Lambda \bar{B}_{2} \\ 
0%
\end{array}%
\right] Z\xi -Q\xi .  \label{eq:2001}
\end{equation}%
Combining (\ref{eq:2001}) with (\ref{eq:2009}) and (\ref{eq:2011}), we obtain%
\begin{equation}
\mathcal{E}_{Q,Z}\xi =\left[ 
\begin{array}{cc}
\Lambda \bar{A} & 0 \\ 
\bar{C}_{2} & 0%
\end{array}%
\right] \left\{ \left[ 
\begin{array}{c}
x \\ 
y%
\end{array}%
\right] +n_{1}\right\} +\left[ 
\begin{array}{c}
\Lambda \bar{B}_{2} \\ 
0%
\end{array}%
\right] \left( u-n_{u}\right) -Q\xi ,  \label{eq:2004}
\end{equation}%
Using the generalized plant, one can simplify (\ref{eq:2004}) to%
\begin{equation}
\mathcal{E}_{Q,Z}\xi =\left[ 
\begin{array}{c}
x \\ 
y%
\end{array}%
\right] -Q\xi +n_{2},  \label{eq:2012}
\end{equation}%
where%
\begin{equation*}
n_{2}=-\left[ 
\begin{array}{c}
\Lambda \bar{B}_{1}w+\bar{x}_{0} \\ 
\bar{D}_{21}w%
\end{array}%
\right] +\left[ 
\begin{array}{cc}
\Lambda \bar{A} & 0 \\ 
\bar{C}_{2} & 0%
\end{array}%
\right] n_{1}-\left[ 
\begin{array}{c}
\Lambda \bar{B}_{2} \\ 
0%
\end{array}%
\right] n_{u}.
\end{equation*}%
Again from (\ref{eq:2009}), we have $\xi =\left[ x^{T},y^{T}\right]
^{T}-Q\xi +n_{1}$, and right hand side of (\ref{eq:2012}) is simplified to
yield 
\begin{equation*}
\mathcal{E}_{Q,Z}\xi =\xi +n_{2}-n_{1},
\end{equation*}%
and consequently $\xi =\left( I-\mathcal{E}_{Q,Z}\right) ^{-1}\left(
n_{2}-n_{1}\right) $. Hence $\xi _{1}$ and $\xi _{2}$ are bounded signals
and the proof is complete.

\subsection{Proof of Proposition \protect\ref{prop:1}}

The proof is carried out by direct calculations. First, given $\left[
x^{T},y^{T}\right] ^{T}$ and $u$ as in (\ref{eq:xy})-(\ref{eq:u}), the
regulated output $z$ can be simplified to%
\begin{eqnarray}
&&z=\left\{ \left[ 
\begin{array}{cc}
\bar{C}_{1} & 0%
\end{array}%
\right] \left( I+Q\right) +\bar{D}_{12}Z\right\} \left( I-\mathcal{E}%
_{Q,Z}\right) ^{-1}\times  \notag \\
&&\left\{ \left[ 
\begin{array}{c}
\Lambda \bar{B}_{1} \\ 
\bar{D}_{21}%
\end{array}%
\right] w+\left[ 
\begin{array}{c}
\bar{x}_{0} \\ 
0%
\end{array}%
\right] +\left[ 
\begin{array}{cc}
I-\Lambda \bar{A} & 0 \\ 
-\bar{C}_{2} & I%
\end{array}%
\right] \left[ 
\begin{array}{c}
e \\ 
0%
\end{array}%
\right] \right\}  \label{eq:z} \\
&&+\bar{D}_{11}w-\left[ 
\begin{array}{cc}
\bar{C}_{1} & 0%
\end{array}%
\right] \left[ 
\begin{array}{c}
e \\ 
0%
\end{array}%
\right] .
\end{eqnarray}%
From Theorem \ref{thm:SE}, the estimation error is given by (\ref{eq:error}%
). Combining (\ref{eq:z}) and (\ref{eq:error}), we obtain $z=\Phi
_{wz}w+\Phi _{\bar{x}_{0}z}\bar{x}_{0}$, where%
\begin{eqnarray*}
\Phi _{wz} &=&\left\{ \left[ 
\begin{array}{cc}
\bar{C}_{1} & 0%
\end{array}%
\right] \left( I+Q\right) +\bar{D}_{12}Z\right\} \left( I-\mathcal{E}%
_{Q,Z}\right) ^{-1}\times \\
&&\left\{ \left[ 
\begin{array}{c}
\Lambda \bar{B}_{1} \\ 
\bar{D}_{21}%
\end{array}%
\right] +\left[ 
\begin{array}{c}
\Lambda \bar{A}-I \\ 
\bar{C}_{2}%
\end{array}%
\right] \left( R_{1}\Lambda \bar{B}_{1}+R_{2}\bar{D}_{21}\right) \right\} \\
&&+\bar{C}_{1}\left( R_{1}\Lambda \bar{B}_{1}+R_{2}\bar{D}_{21}\right) +\bar{%
D}_{11}.
\end{eqnarray*}%
Using identity $\left( I-\mathcal{E}\right) ^{-1}=I+\mathcal{E}\left( I-%
\mathcal{E}\right) ^{-1}$, and rearranging the terms, we obtain%
\begin{equation*}
\Phi _{wz}=H+U\left[ 
\begin{array}{c}
Q \\ 
Z%
\end{array}%
\right] V+U\left[ 
\begin{array}{c}
I+Q \\ 
Z%
\end{array}%
\right] \mathcal{E}\left( I-\mathcal{E}\right) ^{-1}V,
\end{equation*}%
where $H$, $U$, and $V$ are given in the theorem statement.

\subsection{Proof of Theorem \protect\ref{thm:synthesis}}

\subsubsection{\textbf{Proof of Upper Bound:}}

Pick $\rho _{1}\in \lbrack 0,1)$. By Theorem \ref{thm:cond_output}, any
stabilizing controller can be written as $K=Z\left( I+Q\right) ^{-1}$ with $%
\left\Vert \mathcal{E}_{Q,Z}\right\Vert \leq \varepsilon \leq \rho _{1}$.
Then, by Proposition \ref{prop:1}, 
\begin{equation*}
\left\Vert \Phi _{wz}\left( K\right) \right\Vert \leq \left\Vert H+U\left[ 
\begin{array}{c}
Q \\ 
Z%
\end{array}%
\right] V\right\Vert +\frac{\varepsilon }{1-\varepsilon }\left\Vert
U\right\Vert \left\Vert V\right\Vert \left\Vert \left[ 
\begin{array}{c}
I+Q \\ 
Z%
\end{array}%
\right] \right\Vert .
\end{equation*}%
Therefore, 
\begin{equation*}
\gamma ^{opt}\leq \inf_{Q,Z,\varepsilon }\left\Vert H+U\left[ 
\begin{array}{c}
Q \\ 
Z%
\end{array}%
\right] V\right\Vert +\frac{\varepsilon }{1-\varepsilon }\left\Vert
U\right\Vert \left\Vert V\right\Vert \left\Vert \left[ 
\begin{array}{c}
I+Q \\ 
Z%
\end{array}%
\right] \right\Vert .
\end{equation*}%
Restricting the set of $Q$ and $Z$ to those that satisfy (\ref{eq:1000}), we
obtain%
\begin{equation*}
\gamma ^{opt}\leq \inf_{Q,Z,\varepsilon }\left\Vert H+U\left[ 
\begin{array}{c}
Q \\ 
Z%
\end{array}%
\right] V\right\Vert +\frac{\varepsilon \rho _{2}}{1-\rho _{1}},
\end{equation*}%
where we used $\frac{\varepsilon }{1-\varepsilon }\leq \frac{\varepsilon }{%
1-\rho _{1}}$.

\subsubsection{\textbf{Proof of Lower Bound:}}

Given a small number $\delta >0$, let $K^{\ast }$ be a stabilizing
controller that results in a closed-loop performance in the $\delta $-ball
of optimal. By Theorem \ref{thm:cond_output}, there exists $Q^{\ast }$ and $%
Z^{\ast }$ such that $K^{\ast }=Z^{\ast }\left( I+Q^{\ast }\right) ^{-1}$,%
\begin{equation*}
\left[ 
\begin{array}{cc}
\Lambda \bar{A} & 0 \\ 
\bar{C}_{2} & 0%
\end{array}%
\right] +\left[ 
\begin{array}{cc}
\Lambda \bar{A}-I & 0 \\ 
\bar{C}_{2} & -I%
\end{array}%
\right] Q^{\ast }+\left[ 
\begin{array}{c}
\Lambda \bar{B}_{2} \\ 
0%
\end{array}%
\right] Z^{\ast }=0,
\end{equation*}%
\begin{equation*}
\Phi _{wz}\left( K^{\ast }\right) =\left\Vert H+U\left[ 
\begin{array}{c}
Q^{\ast } \\ 
Z^{\ast }%
\end{array}%
\right] V\right\Vert \leq \gamma ^{opt}+\delta .
\end{equation*}%
Given $\rho _{1},\rho _{2}>0,$ notice that $\left( Q^{\ast },Z^{\ast
}\right) $ is a feasible solution for (\ref{eq:1001}) and (\ref{eq:1002})
with $\varepsilon =0$. Therefore, we have%
\begin{equation*}
\gamma _{lower}\leq \left\Vert H+U\left[ 
\begin{array}{c}
Q^{\ast } \\ 
Z^{\ast }%
\end{array}%
\right] V\right\Vert .
\end{equation*}%
Hence, $\gamma _{lower}\leq \gamma ^{opt}+\delta $, for any arbitrary $%
\delta \geq 0$.

\subsection{Proof of Algorithm \protect\ref{alg1}}

First, we will show that the optimization in Step 2 provide a converging
upper bound on $\gamma ^{opt}$ as $\rho _{2}$ grows larger. The fact that $%
\gamma _{upper}^{m}$ is an upper bound on $\gamma ^{opt}$ comes directly
from Theorem \ref{thm:synthesis}. Now, given a small number $\delta >0$, let 
$K^{\ast }$ be a stabilizing controller that results in a closed-loop
performance in the $\delta $-ball of optimal. That is, there exists $Q^{\ast
}$ and $Z^{\ast }$ such that $K^{\ast }=Z^{\ast }\left( I+Q^{\ast }\right)
^{-1}$ such that $\Phi _{wz}\left( K^{\ast }\right) \leq \gamma
^{opt}+\delta $, where $Q^{\ast }$ and $Z^{\ast }$ satisfy $\mathcal{E=}%
{\normalsize 0}$ with $\mathcal{E}$ defined in (\ref{eq:epsilon}). Define, $%
m^{opt}$ to be an integer such that%
\begin{equation}
\left\Vert U\right\Vert \left\Vert \left[ 
\begin{array}{c}
I+Q^{\ast } \\ 
Z^{\ast }%
\end{array}%
\right] \right\Vert \left\Vert V\right\Vert \leq m^{opt}.  \label{eq:107}
\end{equation}%
Then, it is obvious that $\left( Q^{\ast },Z^{\ast }\right) $ together with $%
\varepsilon =0$ is a feasible solution for the optimization in Step $2$ if $%
\rho _{2}\geq m^{opt}$. Since $\delta $ was arbitrary, we have $\lim_{\rho
_{2}\rightarrow \infty }\gamma _{upper}^{\rho _{2}}=\gamma ^{opt}$. Next, we
will show that Step $3$ provides a converging lower bound. Pick $\delta >0$
arbitrary and $K^{\ast }$ as before. Note that $\left( Q^{\ast },Z^{\ast
}\right) $ together with $\varepsilon =0$ is a feasible solution to the
optimization in Step $3$, for any value of $\rho _{2}$. Therefore, we always
have%
\begin{eqnarray*}
\gamma _{lower} &=&\inf_{Q,Z,\varepsilon }\left\Vert H+U\left[ 
\begin{array}{c}
Q \\ 
Z%
\end{array}%
\right] V\right\Vert +\frac{\varepsilon \rho _{2}}{1-\rho _{1}} \\
&\leq &\left\Vert H+U\left[ 
\begin{array}{c}
Q^{\ast } \\ 
Z^{\ast }%
\end{array}%
\right] V\right\Vert +\frac{0\times \rho _{2}}{1-\bar{\rho}}=\gamma
^{opt}+\delta .
\end{eqnarray*}%
Therefore, $\gamma _{lower}^{\rho _{2}}\leq \gamma ^{opt}+\delta $, and
since, this holds for any arbitrary $\delta $ we must have $\gamma
_{lower}^{\rho _{2}}\leq \gamma ^{opt}$. To show that convergence of $\gamma
_{lower}^{\rho _{2}}$ to $\gamma _{upper}^{\rho _{2}}$ and consequently $%
\gamma ^{opt}$, we note that the only difference between the optimizations
in Step $2$ and $3$ is constraint (\ref{eq:1000}). This constraint becomes
inactive if $\rho _{2}\geq m^{opt}$ as given by (\ref{eq:107}). Therefore,
we have $\lim_{\rho _{2}\rightarrow \infty }\gamma _{lower}^{\rho
_{2}}=\lim_{\rho _{2}\rightarrow \infty }\gamma _{upper}^{\rho _{2}}=\gamma
^{opt}\ $and also $\gamma _{lower}^{\rho _{2}}=\gamma _{upper}^{\rho _{2}}$,
for $\rho _{2}\geq m^{opt}$.

\subsection{Proof of Lemma \protect\ref{lem:SE}}

Suppose an state-estimator in the form of (\ref{eq:411}) results in a
bounded estimation error, $e=\hat{x}-x$. Then $e=E_{1}u+E_{2}y-x$, is
bounded where $x=\left( I-\Lambda \bar{A}\right) ^{-1}\left[ \Lambda \bar{B}%
_{1}w+\Lambda \bar{B}_{2}u+\bar{x}_{0}\right] $ and $y=\bar{C}_{2}x+\bar{D}%
_{21}w$. Simplifying the error dynamics further, we obtain%
\begin{eqnarray*}
e &=&E_{1}u+\left( E_{2}\bar{C}_{2}-I\right) x+E_{2}\bar{D}_{21}w \\
&=&\left[ \left( E_{2}\bar{C}_{2}-I\right) \left( I-\Lambda \bar{A}\right)
^{-1}\Lambda \bar{B}_{1}+E_{2}\bar{D}_{21}\right] w \\
&&+\left[ \left( E_{2}\bar{C}_{2}-I\right) \left( I-\Lambda \bar{A}\right)
^{-1}\Lambda \bar{B}_{2}+E_{1}\right] u \\
&&+\left( E_{2}\bar{C}_{2}-I\right) \left( I-\Lambda \bar{A}\right) ^{-1}%
\bar{x}_{0}.
\end{eqnarray*}%
For the error to be a bounded signal, the operators $\left( E_{2}\bar{C}%
_{2}-I\right) \left( I-\Lambda \bar{A}\right) ^{-1}$ and $E_{2}$ must be
bounded. Define%
\begin{equation}
Q_{L}=-\left( E_{2}\bar{C}_{2}-I\right) \left( I-\Lambda \bar{A}\right)
^{-1}-I,Z_{L}=-E_{2},  \label{eq:470}
\end{equation}%
and pick $E_{1}=\left( I+Q_{L}\right) \Lambda \bar{B}_{2}$. Then, from (\ref%
{eq:470}), we have $\left( I+Q_{L}\right) \Lambda \bar{A}+Z_{L}\bar{C}%
_{2}-Q_{L}=0$, and the estimator and error dynamics read%
\begin{eqnarray*}
\hat{x} &=&\left( I+Q_{L}\right) \Lambda \bar{B}_{2}u+Z_{L}y, \\
e &=&-\left[ \left( I+Q_{L}\right) \Lambda \bar{B}_{1}+Z_{L}\bar{D}_{21}%
\right] w-\left( I+Q_{L}\right) \bar{x}_{0}.
\end{eqnarray*}%
The proof of converse is similar to the proof of Theorem \ref{thm:SE}.

\bibliographystyle{IEEEtran}
\bibliography{BibFile}
\vskip 0pt plus -10fil \vspace{-1 cm} 
\begin{IEEEbiographynophoto}{Mohammad Naghnaeian}
received a Ph.D. degree, in 2016, from the University of Illinois, Urbana-Champaign, IL, USA. He is currently an assistant professor at the Department of Mechanical Engineering, Clemson University. His research interests include robust and distributed control and estimation, linear switched systems, positive systems, the security of cyber-physical systems, adaptive control, and biological systems.
\end{IEEEbiographynophoto}
\begin{IEEEbiographynophoto}{Petros G. Voulgaris}
received the Diploma in Mechanical Engineering from the National Technical University, Athens, Greece, in 1986, and the S.M. and Ph.D. degrees in Aeronautics and Astronautics from the Massachusetts Institute of Technology, Cambridge, in 1988 and 1991, respectively. Since 1991, he has been with the Department of Aerospace Engineering, University of Illinois at Urbana Champaign, where he is currently a Professor. He also holds joint appointments with the Coordinated Science Laboratory, and the department of Electrical and Computer Engineering at the same university. His research interests include robust and optimal control and estimation, communications and control, networks and control, and applications of advanced control methods to engineering practice including flight control, nano-scale control, robotics, and structural control systems. Dr. Voulgaris is a recipient of the National Science Foundation Research Initiation Award (1993), the Office of Naval Research Young Investigator Award (1995) and the UIUC Xerox Award for research. He has been an Associate Editor for the IEEE Transactions on Automatic Control and the ASME Journal of Dynamic Systems, Measurement and Control. He is also a Fellow of IEEE.
\end{IEEEbiographynophoto}
\begin{IEEEbiographynophoto}{Nicola Elia} received the Laurea degree in Electrical
Engineering from the Politecnico of Turin,
Turin, Italy, in 1987, and the Ph.D. degree in
Electrical Engineering and Computer Science from
the Massachusetts Institute of Technology,
Cambridge MA,, in 1996. He is a Fellow of IEEE and the Vincentine Hermes-Luh Chair of Electrical and Computer Engineering at University of Minnesota, Minneapolis. His research interests include computational methods for controller design, communication systems with access to feedback, control with communication constraints, and networked systems. 
\end{IEEEbiographynophoto}

\end{document}